\numberwithin{equation}{section}
\DeclareFontFamily{OT1}{rsfs}{}
\DeclareFontShape{OT1}{rsfs}{n}{it}{<-> rsfs10}{}
\DeclareMathAlphabet{\mathscr}{OT1}{rsfs}{n}{it}
\theoremstyle{plain}
\newtheorem{theorem}{Theorem}[section]
\newtheorem{proposition}[theorem]{Proposition}
\newtheorem{lemma}[theorem]{Lemma}
\newtheorem{corollary}[theorem]{Corollary}
\newtheorem{conjecture}[theorem]{Conjecture}
\theoremstyle{definition}
\begin{document}

\title[Maximum Growth Factor in Gaussian Elimination]{Some New Results on the Maximum Growth Factor in Gaussian Elimination}
\author{Alan Edelman}
\address{Department of Mathematics, Massachusetts Institute of Technology, Cambridge, MA, 02139 USA.
}
\email{edelman@mit.edu}
\author{John Urschel}
\address{Department of Mathematics, Massachusetts Institute of Technology, Cambridge, MA, 02139 USA.
}
\email{urschel@mit.edu}
\subjclass[2020]{Primary 65F05, 15A23.}
\thanks{ {\rm Accompanying software and data may be found in the \href{https://github.com/alanedelman/CompletePivotingGrowth}{online repository.}}}

\begin{abstract}
This paper combines modern numerical computation with theoretical results to improve our understanding of the
growth factor problem for Gaussian elimination.  On the computational side we obtain lower bounds for the maximum
growth for complete pivoting for $n=1:75$  and $n=100$ using the Julia JuMP optimization package.  At $n=100$
we obtain a growth factor bigger than $3n$. 
The numerical evidence suggests that the maximum growth factor is bigger than $n$ if and only if $n \ge 11$.

We also present a number of theoretical results. We show that the maximum growth factor over matrices with entries restricted to a subset of the reals is nearly equal to the maximum growth factor over all real matrices.
We also show that the growth factors under floating point arithmetic and exact arithmetic
are nearly identical. Finally, through numerical search, and stability and extrapolation results, we provide
improved lower bounds for the maximum growth factor. Specifically, we find that the largest growth factor is bigger than $1.0045n$ for $n>10$,
and the lim sup of the ratio with $n$ is greater than or equal to $3.317$.  
In contrast to the old conjecture that growth might never  be bigger than $n$, it seems likely that the maximum
growth divided by $n$ goes to infinity as $n \rightarrow \infty$.

\end{abstract}

\maketitle

\section{Introduction}\label{sec:intro}

We begin with a sketch of the history of the subject. For an introduction to the technical background of the field, and a more technical discussion of related work, see Subsections \ref{sub:technical} and \ref{subs:relatedtech}, respectively.

\subsection{History of Complete Pivoting (Overview) }
Understanding  and bounding the growth factor for Gaussian elimination has 
intrigued mathematical numerical linear algebraists for many decades.  It is one of those
beautiful problems that is quite easy to state, and yet we still know so very little,
especially in the context of complete pivoting.  This may seem even more surprising
as the problem has been around for more than six decades.

\noindent {\bf The 1960s-1970s:}
In 1961, Wilkinson \cite[Equation 4.15]{wilkinson1961error} published a famous
bound  for the growth factor for complete pivoting which was always considered pessimistic  (see Eq.\ (\ref{eqn:wilk}) of this paper).  In that same paper he writes 
``no matrix has been encountered for which [the growth factor] was as large as 8."

In 1964, Leonard Tornheim wrote 
in a technical paper  \cite{tornheim1964maximum}
``there is a conjecture, attributed
to J.H. Wilkinson, that" the growth factor for complete pivoting would
be at most $n$.    This is the first written record of this
conjecture (known to be false since 1991) as far as we are aware.
Leonard Tornheim's work is notable in that it was not published in any
journals, but rather as technical papers and Chevron research reports during a time
when Chevron had a major presence in Richmond, CA, some 20 miles from UC Berkeley.\footnote{Tornheim was a rather active mathematician. In a March 2023 phone call, the first author contacted
Tornheim's son who described how Tornheim wrote a computer program
that made Chevron ten million dollars, thus establishing in his words
``what a mathematician could be good for" (in private industry).
He also mentioned that his father had moved to the east coast and as 
coincidence would have it, he 
lived directly across the street
in Brookline, MA but sadly, the first author did not
know this and  recognizes what a lost opportunity this might have been.
}
One year later,
in 1965, Wilkinson 
published the famous book {\it The Algebraic Eigenvalue Problem}
\cite[p.213]{Wilkinson1965AEP} in which he wrote that no matrix had yet been discovered with growth greater than $n$
for an $n \times n$ matrix.
In that same year a cover letter written by Eugene B. Reid\footnote{In March 2023, the first author telephoned the son of Eugene B. Reid who described his father
as having bought the first commercial computer on the West Coast of the US,
and that his father was not a mathematician or a statistician, but 
he was responsible for the computer, mathematical, and statistical activities
at Chevron.  He also bought Chevron's very first computer which his son
claims might have been the first commercial computer on the West Coast
of the US.
}
for a report by Tornheim \cite{tornheim1965maximum}
writes (without any reference) that it was a  {\em widely known conjecture}
that growth would always be less than or equal to $n$.
In 1968, Cryer  \cite{cryer1968pivot} specifically references Wilkinson's 1965 words
as a conjecture,
though  Wilkinson 
never explicitly put to print a formal conjecture. 
Nonetheless Cryer specifically wrote in his abstract ``It has been conjectured by Wilkinson\ldots".   A possible best guess is that through the rumor grape vine often known as the children's game of ``broken telephone," the observation that none
was ever seen morphed into a folk conjecture
and thus we will attribute this conjecture not to Wilkinson (who did not write it down) nor Cryer (who wrote it later), but to folklore.\footnote{
In another March 2023 phone call, the first author spoke to William Kahan who
seemed rather certain that Wilkinson had never stated the conjecture
in so many words.  Kahan described how computers were slow enough
at the time that Wilkinson would watch the bits go by and look
for large growth, and growth $>n$ was never observed.
Nonetheless, we can not completely rule out that Wilkinson may have
verbally stated or at least hinted at the conjecture.
In fact, Cleve Moler stated publicly on
August 16, 2023 in Oxford, UK that Wilkinson
was not inclined to use terms such as
``theorem," ``proof,"
and ``conjecture,"
but Moler felt
that Wilkinson had believed the conjecture.
}

The 1960s saw the maximum growth computed for $n=1,2,3,4$  and bounded for $n=5$
by the growth chasers Tornheim  \cite{tornheim1964maximum,tornheim1965maximum,
tornheim1969maximum,tornheim1970maximum}, Cryer
\cite{cryer1968pivot}, and Cohen \cite{cohen1974note}. Hadamard matrices
(matrices with all entries $\pm1$ and orthogonal columns)
were shown
by Tornheim \cite{tornheim1964maximum}  and Cryer \cite{cryer1968pivot}   to have last pivot $n$.  
It became natural to wonder whether Hadamard matrices could be a counterexample
to the Wilkinson conjecture.

\noindent {\bf The 1980s:} After a bit of a lull, in 1988 Day and Peterson wrote a lovely article in
 the American Math Monthly \cite{day1988growth}  which revisits $n=3$, studies Hadamard matrices,
 and notably is the first to explore the growth problem for complete pivoting with numerical optimization software,
 specifically the NPSOL Library out of Stanford (Nonlinear Programming, Stanford Optimization Laboratory).
In particular they were the first to observe the number  $4.1325$ when $n=5$ as an output of the optimization software.
In 1989,  Higham and Higham \cite{higham1989large} pointed out that many common matrices can have 
growth factors of order $n$ (for any pivoting strategy).

 \noindent {\bf The early 1990s:} Interest in the growth factor
was substantially rejuvenated when
Trefethen and Schreiber  \cite{trefethen1990average} performed average case analyses
of the growth factor in 1990.  One year later, Nick Gould  \cite{gould1991growth} surprised everyone by  finding a 13x13 matrix with growth
bigger than 13 in finite precision using his LANCELOT software.   The solution was confirmed to be near a true example in 
exact arithmetic in 1992 \cite{edelman1992complete}.

\noindent{\bf 1993-Present:} In the over 30 years since, there was no progress whatsoever in improving Gould's numbers
for complete pivoting
through computation  (which would raise a lower bound) or lowering any mathematical upper bounds.
This is a testament to the difficulty of the problem.

\subsection{Other pivoting analyses}

\hfill\\

\noindent{\bf No Pivoting:}
In 2006,
the celebrated smoothed analysis\footnote{ Incidentally  ``smoothed analysis" was named by the first author in his car while driving Dan Spielman and Shanghua Teng in Cambridge, MA.} of Sankar, Spielman, and Teng \cite{sankar2006smoothed} showed that
large growth is unlikely from a probabilistic perturbative viewpoint with no pivoting, and pointed out that such an analysis could be possible for partial and complete
pivoting.

\noindent{\bf Partial Pivoting:} 
In 1994, Foster \cite{Foster:1994:GEP} pointed out that practical problems can bump into the unacceptable $2^{n-1}$ bound
for partial pivoting.  The first author remarked in \cite{edelman1995complete} that numerical experiments suggested in contrast
to  \cite{trefethen1990average} that the growth might be more like $O(n^{1/2})$ than $O(n^{2/3})$ on average. In addition to the smoothed analysis for no pivoting, Sankar \cite{sankar2004smoothed} also performed a smoothed analysis of partial pivoting with sub-exponential bounds.
Very recently  Huang and Tikhomirov \cite{huang2022average}  obtained new results exploring the average case analysis for partial pivoting.

\noindent{\bf Complete Pivoting for Hadamard Matrices:}  
It remains unknown, though perhaps it seems unlikely,  that a Hadamard matrix could have an earlier pivot bigger than $n$, given that the last
three pivots can only  be $n/2,n/2$ and $n$, and the fourth from the end is at most $n/2$.
Nonetheless, complete pivot patterns for Hadamard matrices remain a fascinating topic of research.
A comprehensive review of the topic including new progress written in 2013 by Kravvaritis may be found in \cite{kravvaritis2013complete}.
Of note are the investigations by Seberry \cite{seberry} and also \cite{edelman1998counterexample, edelman1995complete}. We note that the growth factor for a Hadamard matrix of dimension $n \le 16$ is known to be $n$ under complete pivoting.

\subsection{Technical Background}\label{sub:technical}

The solution of a linear system, i.e., given a matrix $A$ and vector $b$, finding a vector $x$ satisfying $Ax = b$, is one of the oldest problems in mathematics. Gaussian elimination, a technique in which a matrix is factored into the product of a lower and upper triangular matrix, is one of the most fundamental and important techniques for solving linear systems. The algorithm proceeds by converting $A$ into upper triangular form through row operations. In particular, given an $n \times n$ matrix $A = (a_{i,j})$, Gaussian elimination performs the iteration
\begin{align*}
    a_{i,j}^{(1)}&:= a_{i,j} \qquad \qquad \qquad \quad \text{for} \quad i,j = 1,...,n, \\
    a^{(k+1)}_{i,j} &:= a^{(k)}_{i,j} - \frac{a^{(k)}_{i,k} a^{(k)}_{k,j}}{a^{(k)}_{k,k}} \qquad \text{for} \quad   i,j = k,...,n, \; k = 1,...,n-1.
\end{align*}
This can be equivalently written as successive rank one updates of sub-matrices of $A$, i.e.,
$$ A^{(k+1)} := A^{(k)}_{k+1:n,k+1:n} - \frac{1}{a_{k,k}^{(k)}}\, A^{(k)}_{k+1:n,k} \, A^{(k)}_{k,k+1:n} \qquad \text{for} \quad k = 1,...,n-1,$$
where $A^{(k)} = (a_{i,j}^{(k)})_{i,j\ge k}$ and $A_{i_1:i_2,j_1:j_2}$ is defined as the sub-matrix of $A$ containing only rows $\{i_1,...,i_2\}$ and columns $\{j_1,...,j_2\}$. The resulting LU factorization of $A$ is given by
$$L(i,j) = \frac{a_{i,j}^{(j)}}{a_{j,j}^{(j)}} \quad \text{for} \quad i \ge j, \quad \text{and} \quad U(i,j) = a_{i,j}^{(i)} \quad \text{for} \quad j \ge i,$$
and this factorization is unique (up to scaling, i.e., $A =(LD)(D^{-1}U)$ for any invertible diagonal matrix $D$). Not all matrices have an LU factorization (issues arise if $a^{(k)}_{k,k} = 0$ for some $k<n$), and may require a permutation of the rows (or, equivalently, columns) of the matrix in order for such a factorization to exist. In addition, when computations are performed in finite precision, issues due to round-off error can occur. The backward error due to rounding in Gaussian elimination can be estimated by the number of bits of precision, the condition number of the matrix $A$, and the growth factor of the Gaussian elimination algorithm (see \cite[Theorem 2.6]{isaacson2012analysis} or \cite[Theorem 9.5]{higham2002accuracy} for details). For this reason, understanding the growth factor under different permutation strategies is of both theoretical and practical importance. Using exact arithmetic, the growth factor of Gaussian elimination is defined as
$$g(A):= \frac{\max_{i,j,k} |a_{i,j}^{(k)}|}{\max_{i,j} |a_{i,j}|}.$$
When performing Gaussian elimination in finite precision, say, using only numbers that can be represented in base $\beta$ with a length $t$ mantissa, the algorithm suffers from round-off error, and the growth factor in this setting may be larger than $g(A)$. However, as we will see in Section \ref{sec:float}, when $t = \omega(\log_\beta^{2} n)$, the maximum growth factors in exact and floating point arithmetic are nearly identical (up to a $1-o(1)$ multiplicative factor) under complete pivoting (see Theorem \ref{thm:float}). For this reason, we focus almost exclusively (save for Section \ref{sec:float}) on exact arithmetic. The most popular and well-studied methods for permuting a matrix in Gaussian elimination are partial pivoting (requiring $|a_{i,k}^{(k)}| \le |a_{k,k}^{(k)}|$), complete pivoting (requiring $|a_{i,j}^{(k)}| \le |a_{k,k}^{(k)}|$), and the slightly less well-known rook pivoting (requiring $|a_{i,k}^{(k)}|,|a_{k,j}^{(k)}|  \le |a_{k,k}^{(k)}|$). The growth factor for partial pivoting is well understood in the worst case, and so, in this work, we primarily focus on complete pivoting and, to some extent, rook pivoting as well.

Let $\textbf{GL}_n(\mathbb{C})$ be the set of $n \times n$ non-singular complex matrices. For simplicity, when considering a given pivoting strategy, we simply restrict ourselves to the set of matrices that satisfy the constraints of the pivoting procedure without requiring pivoting. In particular, we define
\begin{align*}
    \mathbf{PP}_n(S) &= \{ A \in \textbf{GL}_n(\mathbb{C}) \cap S^{n \times n} \, | \, |a_{i,k}^{(k)}| \le |a_{k,k}^{(k)}| \text{ for all } i \ge k \}, \\
    \mathbf{CP}_n(S) &= \{ A \in \textbf{GL}_n(\mathbb{C}) \cap S^{n \times n} \, | \, |a_{i,j}^{(k)}| \le |a_{k,k}^{(k)}| \text{ for all } i,j \ge k \}, \\
    \mathbf{RP}_n(S) &= \{ A \in \textbf{GL}_n(\mathbb{C}) \cap S^{n \times n} \, | \, |a_{i,k}^{(k)}|,|a_{k,j}^{(k)}| \le |a_{k,k}^{(k)}| \text{ for all } i,j \ge k\},
\end{align*}
where $S$ is some arbitrary subset of $\mathbb{C}$ (typically $\mathbb{R}$ or $\mathbb{C}$). For instance, when performing Gaussian elimination with complete pivoting on a matrix in $\mathbf{CP}_n(S)$, no pivoting is required. Furthermore, when performing Gaussian elimination with complete pivoting on a matrix in $\textbf{GL}_n(\mathbb{C}) \cap S^{n \times n}$, the resulting permuted matrix is in $\mathbf{CP}_n(S)$. We also stress that the role of the set $S$ is to constrain only the entries of the input matrix $A$; the sub-matrices $A^{(k)}$, $k >1$, during Gaussian elimination need not have entries in $S$ (though if $S = \mathbb{R}$, this will of course be the case). We denote the supremum of the growth factor for a set $\mathbf{X} \subset \mathbb{C}^{n \times n}$ by $g\big[\mathbf{X}\big]$, e.g., $g\big[\mathbf{CP}_n(\{0,1\})\big]$ is the maximum growth factor of a non-singular $n \times n$ binary matrix under complete pivoting. For all sets $\mathbf{X}$ under consideration in this work, this supremum is a maximum. In figures and tables, we use $g_n$ as shorthand for $g\big[\mathbf{CP}_n(\mathbb{R})\big]$.

\subsection{Related Work}
\label{subs:relatedtech}
The maximum growth factor for partial pivoting is well understood. This quantity is known to be exactly $2^{n-1}$ for $n \times n$ complex matrices,
achieved by Wilkinson's famous example matrix  \cite[p.212]{Wilkinson1965AEP} (see \cite{higham1989large} for all such real matrices).
For complete pivoting, much less is known. A classical result, due to Wilkinson, bounds the growth factor using only Hadamard's inequality \cite[Equation 4.15]{wilkinson1961error}, and produces the estimate
\begin{equation}\label{eqn:wilk}
g\big[\mathbf{CP}_n(\mathbb{C})\big] \le \sqrt{n} \big( 2 \; 3^{1/2} \; ... \; n^{1/(n-1)} \big)^{1/2} \le 2 \sqrt{n} \, n^{\ln (n)/4}.
\end{equation}
Minor improvements to this estimate are possible using the inexactness of Hadamard's inequality, but to date no non-trivial improvement (say, in the exponential constant) is known, even when restricted to real numbers. This estimate has historically been considered quite pessimistic; it was thought that the growth factor for real matrices under complete pivoting is at most $n$: 
\begin{conjecture}[Folklore?\footnote{See discussion in Section 1.1 regarding attribution.}]\label{conj:cryer}
$g\big[\mathbf{CP}_n(\mathbb{R})\big] \le n$,
with equality achieved only by Hadamard matrices.
\end{conjecture}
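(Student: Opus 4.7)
The plan is to reduce the problem to estimating the successive pivots via their expression as ratios of principal minors, and then to bound those minors sharply using the structural constraints that complete pivoting imposes. After normalizing so that $\max_{i,j}|a_{i,j}|=1$, complete pivoting preserves this normalization at every stage \emph{except} through the pivots themselves, so $g\big[\mathbf{CP}_n(\mathbb{R})\big]=\max_k |a_{k,k}^{(k)}|$. Using $a_{k,k}^{(k)}=\det A_{1:k,1:k}/\det A_{1:k-1,1:k-1}$, I would rephrase the conjecture as a uniform bound $|\det A_{1:k,1:k}| \le k\cdot|\det A_{1:k-1,1:k-1}|$ for all $k \le n$.

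The first natural attempt is to apply Hadamard's inequality to the numerator. Since every entry of the $k \times k$ leading block has modulus at most $1$, each column has $\ell^2$-norm at most $\sqrt{k}$, yielding $|\det A_{1:k,1:k}| \le k^{k/2}$. This alone reproduces the Wilkinson bound (\ref{eqn:wilk}) and is far from $n$. The essential next step is to exploit complete pivoting to lower-bound the denominator: one would want to show that in a complete-pivoting normalized matrix, the leading $(k-1)\times(k-1)$ block is ``nearly Hadamard'' in the sense that $|\det A_{1:k-1,1:k-1}| \ge (k-1)^{(k-1)/2}/k^{(k-1)/2}$ or similar. Iterating such a ratio estimate down from $k=n$ would collapse the double-logarithmic factor in Wilkinson's bound to the linear rate claimed.

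For the equality case, I would run the inequality chain backwards: if $|a_{n,n}^{(n)}|=n$ then every Hadamard-type inequality used along the way must be saturated, forcing each leading block to have pairwise orthogonal columns of equal norm. Combined with the magnitude-one constraint from the normalization and the sign constraints from complete pivoting, this should force $A$ to be Hadamard. The reverse direction, that every Hadamard matrix does attain last pivot $n$, is already in Tornheim \cite{tornheim1964maximum} and Cryer \cite{cryer1968pivot}.

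The hard part is unmistakably the second step: one needs a structural lower bound on leading principal minors that is tight to within a constant of Hadamard's upper bound, and no such bound is known (as the excerpt notes, no non-trivial improvement over Wilkinson's exponent is known even over $\mathbb{R}$). In fact the historical record recounted in Section 1.1 shows that Gould's $13\times 13$ example already refutes the conjecture, so any proof along the template above must fail at precisely this step — presumably because complete pivoting does not actually force enough orthogonality on the early blocks to drive the minors up to within the required constant. Accordingly, my expectation is that the proof strategy can at best be pushed to some small $n$ (say $n \le 10$, consistent with the numerical evidence in the abstract), and that the obstruction to going further is genuine rather than technical.
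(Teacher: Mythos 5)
The statement you were asked about is a \emph{conjecture}, not a theorem, and the paper neither proves it nor attempts to. Quite the opposite: it is the historical conjecture the paper is in the business of refuting. Gould's 1991 numerical $13\times13$ example and Edelman's 1992 exact-arithmetic confirmation already falsified it, and the paper's Theorem \ref{thm:cp_growth} strengthens this to $g\big[\mathbf{CP}_n(\mathbb{R})\big] \ge 1.0045\,n$ for \emph{every} $n>10$. So there is no paper proof against which to compare your attempt; any blind attempt to prove Conjecture \ref{conj:cryer} is an attempt to prove a false statement, and you correctly recognized that.

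Given that, your analysis of \emph{why} the natural approach must fail is largely sound. Your step (1) --- Hadamard's inequality on the leading $k\times k$ block --- is exactly how Wilkinson's bound (\ref{eqn:wilk}) is derived, and your step (2) --- a Hadamard-matching lower bound on $|\det A_{1:k-1,1:k-1}|$ forced by complete pivoting --- is precisely the missing structural input that cannot exist in general, else the conjecture would be a theorem. Your diagnosis that complete pivoting ``does not actually force enough orthogonality on the early blocks'' is corroborated by the paper's data: the optimizers found at $n=12$ and $n=16$ are not Hadamard (Table \ref{tab:Hadamard}), and their determinant and pivot trajectories behave qualitatively unlike the Hadamard curve (Figure \ref{fig:growth}). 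One calibration worth flagging: your closing remark that the strategy ``can at best be pushed to $n\le 10$'' conflates two things. The paper's numerics suggest $g\big[\mathbf{CP}_n(\mathbb{R})\big]<n$ for $5\le n\le 10$, but that is itself only conjectural (only $n\le 4$ are settled); and even at those small $n$ the leading minors of the numerically optimal matrices are not near Hadamard-sized, so the intermediate lemma of your step (2) already fails there. If the conjecture is true for $n\le 10$, it is not for the minor-ratio reason your outline would supply.
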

The complex analogue of this conjecture is clearly not true, as illustrated by the dimension three example \cite{tornheim1965maximum,tornheim1970maximum}
$$A = \begin{pmatrix}1 & 1 & 1 \\ 1 & z & z^{-1} \\ 1 & z^{-1} & z \end{pmatrix},$$
which, when $z = \big(-1 + 2 \sqrt{2} i\big)/3$, has growth factor $16/(3 \sqrt{3}) \approx 3.07$. As noted by Higham, Conjecture \ref{conj:cryer} was one of the most famous conjectures in numerical analysis \cite{higham2002accuracy}. Attempting to bound or numerically compute the growth factor for small values of $n$ was a reasonably active area of research. For instance, there are a number of proofs that the maximum third and fourth pivots are $2.25$ and $4$, respectively (see Cryer \cite{cryer1968pivot}, Tornheim \cite{tornheim1964maximum, tornheim1965maximum,tornheim1969maximum, tornheim1970maximum}, Cohen \cite{cohen1974note}, and Day and Peterson \cite{day1988growth}). Tornheim also showed that the maximum fifth pivot is bounded above by $4 \,\tfrac{17}{18}$ \cite{tornheim1969maximum,tornheim1970maximum}. Conjecture \ref{conj:cryer} was eventually shown to be false in dimension $13$ by Gould in IEEE double precision floating point arithmetic \cite{gould1991growth}, and soon after by Edelman in exact arithmetic \cite{edelman1992complete}. Since these results, very little progress has been made on the asymptotic behavior of the maximum growth factor under complete pivoting or the exact values of growth for small choices of $n$.

Rook pivoting is relatively understudied compared to partial and complete pivoting, despite, in some sense, containing the best characteristics of both methods. In practice, the expected number of comparisons required should be roughly the same order of computation as partial pivoting, see \cite{foster1997growth,poole2000rook} for empirical results and theorems of this type for certain restrictive classes of random matrices. In addition, rook pivoting has a quasi-polynomial upper bound on the maximal growth factor of 
\begin{equation}\label{eqn:rook_bound}
   g\big[\mathbf{RP}_n(\mathbb{C})\big] \le \frac{3}{2} \, n^{3\ln(n)/4},
\end{equation}
as shown by Foster \cite{foster1997growth}. Similar to complete pivoting, the gap between worst-case constructions and upper bounds is quite large.

The growth factor has also been studied in a variety of other contexts. Trefethen and Schreiber studied the average growth factor over some distributions and numerical observed that for complete pivoting the growth factor appeared to exhibit a $n^{1/2}$ type behavior \cite{trefethen1990average}. Higham and Higham have given numerous examples of matrices from practical applications with order $n$ growth factor \cite{higham1989large}, and recently produced a class of random matrices with growth of order $n/ \log n$ \cite{higham2021random} (both for any pivoting strategy). Sankar, Spielman, and Teng provided a smoothed analysis of growth factor without pivoting, proving that if a matrix is perturbed, it unlikely to have large growth factor \cite{sankar2006smoothed} (in Sankar's thesis, the more complicated case of partial pivoting was also considered \cite{sankar2004smoothed}). Recently Huang and Tikhomirov obtained new results exploring the average case analysis for partial pivoting \cite{huang2022average}. Parker proved that, using random butterfly matrices, any non-singular matrix can be transformed into one that does not require pivoting \cite{parker1995random}; Peca-Medlin and Trogdon further analyzed the benefits of butterfly matrices for a variety of pivoting strategies in \cite{peca2022growth}. Townsend produced bounds for the growth factor when non-optimal pivots are used \cite{townsend2016gaussian}.

\subsection{ Contributions of this paper}
In this work, we prove a number of results regarding the maximum growth factor under complete pivoting, strengthen various conjectures, 
provide strong evidence for some results, and perform extensive numerical computations.

\begin{table}[t]
\caption {\label{tab:goulddata} {In 1991, Gould \cite{gould1991growth} presented a Table 3.1:
Maximum Growth Factors Encountered.  We thought it would be of interest
to present the maximum growth factors we encountered  over 30 years later side by side.
The {\bf \color{blue} blue} number 13.0205 is Gould's 1991 first surprising example
of a matrix with $g(A)>n$.  The {\bf \color{red} red} numbers show that
it is possible to find examples even when $n=11$ and $12$. The {\bf \color{red} red} and {\bf \color{magenta} magenta} numbers
are improvements over previously computed results. Only the bold face {\bf   black} numbers are known to equal $g_n$ exactly.
}}

\begin{tabular}{cc}
\begin{tabular}{|r|l|c|l|l}
\multicolumn{3}{c}{$g_n$ Known Exactly} \\
\cline{1-3}
1 & \multicolumn{ 2}{c|}{\bf  1}   \\
2 & \multicolumn{2}{c|} {\bf  2}   \\
3 &\multicolumn{2}{c|}   {\bf 2.25}  \\
4 &  \multicolumn{2}{c|}{\bf  4}   \\
\cline{1-3}
\multicolumn{3}{c}{}
\\ 
\multicolumn{3}{c}{Ours $=$ same  as \cite{day1988growth,gould1991growth}} \\
\cline{1-3}
5 & \multicolumn{2}{c|}{4.1325}   \\
6 & \multicolumn{2}{c|}{5}    \\
8 & \multicolumn{2}{c|}{8}   \\
\cline{1-3}
\multicolumn{3}{c}{}
\\
\multicolumn{3}{c}{\hspace{.3in} Ours /  Gould \cite{gould1991private}} \\
\cline{1-3}
18 & {\bf \color{magenta} 21.25}  &  20.45 \\
20 & {\bf \color{magenta} 24.71}   & 24.25 \\
25 & {\bf \color{magenta} 33.67}   & 32.99 \\ \cline{1-3}
\multicolumn{3}{c}{}
\\ 
\end{tabular} 
 & 
 \begin{tabular}{|r|l|c|l|l} \\[-.6in]
 \multicolumn{3}{c}{\hspace{.3in} Ours /  Gould  \cite{gould1991growth}} \\
\cline{1-3} 
7 &  6.05 & 6 \\
9 & 8.69 & 8.4305 \\
10 &  9.96 & 9.5294 \\
{\bf \color{red}  11} &  {\bf  \color{red} 11.05 } & 10.4627 \\ 
{\bf \color{red} 12} &  {\bf  \color{red} 12.55 } & 12 \\
13 &  {\bf \color{magenta}  13.76}  &  {\bf \color{blue} 13.0205} \\  
14 &   {\bf \color{magenta} 15.25} & 14.5949 \\
15 &  {\bf \color{magenta}  16.92 } &  16.1078  \\
16 &   {\bf \color{magenta}  18.46 } & 18.0596 \\ 
\cline{1-3}
\multicolumn{3}{c} {\bf \color{red} Conjecture: $g_n
>n $ iff $n >10$} \\
\multicolumn{3}{c}{ \bf \color{blue} 13.0205 = 1991 surprise} 
\\
\multicolumn{3}{c}{}
\\[0in]
\multicolumn{3}{c}{$\! \! \!  \!  \!  \!  \!  \hspace*{-.6in}  \longleftarrow$ as documented in \cite{edelman1992complete} }
\end{tabular}
\end{tabular}

\end{table}

Through numerical search, and stability and extrapolation results, we provide improved lower bounds for the maximum growth factor:

\begin{theorem}\label{thm:cp_growth}
$g\big[\mathbf{CP}_n(\mathbb{R})\big] \ge 1.0045\, n$ for all $n > 10$, and $\limsup_n \big(g\big[\mathbf{CP}_n(\mathbb{R})\big] /n\big) \ge 3.317$.
\end{theorem}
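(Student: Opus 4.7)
The plan is to derive both assertions from the same three ingredients that the paper has been building toward: a large-scale numerical search, a stability/perturbation lemma that promotes approximate numerical solutions to rigorous lower bounds, and an extrapolation construction that propagates a favorable ratio at a single dimension to all larger dimensions.

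First I would use the JuMP-based nonconvex optimization described in the introduction to produce, for each $n$ in some finite test range (e.g.\ $11 \le n \le 75$ and $n = 100$), a near-optimal matrix $A_n \in \mathbf{CP}_n(\mathbb{R})$ together with a numerical value for $g(A_n)$. The outputs are only approximately feasible, but any $A$ with strictly satisfied constraints and growth $g$ lies in an open neighborhood of $\mathbf{CP}_n(\mathbb{R})$ on which growth depends continuously, so rounding to a nearby algebraic or rational matrix certifies a rigorous lower bound of $g(A_n) - o(1)$ on $g\big[\mathbf{CP}_n(\mathbb{R})\big]$. Reading off the $n = 100$ example, whose growth exceeds $3.317 \cdot 100$, yields $\limsup_n g_n/n \ge 3.317$ immediately.

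For the uniform bound $g_n \ge 1.0045\, n$ at every $n > 10$, the certified numerical lower bounds handle all $n$ up to the top of the test range directly: for each such $n$ the value $g(A_n)/n$ already exceeds $1.0045$. To cover every larger $n$ I would invoke an extrapolation construction that, starting from any $A \in \mathbf{CP}_{n_0}(\mathbb{R})$ with growth $g$, builds matrices in $\mathbf{CP}_{n_0 + k}(\mathbb{R})$ for each $k \ge 0$ with growth at least $(g/n_0)(n_0 + k) - O(1)$. Choosing $n_0$ within the numerical range so that $g(A_{n_0})/n_0$ exceeds $1.0045$ with enough room to absorb the $O(1)$ correction finishes the argument. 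The construction itself embeds $A$ as the trailing $n_0 \times n_0$ Schur complement of an $(n_0 + k) \times (n_0 + k)$ matrix whose first $k$ preliminary pivots are engineered to be selected by complete pivoting and to contribute no new maximum-modulus entry to the elimination history.

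The main obstacle is precisely this extrapolation step. A naive block-diagonal extension such as $A \oplus I_k$ preserves $g$ but dilutes $g/n$, since the denominator grows while the numerator does not. One must instead scale the prefix entries so that complete pivoting selects them in the desired order (ruling out competition from entries of $A$) while simultaneously ensuring the peak entry of the entire elimination trajectory remains inside the embedded copy of $A$ so that the denominator of $g$ is unchanged. Verifying this compatibility at each of the $k$ preliminary steps, and quantifying the small loss that appears as the $O(1)$ correction, is where the delicate combinatorial and analytic work lies: the global-maximum nature of complete pivoting makes the construction brittle to the precise choice of prefix, and any misalignment in the pivot order would collapse the bound.
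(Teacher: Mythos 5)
Your high-level architecture — numerical optimization, a stability/rounding step to promote floating-point solutions to certified exact lower bounds, and then an extrapolation to larger dimensions — matches the paper. But the two concrete steps you describe each have a genuine gap, and in both cases the paper's solution is simpler and cleaner than what you sketch.

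For the $\limsup$ assertion: a single certified data point at $n=100$ with $g_{100}/100 > 3.317$ does \emph{not} yield the $\limsup$ bound ``immediately.'' A $\limsup$ lower bound requires the ratio $g_n/n$ to be at least $3.317$ along infinitely many $n$, and neither monotonicity of $g_n$ (which is about the numerator, not the ratio) nor the definition of $\limsup$ gives that from one data point. The paper closes this by proving $g\big[\mathbf{CP}_{2n}(S)\big] \ge 2\,g\big[\mathbf{CP}_n(S)\big]$: tensoring any $A \in \mathbf{CP}_n(S)$ with the $2\times 2$ Hadamard matrix $\left(\begin{smallmatrix}1&1\\ 1&-1\end{smallmatrix}\right)$ preserves the complete-pivoting property and doubles the growth factor. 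Iterating gives $g_{100\cdot 2^j}/(100\cdot 2^j) \ge g_{100}/100$ for all $j$, hence the $\limsup$. Your proposed ``trailing Schur complement with engineered prefix'' construction \emph{would} rescue this, but you have not proved it, and it is much harder than the one-line Kronecker argument.

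For the uniform bound $g_n \ge 1.0045\,n$ at all $n > 10$: the extrapolation you propose is not carried out — you call it ``brittle'' and ``the main obstacle'' yourself — and it would indeed require carefully proving that complete pivoting selects the engineered prefix entries at every step, which is delicate because complete pivoting is a global condition. The paper avoids all of this. It uses two trivial operations: (i) padding with a $1$ in the upper-left corner, giving $g_{n+1} \ge g_n$, and (ii) the Kronecker doubling above, giving $g_{2n} \ge 2g_n$. From these, if $g_n \ge Cn$ on the dyadic window $n \in [k, 2k)$, then on $[2k,4k)$ the even case follows with no loss and the odd case $n' = 2n+1$ uses monotonicity, incurring a factor $\tfrac{2n}{2n+1} \ge \tfrac{2k}{2k+1}$. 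Iterating over dyadic windows gives $g_n \ge Cn\prod_{i\ge 1}\bigl(1 - \tfrac{1}{2^i k}\bigr)$ for all $n \ge k$. Applying this with $k = 14$ and $C = 1.08$ (and checking $n = 11,12,13$ directly from the table) yields the constant $1.0045$. In short, the paper's extrapolation lemma reduces the problem to two elementary matrix operations and bookkeeping over dyadic scales; your proposal replaces this with an unproven and substantially harder pivot-ordering construction.
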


This is the first proof that Conjecture \ref{conj:cryer} is false for all $n >10$, and also the first proof that illustrates a multiplicative gap away from $n$.

In addition, we also provide asymptotic lower bounds for rook pivoting. By noting that the set of rook pivoted matrices are closed under Kronecker products, we convert finite results into lower bounds for the exponent of the growth factor, showing that rook pivoting can exhibit super-linear growth:

\begin{theorem}\label{thm:rp_growth}
$g\big[\mathbf{RP}_{n}(\mathbb{R})\big] > \frac{1}{641}n^{1.669}$ for all $n \in \mathbb{N}$.
\end{theorem}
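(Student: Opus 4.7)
The approach is amplification via Kronecker products of a finite-dimensional example. The key technical ingredient is a Kronecker stability lemma for rook pivoting: if $A \in \mathbf{RP}_m(\mathbb{R})$ and $B \in \mathbf{RP}_n(\mathbb{R})$, then $A \otimes B \in \mathbf{RP}_{mn}(\mathbb{R})$ and $g(A \otimes B) = g(A)\, g(B)$. To prove the lemma, I would order the rows and columns of $A \otimes B$ so that $B$'s index varies fastest, which exhibits $A \otimes B$ as an $m \times m$ block matrix with $(i,k)$-block $a_{i,k} B$. A direct block computation then shows that after each completed round of $n$ elimination steps the Schur complement is exactly $A^{(j+1)} \otimes B$, while within the $(j{+}1)$-st round the partial Schur complement at substep $\ell$ has its active $(n-\ell) \times (n-\ell)$ leading block equal to $a^{(j+1)}_{j+1,j+1} \cdot B^{(\ell+1)}$, with off-diagonal blocks of an analogous scaled form. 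The rook inequalities for $A$ and $B$ then combine multiplicatively to give the rook inequality for $A \otimes B$ at every intermediate pivot, and the maximum entry of every Schur complement factors as the product of one entry of a Schur complement of $A$ with one of $B$; both multiplicativity of the growth factor and closure under $\otimes$ follow.

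Given this lemma, the remainder is a packaging argument. Using the paper's numerical framework, one obtains an explicit matrix $B_0 \in \mathbf{RP}_m(\mathbb{R})$ of moderate dimension $m$ with growth factor $g_0$ satisfying $\log_m g_0 \ge 1.669$ and $g_0 \le 641$; these two quantities determine the exponent and constant in the theorem. For arbitrary $n \in \mathbb{N}$, set $k = \lfloor \log_m n \rfloor$, so $m^k \le n$. Iterating the Kronecker lemma yields $B_0^{\otimes k} \in \mathbf{RP}_{m^k}(\mathbb{R})$ with growth factor $g_0^k$. Pad block-diagonally: for $\epsilon$ with $0 < \epsilon \le \max_{i,j}\bigl|(B_0^{\otimes k})_{i,j}\bigr|$, the matrix $\widetilde B_n := B_0^{\otimes k} \oplus \epsilon I_{n - m^k}$ lies in $\mathbf{RP}_n(\mathbb{R})$ (the zero off-diagonal blocks leave all rook inequalities of $B_0^{\otimes k}$ intact, and the identity block trivially satisfies its own rook condition) and retains growth factor $g_0^k$.

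Combining these pieces, using $k \ge \log_m n - 1$ and $g_0 > 1$,
\[
g\big[\mathbf{RP}_n(\mathbb{R})\big] \;\ge\; g_0^k \;\ge\; g_0^{\log_m n - 1} \;=\; \frac{n^{\log_m g_0}}{g_0} \;\ge\; \frac{n^{1.669}}{641},
\]
which is the claimed bound. (For the finitely many very small $n$ where $\lfloor \log_m n \rfloor = 0$, the bound $g\big[\mathbf{RP}_n(\mathbb{R})\big] \ge 1$ already exceeds $n^{1.669}/641$, since $641^{1/1.669} > n$.)

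The main obstacle is verifying the Kronecker stability lemma. At the end of each elimination round the Schur complement is cleanly a Kronecker product, but within a round the off-diagonal ``bottom-right'' blocks undergo updates that do not factor as a scalar times $B$ at every substep, and one must track them by an explicit telescoping identity in order to confirm the rook inequality at every intermediate pivot (not merely at round boundaries) and to bound the intermediate entries by a product of entries from Schur complements of $A$ and of $B$. The bookkeeping is elementary but requires care. Once this lemma is in place, the numerical identification of the seed $B_0$, the padding construction, and the arithmetic above are all short.
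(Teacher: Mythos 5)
Your proof is correct and follows essentially the same route as the paper: a numerically certified seed $B_0 \in \mathbf{RP}_{48}(\mathbb{R})$ with growth exceeding $640.48 > 48^{1.669}$, closure of rook pivoting under Kronecker products (the paper's Lemma~\ref{prop:extrap_rp}(ii)), and padding by an identity block to cover non-power dimensions (Lemma~\ref{prop:extrap_rp}(i)). The only cosmetic divergences are that you assert the stronger equality $g(A\otimes B) = g(A)g(B)$ where the paper proves and needs only the inequality $g\big[\mathbf{RP}_{mn}\big] \ge g\big[\mathbf{RP}_m\big]\, g\big[\mathbf{RP}_n\big]$ via tracking the pivots $a^{(j)}_{j,j} b^{(\ell)}_{\ell,\ell}$ (exact equality is delicate for rook pivoting since the maximal intermediate entry of $A$ need not be a pivot), and that you pad with $\epsilon I$ in the bottom-right rather than the paper's single $1$ in the top-left; both choices work.
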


Numerical search is a key ingredient in the proofs of both Theorems \ref{thm:cp_growth} and \ref{thm:rp_growth}, and our numerical results also provide insights beyond the aforementioned theorems, which we briefly summarize through the following figures and tables:
\begin{itemize}
\item Table \ref{tab:goulddata} shows improvements compared to previously known data.
\item Table \ref{tab:Hadamard} outlines the implications of our results for low order Hadamard matrices.
\item Table \ref{tab:cp_r} tabulates our numerical results for every $n=1:75$ and also $n=100$.
\item Figure \ref{fig:alln} plots the numerical values from Table 3.
\end{itemize}
The reported numerical computations were performed 
in Julia using the modern JuMP (Julia for Mathematical Programming) \cite{DunningHuchetteLubin2017} package. We note that, when $n = 52$ we have found a matrix for which  the growth factor is greater than $2n$, and at $n=100$ the growth factor is well above $3n$. We also found a matrix for which the growth factor with rook pivoting is $640$ at $n = 48$. We discuss our methodology for the computation of these results and state a pair of natural conjectures in Subsection \ref{sub:max_growth}.

\begin{table}[t]
\caption {\label{tab:Hadamard}  Hadamard matrices: For decades, Hadamard matrices, interesting in their own right,
seemed relevant to the growth factor problem.  Gould \cite{gould1991growth} shattered
that notion with his computation for $n=16$.  We observed that the notion can already
be shattered partially at $n=8$ and fully at $n=12$:}
\begin{tabular}{ll}
$n=4$: & Mathematics shows $g_4=4$ and the optimum is Hadamard \\
$n=8$: &  $g_8=8$ remains a conjecture, however one new observation \\
& is that the matrix need not be Hadamard. \\
$n=12$: & We are the first to report a 12x12 matrix with $g_{12}>12$, thus\\
&  showing Hadamard matrices do not maximize growth for $n = 12$. \\
$n=16$: & Gould reported the discovery of a 16x16 matrix with $g_{16}>16$,  \\
&  therefore Hadamard matrices do not maximize growth for $n = 16$. \\ & We observed a slightly more optimal matrix. 
\end{tabular}
\end{table}

\begin{table}[t]
\caption {GECP Data computed by JuMP for matrices of dimensions $n=1:75$ and $100$ in exact arithmetic.} \label{tab:cp_r}
\begin{tabular}{| l | r |  @{\hskip .2in}   | l | r | @{\hskip .2in} | l | r |@{\hskip .2in} | l | r |@{\hskip .2in} | l | r |}  \hline
$n=$ & $g \ge \ \downarrow   $ & $n=$ &  $g \ge \ \downarrow   $ & $n=$ &  $g \ge \ \downarrow   $ & $n=$ &  $g \ge \ \downarrow   $& $n=$ &  $g \ge \ \downarrow   $ \\  \hline
 $\;1$ &  $1$ & $16$ & $18.46$ & $31$  & $45.43$ & $46$ & $85.85$&  $61$  & $137.55$ \\
 $\;2$ & $2$ & $17$ & $19.86$  & $32$ & $47.74$ & $47$ & $87.54$&  $62$  & $141.83$  \\
 $\;3$ & $9/4$ & $18$ & $21.25$ &$33$ & $50.36$  & $48$ & $91.44$& $63$ & $144.72$ \\
 $\;4$ & $4$ & $19$ & $22.85$ & $34$ & $52.78$  & $49$ & $94.72$ &$64$  &$148.05$\\
 $\;5$ & $4.13$ & $20$ & $24.71$ & $35$ & $54.84$  & $50$ & $97.24$& $65$ & $153.98$  \\ \hline
 $\;6$ & $5$ & $21$ & $26.21$ & $36$ & $57.66$&  $51$  & $101.82$ & $66$ & $157.05$ \\
 $\;7$ & $6.05$ & $22$ & $28.01$ & $37$ & $59.91$& $52$  & $104.61$   & $67$ & $162.20$\\
 $\;8$ & $8$ &$23$ & $29.72$& $38$ & $63.18$ &  $53$ & $108.09$  & $68$ & $166.89$ \\
 $\;9$ & $8.69$ & $24$ & $31.63$&  $39$ & $64.87$ & $54$ &$111.19$  & $69$ & $171.33$ \\
 $10$ & $9.96$ &$25$ & $33.67$& $40$ & $67.52$ & $55$ & $114.76$  & $70$ & $174.45$\\ \hline
$11$ & $11.05$ &  $26$ & $34.96$ & $41$  & $70.44$& $56$ & $118.18$& $71$  & $182.98$ \\
$12$ & $12.55$ & $27$ & $36.88$& $42$ & $73.49$& $57$ & $121.90$  & $72$  & $184.91$ \\
$13$ & $13.76$ & $28$ & $39.05$& $43$ & $77.68$ & $58$ & $126.23$  & $73$ & $190.57$\\
$14$ & $15.25$ & $29$ & $41.46$ & $44$ &$79.25$ & $59$ & $129.42$   & $74$  &$193.28$\\
$15$ & $16.92$ & $30$ & $43.40$ & $45$ & $82.56$ & $60$ & $134.27$  & $75$ & $196.79$  \\  \hline
\end{tabular}
\vspace{-.1in}
$$\hspace*{4.6in} \vdots \vspace{-.07in}$$

\hspace*{4.6in} \begin{tabular}{|l|r|}
\hline
$100 \;$ & $331.71$ \\ \hline
\end{tabular}
\end{table}

We also outline our more theoretical results:

We show that the maximum growth factor over matrices with entries restricted to a subset of $\mathbb{R}$ is nearly equal to the maximum growth factor over all real matrices

\begin{theorem}[Simplified Version of Theorem \ref{thm:restrict}]\label{thm:inform_restrict}
For any $S \subset \mathbb{R}$, $g\big[\mathbf{CP}_{14n^2}(S)\big] \ge \big(\emph{diam}(S)/[2\max (S)] \big)\, g\big[\mathbf{CP}_{n}(\mathbb{R})\big]$ for all $n \in \mathbb{N}$.
\end{theorem}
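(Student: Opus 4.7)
The plan is to prove the inequality by taking a near-optimal $A \in \mathbf{CP}_n(\mathbb{R})$ with $\max_{i,j}|a_{ij}|=1$ and growth factor within $\epsilon$ of $g\big[\mathbf{CP}_n(\mathbb{R})\big]$, and then exhibiting an explicit $B \in S^{14n^2 \times 14n^2}$ whose Gaussian elimination under complete pivoting carries out, in its last $n$ steps, a scaled copy of the elimination of $A$. Up to a global sign flip we may assume $M := \max(S) > 0$, and writing $d := \emph{diam}(S)$, every element of $S$ lies in $[M-d, M]$. The target scaling is $(d/2)A$: its maximum entry is $d/2$ and its growth factor equals that of $A$. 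Thus, if the last $n \times n$ Schur complement of $B$ equals $(d/2)A$ and the first $14n^2 - n$ pivots of $B$ never exceed $M$ in absolute value, then $B$ attains growth at least $(d/2) g / M = (d/(2M)) g$, which after sending $\epsilon \to 0$ gives the claim.

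To build $B$, I would use a block decomposition
\[
B = \begin{pmatrix} U & V \\ W & Z \end{pmatrix},
\]
where $U$ is $(14n^2-n) \times (14n^2-n)$ and is chosen so that complete pivoting processes $U$ first, leaving the Schur complement $Z - WU^{-1}V = (d/2)A$. The building blocks of the gadget use pairs of elements $M$ and $M-d$ from $S$: inserting an $M$ next to an $M-d$ in a pivot column produces, after one elimination step against a row whose entries are all $M$, a Schur contribution of $-d$ versus $0$. By arranging such paired rows and columns across the $14n^2 - n$ gadget dimensions, one can realize any prescribed entry in $[-d/2, d/2]$ in the final Schur complement. When $S$ is discrete and the required values are not exactly in $\{s - s' : s, s' \in S\}$, one first quantizes the entries of $A$ to a grid that is realizable from $S$ and then absorbs the resulting approximation error into $\epsilon$ by continuity of the growth factor with respect to entrywise perturbations.

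The verification has three parts. (iii) Subsequent elimination on $(d/2)A$ follows the same pivot sequence as on $A$, attaining growth $(d/2)g$: this is immediate since multiplication by a nonzero scalar preserves growth factor and pivot order. (ii) The residual $n \times n$ matrix after the gadget phase equals $(d/2)A$: this reduces to the explicit gadget design and a direct Schur-complement computation. (i) During the first $14n^2-n$ steps, complete pivoting selects the gadget pivots in the intended order, and all intermediate Schur complements have entries bounded by $M$ in absolute value.

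Part (i) is the main obstacle, and it is what forces the $14n^2$ bound. The gadget must be arranged so that no transient entry produced during the gadget phase exceeds $M$ in absolute value, for otherwise complete pivoting would deviate from the planned order and the embedded copy of $(d/2)A$ would be destroyed. Ensuring this pivot hierarchy is exactly what blows the size up: each original row of $A$ is expanded by roughly $14n$ auxiliary rows whose sole purpose is to carry out the affine shift from $S \subset [M-d,M]$ into the target range $[-d/2, d/2]$ while keeping intermediate entries uniformly $\le M$, with the specific constant $14$ emerging from the detailed bookkeeping needed to synchronize the sign-selection pairs with the row/column scaling.
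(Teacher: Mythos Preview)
Your high-level strategy---embed a scaled copy of a near-optimal $A\in\mathbf{CP}_n(\mathbb{R})$ as the trailing Schur complement of a large matrix $B\in S^{m\times m}$---is the right one, and it is also the paper's. But the proposal has two genuine gaps that the paper spends most of Sections~2 and~3 closing, and which you have not addressed.

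First, your appeal to ``continuity of the growth factor with respect to entrywise perturbations'' is exactly where the argument breaks. A maximizer $A$ of $g[\mathbf{CP}_n(\mathbb{R})]$ typically lies on the boundary of the complete-pivoting region (some inequalities $|a_{i,j}^{(k)}|\le|a_{k,k}^{(k)}|$ are tight), so an arbitrarily small quantization of its entries can violate the CP constraints; once that happens the pivot order changes and the growth factor can drop discontinuously. The paper handles this with a nontrivial stability lemma (Lemma~\ref{lm:cp_eps_stable}): it constructively pushes $A$ to a nearby matrix in $\mathbf{CP}_n^{\bm\delta}$ with $\delta_k<0$ (strict interior with a prescribed margin), with the same last pivot. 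Only after this can one safely round entries. Without this step, your quantize-and-absorb move is unjustified.

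Second, you do not quantify how fine the quantization must be, nor do you supply the gadget. For finite $S$ (say $\{0,1\}$) the achievable Schur-complement values after $k$ auxiliary steps form a discrete set of mesh roughly $2^{-k}$, so to approximate $A$ well enough you need exponentially small error; the paper shows (via Lemma~\ref{lm:cp_eps_cover} and Wilkinson's bound) that error $2^{-3n}$ suffices, and then builds an explicit binary-expansion gadget: a one-step reduction from $S$ to $\{0,1\}$ (this is where the factor $\mathrm{diam}(S)/\max_{s\in S}|s|$ appears), a three-step reduction to $\{0,1/2,1\}$, and a block matrix of size $3n^2+n$ whose elimination computes $R_0-\tfrac12 R_1-\cdots-2^{-3n}R_{3n}$. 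The constant in $m=4n(3n+1)+1$ (hence the $14n^2$ in the simplified statement) comes from this explicit construction, together with the verification at each stage that all intermediate entries stay within the CP bound. Your claim that ``the specific constant $14$ emerg[es] from the detailed bookkeeping'' is not a proof of parts~(i) and~(ii); that bookkeeping \emph{is} the theorem.
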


\definecolor{myyellow}{rgb}{1, .8, 0}

\begin{figure}[t]
    \centering
    \subfigure[]{\includegraphics[width=.49\linewidth]{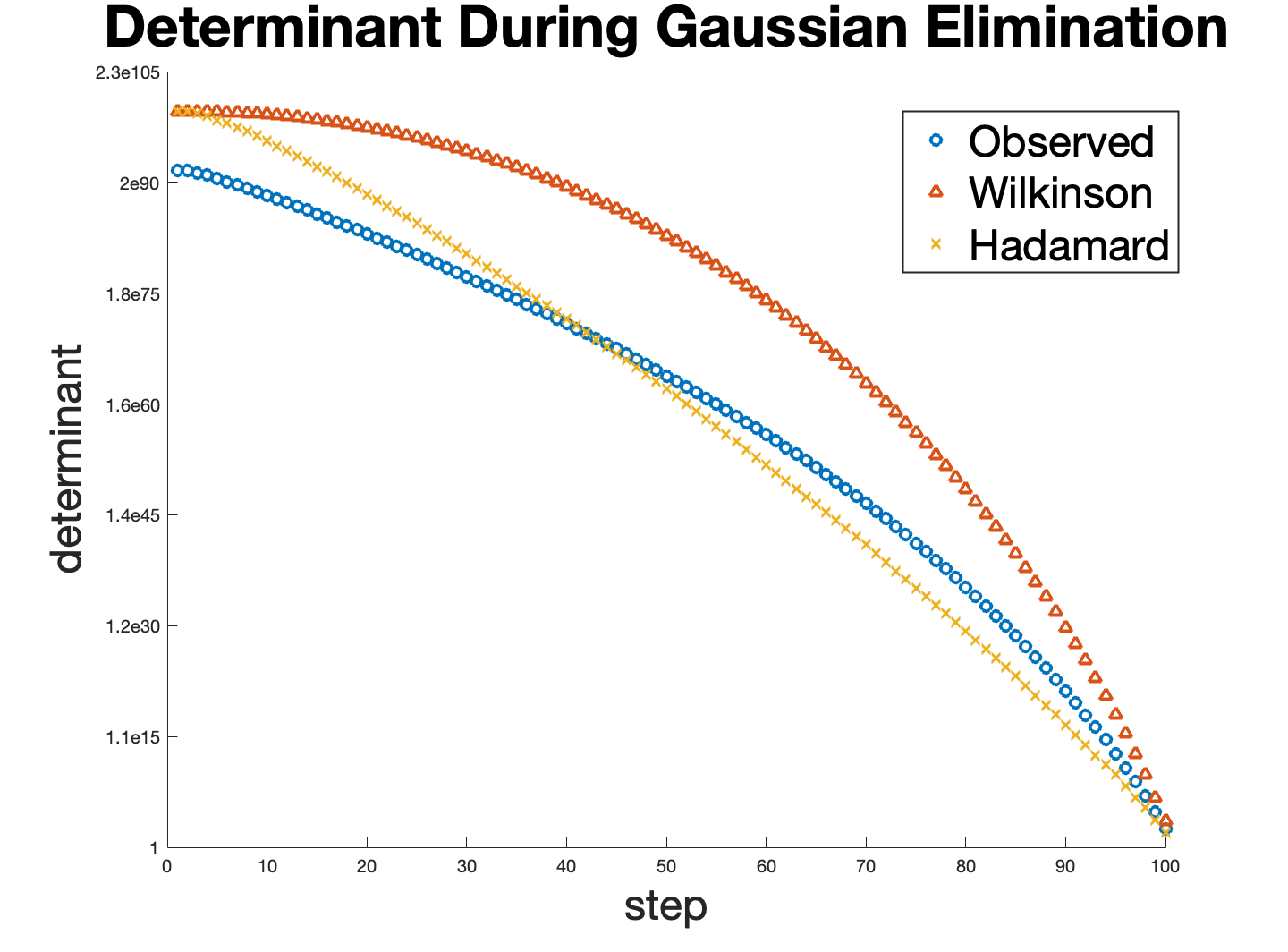}} 
    \subfigure[]{\includegraphics[width=.49\linewidth]{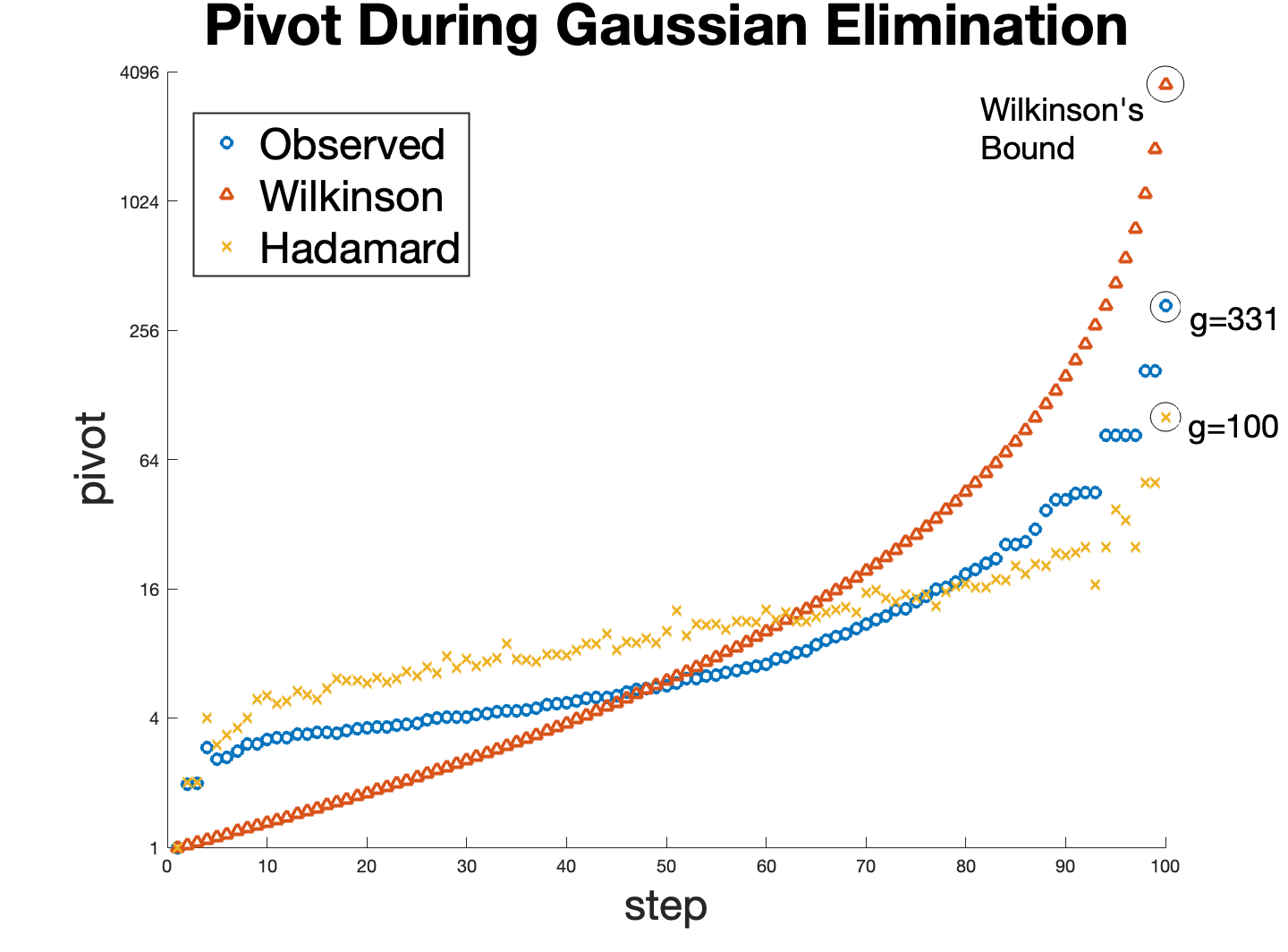}} 
    \caption{
We compare the (modulus of the) determinant and pivots of $A^{(k)}$, $k = 1,...,n$, under GECP for three examples of size $n=100$:
{\bf \color{red} Red: Wilkinson's bound}; {\bf \color{myyellow} Yellow: a particular $n=100$ Hadamard matrix};
{\bf \color {blue} Blue: our observed maximum matrix}.  
(a)  reveals that at least on an admittedly muted log scale, the observed determinant curve qualitatively
is bending in a manner resembling Wilkinson's bound, while the Hadamard data feels qualitatively different, and thus, less relevant.
(b)  suggests the same conclusions  as those of (a) and  also suggests that ``slow and steady wins the race" rather
than ``greedy."
    }
     \label{fig:growth}
\end{figure}

This implies that understanding the growth factor for any restricted set, say, binary matrices, is equivalent (up to polynomial factors) to understanding growth for all real matrices (i.e., if growth for binary matrices is polynomial, then it is polynomial for all matrices, and if growth for all matrices is super-polynomial, then it also is for binary matrices).
We note that the $O(n^2)$ relationship is certainly 
pessimistic for many sets $S$ of interest; our purpose here 
is merely to show it is possible within small polynomial factors to
find such sweeping results.

We also show that the growth factors  under floating point arithmetic and exact arithmetic are nearly identical.

\begin{theorem}[Simplified Version of Theorem \ref{thm:float}]\label{thm:informal_float}
Let
$$t \ge 1 +  \log_\beta\big[ 5 n^3 g^2\big[\mathbf{CP}_n(\mathbb{R})\big] \big].$$
Then the maximum growth factor for a real $n \times n$ matrix under floating point arithmetic with base $\beta$ and mantissa length $t$ is at most $(1+1/n) \,  g\big[\mathbf{CP}_n(\mathbb{R})\big]$.
\end{theorem}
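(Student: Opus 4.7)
The strategy is a Wilkinson-style backward error analysis for floating point Gaussian elimination with complete pivoting (see, e.g., \cite[Chapter 9]{higham2002accuracy}). Fix a real $n \times n$ matrix $A$ being processed in floating point with base $\beta$ and mantissa length $t$; write $\tilde{A}^{(k)}$ for the computed intermediates, $u = \beta^{1-t}/2$ for the unit roundoff, $g = g\big[\mathbf{CP}_n(\mathbb{R})\big]$ for the exact maximum growth factor, and $g_{fp}(A) = \max_k \|\tilde{A}^{(k)}\|_\infty / \|A\|_\infty$ for the floating point growth factor of $A$. The hypothesis on $t$ is equivalent to $u \le 1/(10 n^3 g^2)$, and this inequality is the error budget that the remainder of the proof consumes.

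Using the standard floating point model $fl(x \circ y) = (x \circ y)(1+\delta)$ with $|\delta| \le u$, I would rewrite each elimination step as
\[
\tilde{a}_{ij}^{(k+1)} = \tilde{a}_{ij}^{(k)} - \tilde{a}_{ik}^{(k)} \tilde{a}_{kj}^{(k)} / \tilde{a}_{kk}^{(k)} + \eta_{ij}^{(k)}, \quad |\eta_{ij}^{(k)}| \le 3 u\, g_{fp}(A)\, \|A\|_\infty,
\]
and then prove by induction on $k$ (comparing with exact GE on $A$ following the same pivot sequence) the entrywise bound
\[
|\tilde{a}_{ij}^{(k)} - a_{ij}^{(k)}| \le C\, n\, u\, g_{fp}(A)\, \|A\|_\infty
\]
for a small absolute constant $C$. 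Dividing by $\|A\|_\infty$ and rearranging then gives $g_{fp}(A) \le g_{exact}(A) / (1 - C n u)$, where $g_{exact}(A)$ denotes the exact-arithmetic growth factor of $A$ using the same pivot sequence. The threshold on $t$ forces $C n u \le 1/(n+1)$ provided $g \ge 1$, yielding $g_{fp}(A) \le (1 + 1/n)\, g_{exact}(A)$.

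The remaining step is to show $g_{exact}(A) \le g$. The subtlety is that the pivot sequence selected in floating point may not be the complete-pivot sequence for exact arithmetic on $A$, since comparisons between $\tilde{a}_{ij}^{(k)}$ differ from those between $a_{ij}^{(k)}$ by up to $O(nu)\|A\|_\infty$. However, any $O(nu)$-perturbation of $A$ can be chosen to land inside $\mathbf{CP}_n(\mathbb{R})$ with the floating-point pivot ordering, and its growth factor is bounded by $g$ by definition; the perturbation modifies $\max_k\|A^{(k)}\|_\infty/\|A\|_\infty$ by only a multiplicative $1+O(nu)$ factor, comfortably absorbed in the $(1+1/n)$ slack. \textbf{The main obstacle} is the error-propagation induction: one must obtain a bound linear in both $n$ and in the \emph{a priori unknown} quantity $g_{fp}(A)$, so that the resulting self-referential inequality can be closed against the budget $u \le 1/(10 n^3 g^2)$. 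Keeping the constant $C$ small and absolute (rather than itself depending on $g_{fp}(A)$) is exactly what forces the $\log_\beta(n^3 g^2)$ threshold: the $n^3$ accounts for one factor of $n$ from the number of elimination steps, another from the number of entries updated per step, and a third from closing the self-referential bound, while the $g^2$ absorbs the combined amplification of one roundoff through a chain of intermediates of growth-factor magnitude.
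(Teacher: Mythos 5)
Your proposed forward error bound—comparing the floating-point trajectory of $A$ to the exact-arithmetic trajectory of the same $A$ under the same pivot ordering—claims $|\tilde a_{ij}^{(k)} - a_{ij}^{(k)}| \le C\, n\, u\, g_{fp}(A)\, \|A\|_\infty$ for an absolute constant $C$. That bound is false. Writing $e_{ij}^{(k)} := \tilde a_{ij}^{(k)} - a_{ij}^{(k)}$ and $E_k := \max_{i,j}|e_{ij}^{(k)}|$, the recurrence picks up contributions from $e_{ij}^{(k)}$, $e_{ik}^{(k)}$, $e_{kj}^{(k)}$, and $e_{kk}^{(k)}$, each multiplied by pivot ratios that reach $1$ under complete pivoting, giving $E_{k+1} \lesssim 4 E_k + O(u\, g_{fp}\,\|A\|_\infty)$ and hence $E_n \sim 4^{n}\, u\, g_{fp}\, \|A\|_\infty$. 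This is exactly the geometric amplification recorded in Lemma~\ref{lm:cp_eps_ball}: an $\varepsilon$-perturbation of the input can move $a_{ij}^{(k)}$ by $4^{k-1}\varepsilon$. Feeding the correct $4^n$ bound through the rest of your argument would force $u \lesssim 1/(n\,4^n)$, i.e.\ $t = \Theta(n)$, which is far weaker than the $t = O(\log_\beta^2 n)$ asserted in the statement. There is no salvage within a strict forward comparison; the small-pivot/near-tie problem you flag in the last paragraph makes matters worse still, since along the floating-point pivot ordering the exact pivot $a_{kk}^{(k)}$ need not be dominant and can be arbitrarily small or even zero.

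The paper avoids this entirely by a genuine backward construction (Lemma~\ref{lm:float}): given $A$ completely pivoted in floating point, it builds an explicit nearby $B$ whose \emph{exact} elimination reproduces the floating-point pivot row and column \emph{exactly} at every step, $b_{ij}^{(k)} = \hat a_{ij}^{(k)}$ for $i=k$ or $j=k$, by working backwards from $B^{(n)} = \hat A^{(n)}$ and correcting only the lower-right block so that successive iterates are consistent. Because the pivot row and column of $B^{(k)}$ are pegged to the floating-point values and the partial-pivoting ratios $|\hat a_{ik}^{(k)}/\hat a_{kk}^{(k)}|\le 1$, only the local roundoff terms accumulate, yielding a genuinely linear $O(nu)$ entrywise bound. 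The resulting $B$ lies in $\mathbf{CP}^{\bm\varepsilon}_n(\mathbb{R})$ with small $\bm\varepsilon$, and Lemma~\ref{lm:cp_eps_stable} then carries it into $\mathbf{CP}_n(\mathbb{R})$ with a controlled loss of growth, after which the self-referential inequality is closed by induction on $k$. Your closing sentence that ``any $O(nu)$-perturbation of $A$ can be chosen to land inside $\mathbf{CP}_n(\mathbb{R})$'' is exactly the content of Lemmas~\ref{lm:float} and~\ref{lm:cp_eps_stable} combined; asserting it without the construction assumes the hard part of the theorem.
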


 Theorem \ref{thm:float}
 treats a longstanding gap in the numerical analysis literature, a field where so much energy is  devoted to the
distinction between floating point and exact computations, but  in the context of growth factors, this has not been 
analyzed to date. This theorem provides a link between rounding error analyses that allow the unit round-off to
tend to zero and those that give even more pessimistic bounds because of this difference (see, for example, \cite[Theorem 2.4]{isaacson2012analysis} and the discussion preceding it).

\subsection{Maximum Growth Factors Encountered}\label{sub:max_growth}

With modern software and architecture we were able to find growth factors for matrices well beyond $n=25$ as found by Gould \cite{gould1991growth} 
and also we were able to find larger growth for matrices as small as $n=7$.   No doubt future researchers will be able to 
improve our results in the same manner.

As the optimal growth problem is a constrained optimization problem it is natural
to run optimization software.
In 1988, Day and Peterson  \cite{day1988growth} posed the problem as a function
of the $n^2$ elements of the matrix and reported some success with
 the FORTRAN77 nonlinear programming package NPSOL \cite{NPSOL}.
 By contrast, Gould considered the advantages of posing the problem as an optimization
 over $n^2 + (n-1)^2 + \ldots +1 $ variables with constraints.  He used the
 FORTRAN77 LANCELOT package that he codeveloped \cite{LANCELOT}.
 (LANCELOT is an acronym for ``Large And Nonlinear Constrained Extended Lagrangian Optimization Techniques).

\begin{figure}[t!]
    \centering
    \includegraphics[width=.6\linewidth]{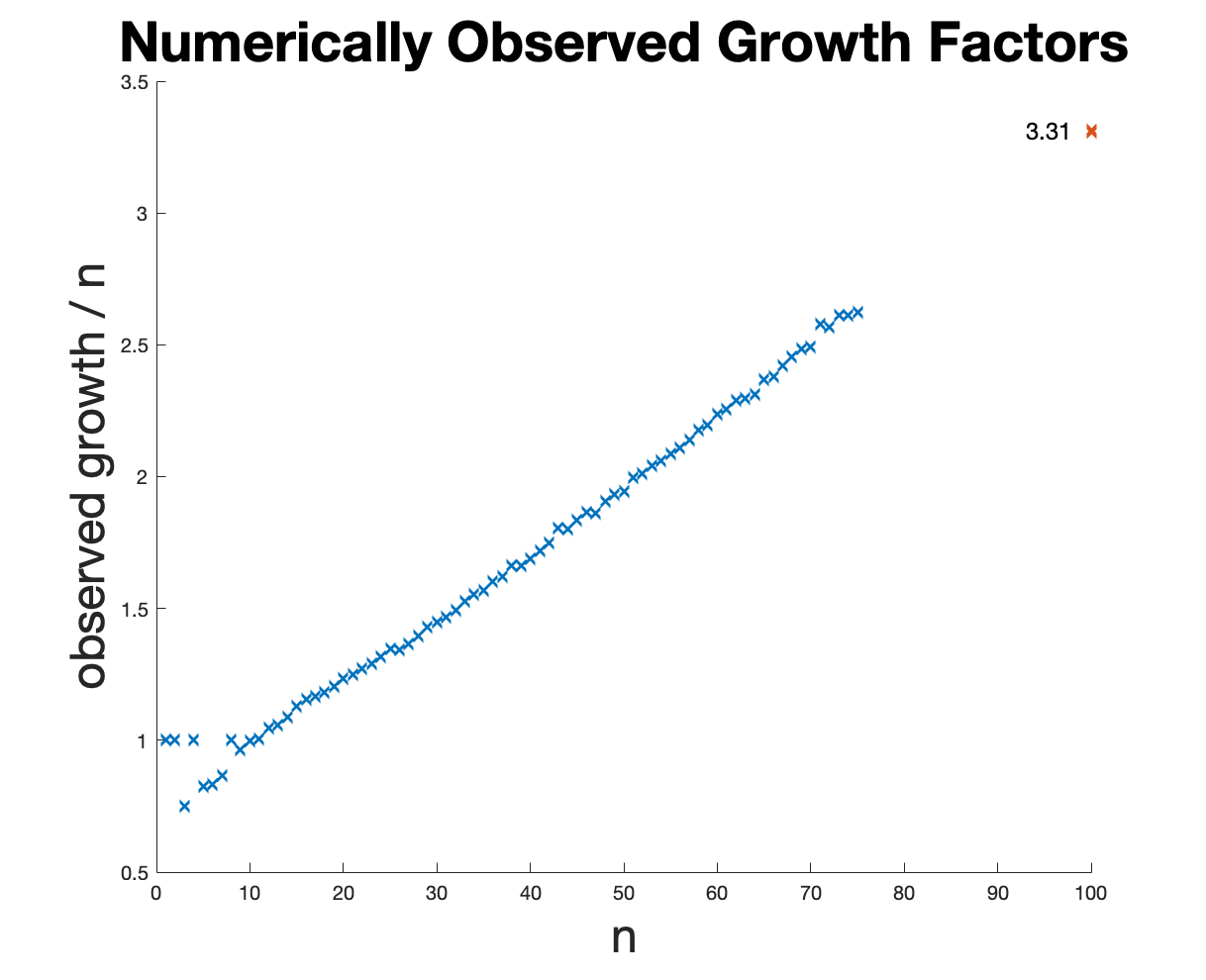}
    \caption{The ratio between numerically observed growth factors and matrix size for $n$ equals $1$ to $75$ and $100$. Only the values for sizes $n = 1,2,3,4$ are known mathematically to be the exact maximal growth factor though we suspect at least for the smaller values of $n$ we are achieving the maximum with
    our JuMP software. This data leads us to make Conjecture \ref{conj:superlinear}. \label{fig:alln}
    }
     \label{fig:growthratio}
\end{figure}

 We chose to follow Gould's approach but chose to use the modern JuMP (acronym: Julia for Mathematical Programming) \cite{DunningHuchetteLubin2017} software library
 with IPOPT (acronym: Interior Point Optimizer) to formulate and solve the minimization problem. The software advantage of using Julia is that the problem can be naturally formulated
 in a manner very similar to the mathematics \cite{bezanson2017julia}. The optimization engine was the COIN-OR Ipopt (interior point optimizer) package called through Ipopt.jl. The software and results may be found in the \href{https://github.com/alanedelman/CompletePivotingGrowth}{online repository} \cite{ourrepo}.

JuMP was run in parallel with 64 randomly chosen starting points on 64 separate threads and the winner, the largest growth factor, was saved.
Computations were performed on a server located at MIT consisting of two AMD EPYC 7502 32-Core Hyperthreading Processors, typically using 64 of the 128 hyperthreads at a time so that others could use the machine for their own work.

 We studied $n=10$ extensively numerically and never exceeded 9.96, so we feel the evidence is very strong to state the following conjecture:
 \begin{conjecture}
The growth factor for complete pivoting: $g\big[\mathbf{CP}_n(\mathbb{R})\big]  \ge n $ if and only if $n > 10$.
\end{conjecture}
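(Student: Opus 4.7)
The plan is to split the biconditional. For the forward direction ($n>10 \Rightarrow g_n > n$), nothing new is required: Theorem~\ref{thm:cp_growth} already gives the strictly stronger inequality $g_n \ge 1.0045\,n$ for all $n>10$. So the entire substantive content lies in the reverse direction, namely an upper bound $g_n \le n$ (with equality permitted only for $n\in\{1,2,4,8\}$) for each $n\in\{5,6,7,8,9,10\}$. The cases $n\le 4$ are classical, and $n=5$ is handled by Tornheim's bound $g_5 \le 4\tfrac{17}{18}$. What remains is the genuinely open range $6\le n\le 10$.

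For these five cases I would pursue a computer-assisted proof by polynomial optimization. For fixed $n$, normalize $\max_{i,j}|a_{i,j}|=1$ and parametrize the problem by the $n^2 + (n-1)^2 + \cdots + 1$ variables consisting of the entries of $A$ together with the later Schur-complement entries $a_{i,j}^{(k)}$, with the defining recursion $a_{k,k}^{(k)} a_{i,j}^{(k+1)} = a_{k,k}^{(k)} a_{i,j}^{(k)} - a_{i,k}^{(k)} a_{k,j}^{(k)}$ imposed as polynomial equalities and the complete-pivoting conditions $|a_{i,j}^{(k)}| \le |a_{k,k}^{(k)}|$ imposed as polynomial inequalities. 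The objective ``$g(A)\le n$'' is equivalent to the polynomial inequality $|a_{k,k}^{(k)}| \le n$ for every $k$. On this semi-algebraic set I would run a Lasserre/SOS hierarchy to extract a rigorous upper-bound certificate, exploiting the signed-permutation symmetry (acting on rows and columns) to quotient out redundant variables before the relaxation.

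A complementary, more structural angle serves as a sanity check and a fallback. Near any putative extremum the Lagrange conditions force algebraic relations between the entries, and the combinatorial type of the active constraint set (which $|a_{i,j}^{(k)}|$ actually equal $|a_{k,k}^{(k)}|$) is a finite datum. Enumerating these types yields a finite list of small polynomial systems, each of which can in principle be solved exactly via Gr\"obner bases or resultants and then certified with interval arithmetic to confirm that no case violates $g_n\le n$. The two approaches converge on the same target but via dual (primal/extremal versus dual/SOS) certificates, which is helpful for verification.

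The main obstacle is $n=10$: the present lower bound $g_{10}\ge 9.96$ leaves almost no slack, so any SOS relaxation would have to be pushed to a high order (with SDPs of forbidding size), and any case enumeration would need extremely tight control on algebraic solutions near the boundary of the feasible set. A secondary obstacle is that even a successful SOS or Gr\"obner certificate for $n\in\{8,9,10\}$ is likely to produce a certificate too large to verify by hand, forcing reliance on formally verified arithmetic and turning the result into a verified software artifact rather than a conceptual proof. A potential escape hatch, only for the strict version of the conjecture, is to combine a slightly weaker rigorous bound $g_n \le n + \varepsilon_n$ with a continuity/perturbation argument that rules out the boundary case $g_n = n$ off of the already-classified Hadamard-type extremal configurations.
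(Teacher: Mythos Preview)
The statement you are attempting is explicitly labeled a \emph{conjecture} in the paper, not a theorem; the paper offers no proof, only numerical evidence (extensive search at $n=10$ never exceeded $9.96$). There is therefore no proof in the paper to compare your proposal against.

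Your treatment of the direction $n>10 \Rightarrow g_n \ge n$ is correct and is exactly what the paper establishes via Theorem~\ref{thm:cp_growth}. For the converse, your accounting is also accurate: $n\le 4$ is classical, $n=5$ is settled by Tornheim's bound $g_5 \le 4\tfrac{17}{18}$, and $6\le n\le 10$ is genuinely open --- that open range is precisely why the paper states this as a conjecture rather than a theorem.

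One caution on the statement itself: as literally written with ``$\ge$'', the biconditional is already false, since $g_n = n$ for $n\in\{1,2,4\}$ (and conjecturally $n=8$). The paper's Table~\ref{tab:goulddata} phrases the conjecture with a strict inequality $g_n > n$, and that is surely the intended reading. Your parenthetical ``equality permitted only for $n\in\{1,2,4,8\}$'' shows you noticed the tension, but then your target ``$g_n \le n$ for $n\le 10$'' is not the converse of the stated biconditional; you should be aiming at $g_n \le n$ with strict inequality off $\{1,2,4,8\}$, or simply at $g_n \not> n$ for $n\le 10$.

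Your SOS/Lasserre and active-constraint-enumeration sketches are reasonable research directions, not a proof, and you correctly flag the central obstruction: at $n=10$ the numerical lower bound $9.96$ leaves essentially no slack, so any relaxation certificate would have to be extraordinarily tight, and the number of sign/activity patterns to enumerate is enormous. Nothing in the paper suggests these cases are within reach by current methods; your proposal should be read as a program for attacking an open problem rather than as a proof awaiting verification.
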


In addition, though exact asymptotic estimates for growth factor remain elusive, we feel that we have seen sufficient numerical evidence (see Figure \ref{fig:growthratio}) to conjecture that the growth factor is super-linear (recall, $f(n) = \omega(g(n))$ if $\lim_{n\rightarrow\infty} f(n)/g(n) = \infty$):

 \begin{conjecture}\label{conj:superlinear}
$g\big[\mathbf{CP}_n(\mathbb{R})\big]  = \omega( n)$.
\end{conjecture}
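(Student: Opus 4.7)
The claim is that $g\big[\mathbf{CP}_n(\mathbb{R})\big]/n$ is unbounded, not merely that $\limsup \ge 3.317$ as delivered by Theorem \ref{thm:cp_growth}. My plan is to construct, from the finite large-ratio examples found numerically (in particular the $n = 100$ matrix with ratio exceeding $3.317$), an explicit family whose ratio diverges. The most natural template is the Kronecker-product strategy that powers Theorem \ref{thm:rp_growth} for rook pivoting: there the closure of $\mathbf{RP}_n$ under $\otimes$ converts one finite supercritical example into polynomial asymptotic growth, and one would like an analogue for $\mathbf{CP}_n$.

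First I would try to isolate a subclass $\mathcal{F} \subset \mathbf{CP}_n(\mathbb{R})$ closed under a growth-amplifying composition. The fundamental obstruction for complete pivoting is that $\mathbf{CP}_n$ is not itself Kronecker-closed: the intermediate matrices produced by GECP on $A \otimes B$ generically fail $|a_{i,j}^{(k)}| \le |a_{k,k}^{(k)}|$ because complete pivoting demands dominance over the entire trailing block rather than just the pivot row and column. Promising candidate classes $\mathcal{F}$ include bordered Hadamard-like matrices, matrices with block-decoupled Schur complements, or matrices whose entries lie in a multiplicatively closed finite set, with Theorem \ref{thm:inform_restrict} available to lift growth from restricted classes back to $\mathbf{CP}_n(\mathbb{R})$ at a polynomial cost in dimension. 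If closure under $\otimes$ (or a weighted variant) can be established together with an inequality of the form $g(A \otimes B) \ge c\, g(A)\, g(B)/\max(m,n)$, then iteration from any finite example with $g(A)/m > 1$ yields polynomial growth, much stronger than $\omega(n)$.

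If no clean compounding class is forthcoming, I would fall back on an additive-padding strategy: for each $m$, find $k = k(m)$ and an embedding of $A \in \mathbf{CP}_m$ into $A' \in \mathbf{CP}_{m+k}$ with $g(A') \ge g(A) + \delta(g(A)/m)\cdot k$, where $\delta$ is strictly increasing in the input ratio. The padded rows and columns would be chosen so that the first $k$ elimination steps act as an independent growth-generating preamble, leaving a rescaled copy of $A$ as the trailing Schur complement. Combined with $\limsup g_n/n \ge 3.317$ and appropriate rescaling, any bootstrap of this form that strictly increases the ratio would iterate to divergence.

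The main obstacle in both strategies is the global-dominance requirement of complete pivoting: at every elimination step, the current pivot must exceed every entry of the entire trailing submatrix, a constraint that is fragile under both block composition and padding since off-block entries grown by elimination can overshoot the diagonal and destroy admissibility. This is precisely the reason closure-under-composition fails for $\mathbf{CP}$ while succeeding for $\mathbf{RP}$, and will be the hardest technical part to overcome. A successful resolution will likely require either discovering a nontrivial closure on a carefully chosen restricted class, or an indirect route — for instance, using Theorem \ref{thm:informal_float} to recast the problem in a tractable finite-precision search, or using Theorem \ref{thm:inform_restrict} to reduce it to a combinatorial problem over a finite alphabet where explicit constructions may become manageable.
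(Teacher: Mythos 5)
The statement you are trying to prove is presented in the paper as a \emph{conjecture}, not a theorem: the authors offer no proof, only the numerical evidence of Figure~\ref{fig:growthratio} (the observed ratio $g_n/n$ appears to climb steadily from roughly $1$ near $n=11$ to over $3.3$ at $n=100$). Your submission, read honestly, is likewise not a proof but a research plan, and you yourself flag the central unresolved obstacle at the end. So there is no ``paper proof'' to compare against, and neither your text nor the paper's establishes $\omega(n)$ growth.

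That said, your diagnosis of why the obvious routes fail is correct and matches the structure of the paper. Complete pivoting is not closed under Kronecker products because the pivot must dominate the \emph{entire} trailing submatrix at every step, and this constraint is destroyed by tensoring generic factors. This is exactly why the paper's Lemma~\ref{prop:extrap_rp}$(ii)$ (multiplicativity under $\otimes$, hence polynomial lower bounds) is proved only for rook pivoting, whereas for complete pivoting Lemma~\ref{lm:cp_extrap}$(ii)$ is limited to tensoring with the fixed $2\times 2$ Hadamard $H_1$, which doubles both $n$ and the growth and therefore can never escape the linear regime: Property~$(iii)$ of Lemma~\ref{lm:cp_extrap} together with Theorem~\ref{thm:cp_growth_restate} gives $g[\mathbf{CP}_n(\mathbb{R})]\ge Cn$ and a $\limsup$ bound, but nothing super-linear. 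Your two fallback ideas --- a restricted Kronecker-closed subclass of $\mathbf{CP}_n$, or an additive-padding bootstrap that strictly increases $g_n/n$ --- are sensible directions, but in each case the step that would actually close the argument (verifying the completely-pivoted condition for the composed or padded matrix) is precisely the step you leave open. Until one of those closures is established, this remains a conjecture, as the paper treats it.
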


\subsection{Remainder of Paper}
The remainder of the paper is organized as follows. In Section \ref{sec:stable}, we prove a stability lemma, which, when combined with numerical experiments and extrapolation results, imply lower bounds for both complete and rook pivoting. In Section \ref{sec:constrain}, we show that the maximum growth factor for matrices with entries in an arbitrary non-trivial set $S$ is nearly as large as the maximum growth factor over all real matrices. In Section \ref{sec:float}, we consider the growth factor in finite precision, and show that only polylogarithmically many bits (in $n$) are needed for this quantity to be at most a constant times the growth factor in exact arithmetic. In Section \ref{sec:numerics}, we describe the numerical programs used to search for large growth factors, prove extrapolation results, and report our mathematically verified results. Finally, in Section \ref{sec:rook}, we study the growth factor for rook pivoting.

\section{Key Stability Lemma: Almost Completely Pivoted is Almost Completely Pivoted}\label{sec:stable}

The ``stability lemma" in this  section  is a critical technical ingredient in the majority of the theorems that follow.  One immediate application follows 
a longstanding tradition of numerical analysis : backward error analysis.
The lemma shows that if a numerical computation, such as the computations described in Sections \ref{sec:intro} and \ref{sec:numerics}, provides a computed growth factor for a ``nearly" completely pivoted matrix, then there is 
a ``nearby" matrix which has a ``nearby" growth factor for complete pivoting.

For a given $\bm{\varepsilon} = (\varepsilon_1,...,\varepsilon_{n-1}) \in \mathbb{R}^{n-1}$, $\varepsilon_i>-1$ for $i =1,...,n-1$, we define
$$\mathbf{CP}^{\bm{\varepsilon}}_n(S) = \{ A \in \textbf{GL}_n(\mathbb{C}) \cap S^{n \times n} \, | \, |a_{i,j}^{(k)}| \le (1+\varepsilon_k) |a_{k,k}^{(k)}| \text{ for all } i,j \ge k, \, (i,j) \ne (k,k) \},$$
where $S$ is some arbitrary subset of $\mathbb{C}$ (typically $\mathbb{R}$ or $\mathbb{C}$), e.g., the set of matrices that are ``almost" completely pivoted (or, for $\varepsilon_k<0$, ``overly" completely pivoted) up to a multiplicative error of $\varepsilon_k$ at the $k^{th}$ step of Gaussian elimination. When $\varepsilon_1=...=\varepsilon_{n-1}>0$, these sets are generally referred to as threshold-pivoted matrices.

We first prove the following lemma, showing that every matrix in $\mathbf{CP}^{\bm{\varepsilon}}_n(S)$ is close to a matrix in $\mathbf{CP}^{\bm{\delta}}_n(S)$, where $\varepsilon_i \ge 0 \ge \delta_i$ for all $i = 1,...,n-1$. The statement and proof for the more general case in which $\varepsilon_i$ and $\delta_i$ may have the same sign is similar, but is slightly more complicated and not needed for our purposes.  We also give an algorithmic description of the procedure in the proof of Lemma \ref{lm:cp_eps_stable} for the case $\bm{\delta}=0$ in Algorithm \ref{alg:eps_stable}, as this subroutine is a crucial part of converting numerically computed results to mathematical proofs of lower bounds.

\begin{algorithm}[t]
\caption{ Practical Algorithm to turn optimization output  to theory: \newline
Convert $A \in \textbf{GL}_n(\mathbb{\mathbb{R}})$ to $B \in \mathbf{CP}_n(\mathbb{R})$}\label{alg:eps_stable}
\begin{algorithmic}[1]
\State $B^{(1)} \gets \textbf{Rational}(A)$ \Comment{Convert floating point matrix to rational matrix}
\For{$k = 1:n-1$}
\State $B^{(k+1)} \gets B^{(k)}- B^{(k)}_{:,k} B^{(k)}_{k,:}/B^{(k)}_{k,k}$
\EndFor
\For{$k = (n-1):-1:1$}
\State $\delta \gets \max \big\{ \max_{i >k} \big(B^{(k)}_{i,k}/B^{(k)}_{k,k}\big)^2 , \, \max_{j >k} \big(B^{(k)}_{k,j}/B^{(k)}_{k,k}\big)^2 , \, \max_{i,j >k} \big|B^{(k)}_{i,j}/B^{(k)}_{k,k}\big| \big\}$
\If{$\delta >1$}  \Comment{else matrix is already CP \hspace*{1.5in} }
    \State $\delta_h \gets  \textbf{Rational}\big( \textbf{NextFloat}(\delta^{1/2})\big)$ \Comment{Approx. $\delta^{1/2}$, round up, make rational}
    \State $B^{(k)}_{:,k} \gets \delta_h B^{(k)}_{:,k}$, $B^{(k)}_{k,:} \gets \delta_h B^{(k)}_{k,:}$
    \If{$k>1$}
    \For{$ i = 1:(k-1)$}
    \State $B^{(i)}_{k,k} \gets B^{(i)}_{k,k} + (\delta_{h}^2-1) B^{(k)}_{k,k}$
    \State $B^{(i)}_{k,k+1:n} \gets B^{(i)}_{k,k+1:n} + (\delta_{h}-1)B^{(k)}_{k,k+1:n}$
    \State $B^{(i)}_{k+1:n,k} \gets B^{(i)}_{k+1:n,k} +  (\delta_{h}-1)B^{(k)}_{k+1:n,k}$
    \EndFor
    \EndIf
    \EndIf
    \EndFor
    \State $B \gets B^{(1)}$
\end{algorithmic}
\end{algorithm}


\begin{lemma}\label{lm:cp_eps_stable}
Let $\bm{\varepsilon} = (\varepsilon_1,...,\varepsilon_{n-1})$ and $\bm{\delta} = (\delta_1,...,\delta_{n-1})$ satisfy  $-1< \delta_i \le 0 \le \varepsilon_i$ for $i =1,...,n-1$. Then, for every $A \in \mathbf{CP}^{\bm{\varepsilon}}_n(S)$, where $S$ equals $\mathbb{R}$ or $\mathbb{C}$, there exists a matrix $B \in \mathbf{CP}^{\bm{\delta}}_n(S)$ such that $b_{n,n}^{(k)} = a_{n,n}^{(k)}$ for all $k =1,...,n$, and
$$\big|b_{i,j}^{(k)}-a_{i,j}^{(k)}\big| \le \max_{\min\{i,j\} \le \ell \le n-1}  \frac{\displaystyle{\bigg[\bigg(\frac{1+\varepsilon_\ell}{1+\delta_\ell}\bigg)^2 -1\bigg]\big|a_{\ell,\ell}^{(\ell)}\big|}}{\displaystyle{ \prod_{p = \min\{i,j\}}^{\ell-1} 1+\delta_p}}  +\sum_{m = \min\{i,j\}}^{\ell-1} \frac{\displaystyle{(\varepsilon_{m} -\delta_m)\big|a_{m,m}^{(m)}\big| }}{\displaystyle{ \prod_{p = \min\{i,j\}}^{m} 1+\delta_p }}.$$

\end{lemma}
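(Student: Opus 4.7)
The plan is to construct $B$ from $A$ by iterative modification in a backward sweep over pivot levels $\ell = n-1, n-2, \ldots, 1$. At step $\ell$ I multiply both the $\ell$-th row and the $\ell$-th column of the current $\ell$-th Schur complement by a factor $\mu_\ell \ge 1$, thereby scaling the pivot $B^{(\ell)}_{\ell,\ell}$ by $\mu_\ell^2$, and I choose $\mu_\ell$ as small as possible so that the $\delta_\ell$-pivoting inequality holds at level $\ell$ after the scaling. After the full sweep, the resulting matrix $B$ lies in $\mathbf{CP}^{\bm{\delta}}_n(S)$.

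The central algebraic observation, verified directly from the rank-one Schur update formula, is that this scaling leaves the deeper Schur complement $B^{(\ell+1)}$ invariant, so no subsequent (shallower) step disturbs an earlier fix. Unrolling the recursion $B^{(k)}_{i,j} = B^{(\ell)}_{i,j} + \sum_{m=k}^{\ell-1} B^{(m)}_{i,m} B^{(m)}_{m,j}/B^{(m)}_{m,m}$ then shows that the effect on $B^{(k)}$ for $k < \ell$ is an additive perturbation supported exactly in positions $(\ell,\ell)$, $(\ell, j)$ with $j > \ell$, and $(i, \ell)$ with $i > \ell$, of signed magnitudes $(\mu_\ell^2 - 1) a^{(\ell)}_{\ell,\ell}$, $(\mu_\ell - 1) a^{(\ell)}_{\ell, j}$, and $(\mu_\ell - 1) a^{(\ell)}_{i, \ell}$; the replacement of $B^{(\ell)}$ by $a^{(\ell)}$ in these formulas is legitimate because these particular row- and column-of-$\ell$ entries of the current $B^{(\ell)}$ are never modified by earlier (deeper) steps of the sweep. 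An immediate corollary is $b^{(k)}_{n,n} = a^{(k)}_{n,n}$ for every $k$, since the position $(n,n)$ is never touched.

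The choice of $\mu_\ell$ must simultaneously defeat two kinds of violation. Row- or column-entry violations at level $\ell$ force $\mu_\ell \ge (1+\varepsilon_\ell)/(1+\delta_\ell)$, using $A \in \mathbf{CP}^{\bm{\varepsilon}}_n(S)$ to bound the untouched ratios $|a^{(\ell)}_{\ell,j}|/|a^{(\ell)}_{\ell,\ell}|$ and $|a^{(\ell)}_{i,\ell}|/|a^{(\ell)}_{\ell,\ell}|$ by $1+\varepsilon_\ell$. Interior violations at positions $(i,j)$ with $i,j > \ell$ are recursive because $B^{(\ell)}_{i,j}$ has already received additive corrections from the earlier scalings $\mu_{\ell'}$ with $\ell' > \ell$; requiring $\mu_\ell^2 (1+\delta_\ell) |a^{(\ell)}_{\ell,\ell}|$ to dominate the current $|B^{(\ell)}_{i,j}|$ yields a recursive upper bound for $\mu_\ell^2 - 1$ of max-plus type, whose base term is $(1+\varepsilon_\ell)^2/(1+\delta_\ell)^2 - 1$ (from the row/column requirement) and whose deeper term absorbs a factor $1/(1+\delta_\ell)$ relative to $\mu_{\ell+1}^2 - 1$ and picks up an additive $(\varepsilon_\ell - \delta_\ell)|a^{(\ell)}_{\ell,\ell}|/(1+\delta_\ell)$ from the row/column contribution at level $\ell$.

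The main obstacle is the bookkeeping needed to unwind this recursion. For a fixed target position $(i,j,k)$ the perturbation $|b^{(k)}_{i,j} - a^{(k)}_{i,j}|$ arises entirely at step $\ell^* = \min\{i,j\}$ and is therefore controlled by $(\mu_{\ell^*}^2 - 1)|a^{(\ell^*)}_{\ell^*,\ell^*}|$, since the row/column subcase $(\mu_{\ell^*} - 1)(1+\varepsilon_{\ell^*})|a^{(\ell^*)}_{\ell^*,\ell^*}|$ is dominated by the diagonal subcase because $\mu_{\ell^*} \ge (1+\varepsilon_{\ell^*})/(1+\delta_{\ell^*}) \ge 1+\varepsilon_{\ell^*} > \varepsilon_{\ell^*}$, which gives $(1+\varepsilon_{\ell^*})/(\mu_{\ell^*}+1) \le 1$. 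Propagating the recursive bound downward through all deeper levels and retaining the worst contributing level $\ell \ge \ell^*$ then yields exactly the stated max-plus-sum expression: the product $\prod_{p=\min\{i,j\}}^{\ell-1}(1+\delta_p)$ in the denominators accumulates one factor of $1/(1+\delta_p)$ per level descended, and the sum $\sum_{m=\min\{i,j\}}^{\ell-1} (\varepsilon_m - \delta_m)|a^{(m)}_{m,m}|/\prod_{p=\min\{i,j\}}^{m}(1+\delta_p)$ collects the row/column contributions accrued along the descent, while the main term at $\ell$ captures the deepest contributing level's base requirement.
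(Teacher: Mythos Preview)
Your approach is essentially identical to the paper's: the paper also constructs $B$ via a backward sweep that scales row and column $k$ of $A^{(k)}$ by $\sqrt{1+\gamma_k}$ (your $\mu_k$), with $\gamma_k$ defined by exactly the two-term max recursion you describe, and then bounds $|b_{i,j}^{(k)}-a_{i,j}^{(k)}|$ by $\gamma_{\min\{i,j\}}|a_{\min\{i,j\},\min\{i,j\}}^{(\min\{i,j\})}|$ before unwinding. The one step you slightly glossed over is why the interior term of the recursion can be written against $\mu_{\ell+1}^2-1$ rather than $\max_{\ell'>\ell}(\mu_{\ell'}^2-1)|a_{\ell',\ell'}^{(\ell')}|/|a_{\ell,\ell}^{(\ell)}|$; the paper makes this explicit by observing that $\gamma_k|a_{k,k}^{(k)}|$ is monotonically non-increasing in $k$ (an immediate consequence of $\delta_k\le 0$ in the recursion itself), which collapses the max to the nearest deeper level and yields the closed-form max-plus-sum you quote.
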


\begin{proof}
To construct $B \in \mathbf{CP}^{\bm{\delta}}_n(S)$, we iteratively define the entries $b_{i,j}^{(k)}$, starting with $k = n$ and working backwards from $k =n$ to $k =1$. The key to this construction is that we scale the row and column of the pivot entry of each matrix $A^{(k)}$ by a fixed multiplicative factor. This operation leaves entries $a_{i,j}^{(\ell)}$, $i,j>k$, unchanged, and so during our procedure each entry is changed at most once. The factor depends on both the maximum magnitude entry $|a_{i,j}^{(k)}|$ over all $i,j>k$ and the maximum over $i=k,j>k$ and $j=k, i>k$. This allows error to propagate additively rather than multiplicatively.

Let $B^{(n)} := A^{(n)}$ and
$$B^{(k)} := \begin{pmatrix} (1+\gamma_k) \, a^{(k)}_{k,k} & \sqrt{1+\gamma_k} \, A^{(k)}_{k,k+1:n} \\
 \sqrt{1+\gamma_k} \, A^{(k)}_{k+1:n,k}  & A^{(k)}_{k+1:n,k+1:n} + B^{(k+1)} - A^{(k+1)}
\end{pmatrix}$$
for $k = 1,...,n-1$, where $\gamma_n:=0$ and
$$ \gamma_k := \max\bigg\{\bigg(\frac{1+\varepsilon_k}{1+\delta_k}\bigg)^2 -1, \frac{\varepsilon_k - \delta_k + \max_{i>k} \gamma_i |a_{i,i}^{(i)}|/|a_{k,k}^{(k)}|}{1+\delta_k} \bigg\}.$$
The quantity $\gamma_k |a_{k,k}^{(k)}|$ is monotonically decreasing with $k$ (as $\delta_k\le 0$), and so we may equivalently write
$$ \gamma_k |a_{k,k}^{(k)}| = \max \bigg\{ \bigg[\bigg(\frac{1+\varepsilon_k}{1+\delta_k}\bigg)^2 -1\bigg]|a_{k,k}^{(k)}|, \frac{(\varepsilon_k - \delta_k ) |a_{k,k}^{(k)}| +  \gamma_{k+1} |a_{k+1,k+1}^{(k)}|}{1+\delta_k} \bigg\},$$
or
$$\gamma_k |a_{k,k}^{(k)}| = \max_{\ell \ge k}  \bigg[\bigg(\frac{1+\varepsilon_\ell}{1+\delta_\ell}\bigg)^2 -1\bigg]|a_{\ell,\ell}^{(\ell)}| \, \prod_{p = k}^{\ell-1} \frac{1}{1+\delta_p}  +\sum_{m = k}^{\ell-1} (\varepsilon_{m} -\delta_m)|a_{m,m}^{(m)}| \, \prod_{p = k}^{m} \frac{1}{1+\delta_p }.$$
Our definitions of $B^{(k)}$ are consistent with one another, as
$$B^{(k)}_{k+1:n,k+1:n} - \frac{B^{(k)}_{k+1:n,k} B^{(k)}_{k,k+1:n}}{b^{(k)}_{k,k}}=A^{(k)}_{k+1:n,k+1:n} + B^{(k+1)} - A^{(k+1)} - \frac{  A^{(k)}_{k+1:n,k} A^{(k)}_{k,k+1:n}}{a^{(k)}_{k,k}} = B^{(k+1)}.$$
Furthermore, as $\varepsilon_k \le \sqrt{1+\gamma_k}$, for $i\le j$ ($j>i$ is similar),
\begin{align*}
    |b_{i,j}^{(k)}-a_{i,j}^{(k)}| = |b_{i,j}^{(i)}-a_{i,j}^{(i)}| &\le \max \{\gamma_i |a_{i,i}^{(i)}|, (\sqrt{1+\gamma_i}-1) |a_{i,j}^{(i)}|\} \\  &\le \max \{ \gamma_i, (1+\varepsilon_i)(\sqrt{1+\gamma_i}-1) \} |a_{i,i}^{(i)}| \\
    &= \gamma_i |a_{i,i}^{(i)}|.
\end{align*}
What remains is to verify that $B \in \mathbf{CP}^{\delta}_n(S)$. We proceed by induction from $k = n-1$ to $k =1$. We need only consider entries in the lower right block of $B^{(k)}$, as, for $i>k$,
$$ |b_{i,k}^{(k)}| = \sqrt{1+\gamma_k} |a_{i,k}^{(k)}| \le (1+\varepsilon_k) \sqrt{1+\gamma_k} |a_{k,k}^{(k)}| = \frac{1+\varepsilon_k}{\sqrt{1+\gamma_k}} |b_{k,k}^{(k)}| \le (1+\delta_k) |b_{k,k}^{(k)}|,$$
and the same bound holds for $b_{k,j}^{(k)}$, $j > k$.

When $k = n-1$,
$$|b_{n,n}^{(n-1)}| = |a_{n,n}^{(n-1)}| \le (1+\varepsilon_{n-1}) |a_{n-1,n-1}^{(n-1)}| = \frac{1+\varepsilon_{n-1}}{1+ \gamma_{n-1}} |b_{n-1,n-1}^{(n-1)}| \le (1+ \delta_{n-1})|b_{n-1,n-1}^{(n-1)}|.$$
Suppose the statement holds for $k = \ell+1,...,n-1$, $\ell<n-1$, and consider $b_{i,j}^{(\ell)}$, $\ell < i \le j$ ($\ell < j < i$ is similar). We have
\begin{align*}
    |b_{i,j}^{(\ell)}| 
    &\le | a_{i,j}^{(\ell)}| + |b_{i,j}^{(i)} - a_{i,j}^{(i)}| \\ &\le (1+\varepsilon_\ell)| a_{\ell,\ell}^{(\ell)}| + \gamma_i |a_{i,i}^{(i)}| \\ &= |a_{\ell,\ell}^{(\ell)}| \big((1+\varepsilon_\ell) +\gamma_i |a_{i,i}^{(i)}|/|a_{\ell,\ell}^{(\ell)}| \big) \\
    &= (1+\delta_\ell) |b_{\ell,\ell}^{(\ell)}| \frac{(1+\varepsilon_\ell) +\gamma_i |a_{i,i}^{(i)}|/|a_{\ell,\ell}^{(\ell)}|}{(1+\delta_\ell)(1+\gamma_\ell)} \\ 
    &\le (1+\delta_\ell) |b_{\ell,\ell}^{(\ell)}|,
\end{align*}
and therefore $B \in \mathbf{CP}^{\delta}_n(S)$.
\end{proof}

A simpler, but weaker version of the above result, relating the maximum growth factor under threshold complete pivoting to that of complete pivoting, is as follows.

\begin{corollary}\label{cor:cp_stab}
Let $S$ equal $\mathbb{R}$ or $\mathbb{C}$, and  $\bm{\varepsilon}=(\epsilon,...,\epsilon)$,  $\epsilon>0$. Then
$$ g\big[\mathbf{CP}_n(S)\big] \ge \frac{ g\big[\mathbf{CP}^{\bm{\varepsilon}}_n(S)\big]}{1 + \epsilon(2+\epsilon) g\big[\mathbf{CP}^{\bm{\varepsilon}}_{n-1}(S)\big] + \epsilon\sum_{i=1}^{n-2} g\big[\mathbf{CP}^{\bm{\varepsilon}}_i(S)\big]}$$
for all $n \in \mathbb{N}$.
\end{corollary}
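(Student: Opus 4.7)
My plan is to apply Lemma~\ref{lm:cp_eps_stable} with $\bm{\delta} = 0$ to any $A \in \mathbf{CP}^{\bm{\varepsilon}}_n(S)$, producing a completely pivoted companion matrix $B \in \mathbf{CP}_n(S)$, and to compare their growth factors directly. Scale invariance of $g$ lets me normalize $\max_{i,j}|a_{i,j}| = 1$, so that $\max_{i,j,k}|a_{i,j}^{(k)}| = g(A)$; the goal becomes $g(B) \ge g(A)/\mathcal{D}$, where $\mathcal{D}$ is the denominator of the corollary, after which passing to the supremum over $A$ finishes everything.

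For the denominator of $g(B)$, the lemma bounds $|b_{i,j} - a_{i,j}|$ by $\gamma_{\min(i,j)}\,|a_{\min(i,j),\min(i,j)}^{(\min(i,j))}|$, and since the sequence $\gamma_k|a_{k,k}^{(k)}|$ is monotonically decreasing in $k$ (noted in the proof of the lemma), this is in turn bounded by $\gamma_1|a_{1,1}|$. Unfolding the two-option recursion for $\gamma_k$ with $\bm{\delta}=0$ and $\varepsilon_i = \epsilon$ yields the closed form
$$\gamma_1|a_{1,1}| = \max_{1 \le \ell \le n-1}\left\{(2\epsilon + \epsilon^2)\,|a_{\ell,\ell}^{(\ell)}| + \epsilon\sum_{m=1}^{\ell-1}|a_{m,m}^{(m)}|\right\}.$$
Two observations convert the right-hand side into $\mathcal{D}-1$. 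First, the top-left $\ell \times \ell$ principal submatrix $A_\ell$ of $A$ lies in $\mathbf{CP}^{\bm{\varepsilon}}_\ell(S)$, since the threshold-CP condition is inherited and $\det(A_\ell)$ is the product of the (non-zero) first $\ell$ pivots of $A$; then $|a_{\ell,\ell}^{(\ell)}| \le g(A_\ell) \le g[\mathbf{CP}^{\bm{\varepsilon}}_\ell(S)]$, using the normalization $\max_{i,j \le \ell}|a_{i,j}| \le 1$. Second, $g[\mathbf{CP}^{\bm{\varepsilon}}_\ell(S)]$ is non-decreasing in $\ell$: for any extremal $\ell \times \ell$ example $A'$, the padded matrix $\mathrm{diag}(A',c)$ with a sufficiently small positive real $c$ lies in $\mathbf{CP}^{\bm{\varepsilon}}_{\ell+1}(S)$ and has the same growth factor. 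Taking $\ell = n-1$ in the outer maximum then gives $\max_{i,j}|b_{i,j}| \le 1 + \gamma_1|a_{1,1}| \le \mathcal{D}$.

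For the numerator of $g(B)$, the fact that $B$ is completely pivoted implies $\max_{i,j,k}|b_{i,j}^{(k)}| = \max_k |b_{k,k}^{(k)}| = \max_k (1+\gamma_k)|a_{k,k}^{(k)}|$. The lemma forces $\gamma_k \ge (1+\epsilon)^2 - 1$, so $1+\gamma_k \ge (1+\epsilon)^2$; meanwhile the threshold-pivoting condition for $A$ yields $\max_k|a_{k,k}^{(k)}| \ge \max_{i,j,k}|a_{i,j}^{(k)}|/(1+\epsilon) = g(A)/(1+\epsilon)$. Multiplying gives $\max_{i,j,k}|b_{i,j}^{(k)}| \ge (1+\epsilon)\,g(A) \ge g(A)$. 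Dividing numerator by denominator yields $g(B) \ge g(A)/\mathcal{D}$, and taking the supremum over $A \in \mathbf{CP}^{\bm{\varepsilon}}_n(S)$ completes the proof. There is no substantive obstacle here; the mildly delicate point is the monotonicity of $\ell \mapsto g[\mathbf{CP}^{\bm{\varepsilon}}_\ell(S)]$, needed to justify replacing the outer max by its value at $\ell = n-1$, which the diagonal-padding construction resolves. Everything else is arithmetic on the bounds Lemma~\ref{lm:cp_eps_stable} already supplies.
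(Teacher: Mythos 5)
Your overall route matches the paper's: apply Lemma~\ref{lm:cp_eps_stable} with $\bm{\delta}=0$, unfold the closed form for $\gamma_1|a_{1,1}|$, bound each pivot ratio $|a_{\ell,\ell}^{(\ell)}|$ by $g\big[\mathbf{CP}^{\bm{\varepsilon}}_\ell(S)\big]$ through the principal submatrix, and collapse the outer maximum at $\ell=n-1$ by monotonicity of $\ell\mapsto g\big[\mathbf{CP}^{\bm{\varepsilon}}_\ell(S)\big]$ (which the paper uses tacitly — making it explicit with the diagonal padding is a sensible addition). The denominator bookkeeping, with the normalization $\max_{i,j}|a_{i,j}|=1$, is fine and in fact slightly more careful than the paper's.

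The numerator step, however, contains a real error. You assert that "the lemma forces $\gamma_k \ge (1+\epsilon)^2-1$," but the lemma sets $\gamma_n := 0$, so $1+\gamma_n = 1$, not $(1+\epsilon)^2$. Consequently the "multiplying" argument — which needs $1+\gamma_k \ge (1+\epsilon)^2$ for \emph{every} $k$ in order to pass from $\max_k|a_{k,k}^{(k)}| \ge g(A)/(1+\epsilon)$ to $\max_k(1+\gamma_k)|a_{k,k}^{(k)}| \ge (1+\epsilon)g(A)$ — does not go through, and the intermediate claim $\max_{i,j,k}|b_{i,j}^{(k)}| \ge (1+\epsilon)g(A)$ is in fact false in general: if $g(A)$ is attained by the last pivot $a_{n,n}^{(n)}$, then $|b_{n,n}^{(n)}| = |a_{n,n}^{(n)}| = g(A)$ with no extra factor, and nothing forces an earlier pivot of $B$ to exceed $g(A)$. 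The weaker conclusion you actually need, $\max_k|b_{k,k}^{(k)}| \ge g(A)$, does hold, but it requires a short case split on the index $k_0$ at which $g(A)=|a_{i_0,j_0}^{(k_0)}|$ is achieved: if $k_0=n$ then necessarily $(i_0,j_0)=(n,n)$ and $|b_{n,n}^{(n)}|=g(A)$; if $k_0<n$ then the threshold condition gives $|a_{k_0,k_0}^{(k_0)}| \ge g(A)/(1+\epsilon)$, and since $k_0<n$ forces $1+\gamma_{k_0}\ge(1+\epsilon)^2$, we get $|b_{k_0,k_0}^{(k_0)}| \ge (1+\epsilon)g(A)\ge g(A)$. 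With that patch, the rest of your argument delivers the corollary.
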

\begin{proof}
The result follows from choosing an $A \in \mathbf{CP}^{\bm{\varepsilon}}_n(S)$ that achieves the maximum growth factor and choosing $B \in \mathbf{CP}_n(S)$ from Lemma \ref{lm:cp_eps_stable} as a lower bound for $g\big[\mathbf{CP}_n(S)\big]$. By Lemma \ref{lm:cp_eps_stable}, we have
$$g(A) = \frac{|a_{n,n}^{(n)}|}{\max_{i,j} |a_{i,j}|} =  \frac{|b_{1,1}|}{\max_{i,j} |a_{i,j}|} \, g(B) \le  \frac{|b_{1,1}|}{ |a_{1,1}|} \, g(B) = (1+\gamma_1) \, g(B),$$
where
$$\gamma_1 \le \max_{\ell<n} \epsilon(2+\epsilon) \frac{|a^{(\ell)}_{\ell,\ell}|}{|a_{1,1}|} + \epsilon \sum_{m = 1}^{\ell-1} \frac{|a^{(m)}_{m,m}|}{|a_{1,1}|} \le  \epsilon(2+\epsilon) g\big[\mathbf{CP}^{\bm{\varepsilon}}_{n-1}(S)\big] + \epsilon \sum_{m = 1}^{n-2} g\big[\mathbf{CP}^{\bm{\varepsilon}}_m(S)\big].$$
\end{proof}

In addition to being a crucial ingredient for our results, Corollary \ref{cor:cp_stab} also has some historical significance. This result, and the associated algorithm illustrates a way to convert almost completely pivoted matrices into matrices that are completely pivoted, without losing much in the growth factor. This has key similarities to Edelman's exact arithmetic extension of Gould's finite precision counterexample to Conjecture \ref{conj:cryer}, and provides some answers to Edelman's perturbation question for growth factor \cite{edelman1992complete}.

\section{Growth Factor for Constrained Entries}\label{sec:constrain}

In this section, we study the maximum growth factor of matrices in $\mathbf{GL}_n(\mathbb{R})\cap S^{n \times n}$ when $S$ is a small set (e.g., $\{0,1\}$). In particular, we aim to show that the maximum growth factor for matrices with entries restricted to some subset $S \subset \mathbb{R}$ is nearly the same as the growth factor over $\mathbb{R}$, up to a quadratic factor in the input $n$. To do so, we proceed as follows: First, we show that the maximum growth factor for matrices at least some prescribed distance from the boundary is almost as large as the maximum growth factor over the entire set (Lemma \ref{lm:cp_eps_ball}). Combining this result with the stability lemma of the previous section (Lemma \ref{lm:cp_eps_stable}) produces a lower bound for the maximum growth factor of sets of matrices that cover $\mathbf{CP}_n(S)$ sufficiently well (Lemma \ref{lm:cp_eps_cover}). Finally, using this lower bound, we show that if our restricted set $S$ is non-trivial (i.e., $|S|>1$), then we can almost achieve the maximum growth factor, up to a quadratic factor in $n$.

 We begin by characterizing a subset of $\mathbf{CP}_n(S)$ that is stable under entry-wise perturbations of size at most $\varepsilon$, i.e., matrices $A$ such that $\{ B \in S^{n \times n} \, | \, |a_{i,j} - b_{i,j}| \le \varepsilon \} \subset \mathbf{CP}_n(S)$. We have the following lemma.

\begin{lemma}\label{lm:cp_eps_ball}
Let $A \in \mathbf{CP}_n(S)$, $S$ equal $\mathbb{R}$ or $\mathbb{C}$, and $ \varepsilon >0$. If $|a_{i,j}^{(k)}| \le |a_{k,k}^{(k)}| - 2 \times 4^{k-1} \varepsilon$ for all $i,j = k,...,n$ (except $i=j=k$), $k = 1,...,n-1$, then
$$\{B \in S^{n \times n} \, | \, |a_{i,j} - b_{i,j}|\le \varepsilon \} \subset \mathbf{CP}_n(S),$$
and $$g\big[\{B \in S^{n \times n} \, | \, |a_{i,j} - b_{i,j}|\le \varepsilon \} \big] \ge g(A) - \varepsilon(4^{n-1}+g(A))/|a_{1,1}|.$$
\end{lemma}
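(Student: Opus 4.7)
The plan is a two-phase induction. Phase one shows by induction on $k$ that
$$\delta_k \, := \, \max_{i,j \ge k} \big|b^{(k)}_{i,j} - a^{(k)}_{i,j}\big| \, \le \, 4^{k-1}\varepsilon$$
for every matrix $B$ in the prescribed entrywise $\varepsilon$-ball around $A$; the base case $\delta_1 \le \varepsilon$ is given by hypothesis. Phase two translates this control on the Schur complements into both the containment $B \in \mathbf{CP}_n(S)$ and the growth-factor bound.

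For the inductive step, write $B^{(k)} = A^{(k)} + E^{(k)}$ with $\|E^{(k)}\|_\infty \le \delta_k$. Then $B^{(k+1)}_{i,j} - A^{(k+1)}_{i,j}$ equals $E^{(k)}_{i,j}$ (bounded by $\delta_k$) plus the Schur-complement discrepancy, which, after placing $b^{(k)}_{i,k}b^{(k)}_{k,j}/b^{(k)}_{k,k}-a^{(k)}_{i,k}a^{(k)}_{k,j}/a^{(k)}_{k,k}$ over the common denominator $b^{(k)}_{k,k}a^{(k)}_{k,k}$, has numerator
$$a^{(k)}_{i,k} e^{(k)}_{k,j} a^{(k)}_{k,k} + a^{(k)}_{k,j} e^{(k)}_{i,k} a^{(k)}_{k,k} + e^{(k)}_{i,k} e^{(k)}_{k,j} a^{(k)}_{k,k} - a^{(k)}_{i,k} a^{(k)}_{k,j} e^{(k)}_{k,k}.$$
Invoking the gap hypothesis $|a^{(k)}_{i,j}| \le |a^{(k)}_{k,k}| - 2\cdot 4^{k-1}\varepsilon \le |a^{(k)}_{k,k}| - 2\delta_k =: \mu$ for all off-pivot $(i,j)$, the numerator is bounded in absolute value by $\delta_k(\mu+\delta_k)(3\mu+2\delta_k)$ via the identity $2\mu(\mu+2\delta_k) + \delta_k(\mu+2\delta_k) + \mu^2 = (\mu+\delta_k)(3\mu+2\delta_k)$, while the denominator satisfies $|b^{(k)}_{k,k}a^{(k)}_{k,k}| \ge (\mu+\delta_k)(\mu+2\delta_k)$. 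The ratio collapses to $\delta_k(3\mu+2\delta_k)/(\mu+2\delta_k) \le 3\delta_k$, so $\delta_{k+1} \le \delta_k + 3\delta_k = 4\delta_k$. This exact cancellation is the crux of the argument: a naive triangle-inequality bound produces a factor of $8$ rather than $4$, too weak to match the $4^{k-1}$ gap in the hypothesis.

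With $\delta_k \le 4^{k-1}\varepsilon$ in hand, the first claim is immediate. For $(i,j)\ne(k,k)$ with $i,j \ge k$,
$$\big|b^{(k)}_{i,j}\big| \le \big|a^{(k)}_{i,j}\big| + \delta_k \le \big|a^{(k)}_{k,k}\big| - 4^{k-1}\varepsilon \le \big|b^{(k)}_{k,k}\big|,$$
and the pivots $b^{(k)}_{k,k}$ are nonzero since the hypothesis forces $|a^{(k)}_{k,k}| \ge 2\cdot 4^{k-1}\varepsilon > \delta_k$, so $B \in \mathbf{CP}_n(S)$.

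For the growth-factor inequality, note that under complete pivoting $\max_{i,j,k}|a^{(k)}_{i,j}| = \max_k|a^{(k)}_{k,k}| =: M$ and $\max_{i,j}|a_{i,j}| = |a_{1,1}|$, with analogous identities for any $B$ already known to lie in $\mathbf{CP}_n(S)$. Selecting $k^*$ where $M$ is attained, $\max_{i,j,k}|b^{(k)}_{i,j}| \ge |b^{(k^*)}_{k^*,k^*}| \ge M - 4^{n-1}\varepsilon$ while $\max_{i,j}|b_{i,j}| \le |a_{1,1}|+\varepsilon$, giving $g(B) \ge (M-4^{n-1}\varepsilon)/(|a_{1,1}|+\varepsilon)$. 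Cross-multiplying,
$$g(A) - g(B) \le \frac{\varepsilon(M + 4^{n-1}|a_{1,1}|)}{|a_{1,1}|(|a_{1,1}|+\varepsilon)} \le \frac{\varepsilon(g(A) + 4^{n-1})}{|a_{1,1}|},$$
which is exactly the claimed bound.
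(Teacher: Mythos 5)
Your proof is correct and follows essentially the same strategy as the paper's: inductively propagating the entrywise perturbation through the Schur complements to get the factor-of-$4$ growth $\delta_{k+1}\le 4\delta_k$ (the paper tracks the same three error contributions term-by-term, while you organize the algebra over the common denominator $b^{(k)}_{k,k}a^{(k)}_{k,k}$ and invoke a product identity, but the cancellation exploited is the same), and then reads off both the complete-pivoting containment and the growth-factor bound exactly as the paper does.
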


\begin{proof}
Let $B \in S^{n \times n}$ satisfy $b^{(1)}_{i,j} = a^{(1)}_{i,j} + \theta^{(1)}_{i,j}$, where $|\theta^{(1)}_{i,j}| \le \varepsilon$. Then
   $$ b_{i,j}^{(2)} = \bigg[ (a_{i,j}^{(1)} + \theta^{(1)}_{i,j}) - \frac{(a_{i,1}^{(1)} + \theta^{(1)}_{i,1})(a_{1,j}^{(1)} + \theta^{(1)}_{1,j})}{(a_{1,1}^{(1)} + \theta^{(1)}_{1,1})}\bigg]+ a_{i,j}^{(2)} - \bigg[a_{i,j}^{(1)}- \frac{a^{(1)}_{i,1} a^{(1)}_{1,j}}{a^{(1)}_{1,1}}\bigg] = a_{i,j}^{(2)} + \theta^{(2)}_{i,j},$$
where
$$\theta^{(2)}_{i,j}:= \theta_{i,j}^{(1)} + \theta_{1,1}^{(1)} \frac{a_{i,1}^{(1)} a_{1,j}^{(1)}}{a_{1,1}^{(1)}(a_{1,1}^{(1)} + \theta_{1,1}^{(1)})} - \frac{\theta_{i,1}^{(1)}a_{1,j}^{(1)} +\theta_{1,j}^{(1)}a_{i,1}^{(1)}+\theta_{i,1}^{(1)}\theta_{1,j}^{(1)}}{a_{1,1}^{(1)} + \theta_{1,1}^{(1)}}.$$
Since $|a_{i,1}^{(1)}|,|a_{1,j}^{(1)}|\le |a_{1,1}^{(1)}|-2 \varepsilon <|a_{1,1}^{(1)}|- \varepsilon \le |a_{1,1}^{(1)} + \theta_{1,1}^{(1)}|$, we have
$$|\theta^{(2)}_{i,j}| \le \varepsilon \bigg(1 + \frac{2|a_{1,j}^{(1)}| +|a_{i,1}^{(1)}+ \theta_{i,1}^{(1)}|}{|a_{1,1}^{(1)} + \theta_{1,1}^{(1)}|}\bigg) \le 4 \varepsilon .$$
Repeating this estimate for $k = 3,...,n$ with $\varepsilon$ replaced by $4^{k-2} \varepsilon$, we have 
$|a_{i,j}^{(k)}-b_{i,j}^{(k)}| \le 4^{k-1} \varepsilon$ for all $i,j,k$. Suppose that $g(A)$ is achieved by entry $a_{i,j}^{(k)}$. Then
$$g(B) \ge \frac{|a^{(k)}_{i,j}|-4^{k-1} \varepsilon}{|a_{1,1}|+\varepsilon} = g(A) - \frac{4^{k-1}\varepsilon |a_{1,1}| + \varepsilon |a_{i,j}^{(k)}|}{(|a_{1,1}|+\varepsilon)|a_{1,1}|} \ge g(A) - \varepsilon(4^{n-1} +g(A))/|a_{1,1}|.$$
\end{proof}

Combining Lemmas \ref{lm:cp_eps_stable} and \ref{lm:cp_eps_ball}, we are now prepared to prove a lemma regarding the maximum growth factor over sets that cover $\mathbb{R}^{n \times n}$ (or $\mathbb{C}^{n \times n}$) sufficiently well.

\begin{lemma}\label{lm:cp_eps_cover}
Let $n>1$, $0<\varepsilon<2^{-(2n-1)}$, and $S$ equal $\mathbb{R}$ or $\mathbb{C}$. Let $X\subset S^{n \times n}$ be a subset such that, for all $A \in \mathbf{CP}_n(S)$, there exists an $\alpha \in S$ and $B \in X$ satisfying $|a_{i,j} - \alpha \, b_{i,j}| \le \varepsilon |a_{1,1}|$ for all $i,j = 1,...,n$. Then $ \mathbf{CP}_n(S) \cap X $ is non-empty and 
$$g\big[  \mathbf{CP}_n(S) \cap X \big]\ge \big(1 - \varepsilon n 4^{n-1} g\big[\mathbf{CP}_n(S)\big] /(2\varepsilon;4)_{n} \big) \, g\big[\mathbf{CP}_n(S)\big],$$
where $(\cdot;\cdot)_n$ is the q-Pochhammer symbol.
\end{lemma}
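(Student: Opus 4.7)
The plan is a two-step backward-stability argument that interleaves Lemma \ref{lm:cp_eps_stable} and Lemma \ref{lm:cp_eps_ball}. Fix $A^* \in \mathbf{CP}_n(S)$ with $g(A^*) = g^* := g\big[\mathbf{CP}_n(S)\big]$, normalized so that $|a^*_{1,1}| = 1$ (permissible since the growth factor is scale-invariant). The overall idea is first to inflate the pivots of $A^*$ to obtain a matrix $\tilde A \in \mathbf{CP}^{\bm\delta}_n(S)$ whose Schur complements have strict pivot dominance with margin $2 \cdot 4^{k-1} \varepsilon |\tilde a_{1,1}|$ at every step $k$---precisely the hypothesis of Lemma \ref{lm:cp_eps_ball} with $\varepsilon$ there replaced by $\varepsilon |\tilde a_{1,1}|$---and then use the covering assumption to extract an element of $\mathbf{CP}_n(S) \cap X$ from the entrywise ball around $\tilde A$ that Lemma \ref{lm:cp_eps_ball} certifies to be contained in $\mathbf{CP}_n(S)$.

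The inflation step would apply Lemma \ref{lm:cp_eps_stable} with $\bm\varepsilon = \bm{0}$ (allowed since $A^* \in \mathbf{CP}^{\bm{0}}_n(S)$) and a strictly negative $\bm\delta$ chosen recursively so that $|\delta_k|$ is just large enough to produce a gap of $2 \cdot 4^{k-1} \varepsilon |\tilde a_{1,1}|$ between the $k$-th pivot of $\tilde A$ and the rest of its $k$-th Schur complement. The construction in Lemma \ref{lm:cp_eps_stable} scales the $k$-th pivot entry by $1+\gamma_k$ (and its row and column by $\sqrt{1+\gamma_k}$), where $1+\gamma_k = (1+\delta_k)^{-2}$, so these per-step scalings compose multiplicatively along the diagonal. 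Because the required slack at level $k$ grows by a factor of $4$ at each step, the cumulative inflation $|\tilde a_{1,1}| = 1 + \gamma_1$ telescopes to a quantity bounded by $1/(2\varepsilon;4)_n = \prod_{k=0}^{n-1}(1 - 2 \cdot 4^k \varepsilon)^{-1}$, and the hypothesis $\varepsilon < 2^{-(2n-1)}$ is exactly what keeps every factor of this product strictly positive.

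With $\tilde A$ in hand, the covering hypothesis provides $\alpha \in S$ and $B \in X$ with $|\tilde a_{i,j} - \alpha b_{i,j}| \le \varepsilon |\tilde a_{1,1}|$. Applying Lemma \ref{lm:cp_eps_ball} to $\tilde A$ with parameter $\varepsilon |\tilde a_{1,1}|$ places $\alpha B$ inside $\mathbf{CP}_n(S)$ and yields $g(\alpha B) \ge g(\tilde A) - \varepsilon(4^{n-1} + g(\tilde A))$. Since $|\tilde a_{1,1}| > \varepsilon |\tilde a_{1,1}|$ forces $\alpha \ne 0$, scale-invariance of the complete-pivoting conditions and of the growth factor yield $B \in \mathbf{CP}_n(S) \cap X$ with $g(B) = g(\alpha B)$, establishing non-emptiness. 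Finally, the entrywise error estimate in Lemma \ref{lm:cp_eps_stable} bounds $g^* - g(\tilde A)$ by a sum of $n$ terms of the form $\varepsilon |a^{*(k)}_{k,k}|/|\tilde a_{1,1}| \le \varepsilon g^*/(2\varepsilon;4)_n$, mirroring the telescoping calculation in the proof of Corollary \ref{cor:cp_stab} and contributing the factor of $n$ in the $\varepsilon n 4^{n-1}$ numerator of the stated bound.

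The main obstacle will be the book-keeping of constants: verifying that the recursively-defined $\bm\delta$ actually produces $1 + \gamma_1 \le 1/(2\varepsilon;4)_n$ rather than something substantially larger, and tracking the additive errors so that they sum cleanly into the stated form $\varepsilon n 4^{n-1} g^* /(2\varepsilon;4)_n$. This requires unwinding the max/sum formula for $\gamma_k$ from Lemma \ref{lm:cp_eps_stable} in the special case $\bm\varepsilon = \bm{0}$, using the complete-pivoting bound $|a^{*(k)}_{k,k}| \le g^* |a^*_{1,1}|$, and recognizing the resulting geometric-type expression as a telescoping of the q-Pochhammer factors defining $(2\varepsilon;4)_n$.
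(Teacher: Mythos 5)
Your plan follows the same architecture as the paper's proof: take a maximizer $A^*$ of the growth factor, apply Lemma~\ref{lm:cp_eps_stable} with $\bm{\varepsilon}=\bm{0}$ and an entrywise-negative $\bm{\delta}$ to obtain a nearby matrix in $\mathbf{CP}^{\bm{\delta}}_n(S)$ with a strict pivoting margin, invoke Lemma~\ref{lm:cp_eps_ball} together with the covering hypothesis to extract an element of $\mathbf{CP}_n(S)\cap X$, and then propagate the entrywise errors into the stated loss in growth factor. The hypothesis $\varepsilon<2^{-(2n-1)}$ being exactly what keeps $(2\varepsilon;4)_n$ positive is correctly identified.

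Two differences worth noting. First, the paper takes the simpler route of fixing $\delta_k=-2\times4^{k-1}\varepsilon$ directly and then using $|c^{(k)}_{k,k}|\ge 1$ (a consequence of $A$ being a normalized maximizer) to turn the relative slack $2\times4^{k-1}\varepsilon|c^{(k)}_{k,k}|$ into the absolute slack $2\times4^{k-1}\varepsilon$ that Lemma~\ref{lm:cp_eps_ball} needs; your recursive choice of $\bm{\delta}$ aiming for margin $2\cdot4^{k-1}\varepsilon|\tilde a_{1,1}|$ creates an implicit dependence of $\bm{\delta}$ on itself that the paper avoids. Second, your claim that $1+\gamma_k=(1+\delta_k)^{-2}$ and that the per-step scalings ``compose multiplicatively along the diagonal'' oversimplifies what Lemma~\ref{lm:cp_eps_stable} actually gives: when $\bm{\varepsilon}=\bm{0}$, the factor $\gamma_k$ is the \emph{maximum} of $(1+\delta_k)^{-2}-1$ and a quantity that feeds forward $\gamma_{k+1}$ with weight $|a^{(k+1)}_{k+1,k+1}|/|a^{(k)}_{k,k}|$, so $\gamma_1$ is governed by the full max-of-sum formula (one geometric term plus a running sum of pivot ratios), not by a telescoping product; this is what produces the $n\,4^{n-1}g^*$ factor in the numerator rather than a power of $(2\varepsilon;4)_n^{-1}$ alone. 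Your closing paragraph already flags that this book-keeping is the delicate step, and the paper's proof is essentially the careful execution of exactly that step with the simpler fixed $\bm{\delta}$.
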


\begin{proof}
The main idea of the proof is as follows. We consider a matrix $A \in \mathbf{CP}_n(S)$, $a_{1,1} = 1$, that maximizes growth factor (i.e., $g(A) = g\big[\mathbf{CP}_n(S)\big]$) and, using Lemma \ref{lm:cp_eps_stable} applied to $\mathbf{CP}_n(S)$ and $\mathbf{CP}^{\bm{\delta}}_n(S)$ for $\bm{\delta}$ entry-wise negative, find a nearby matrix $C \in \mathbf{CP}^{\bm{\delta}}_n(S)$. Then, we find a matrix $B \in X$ nearby $C$ and, using Lemma \ref{lm:cp_eps_ball}, conclude that $B \in \mathbf{CP}_n(S)$. Finally, using the bounds on $|a^{(k)}_{i,j}-c^{(k)}_{i,j}|$ and $|b_{i,j}-c_{i,j}|$ we argue that $g(B)$ is fairly large.

So that we may apply Lemma \ref{lm:cp_eps_ball}, we define $\delta_k = - 2 \times 4^{k-1} \varepsilon$ and let $C \in \mathbf{CP}^{\bm{\delta}}_n(S)$ be the matrix resulting from the proof of Lemma \ref{lm:cp_eps_stable}. Because $A$ maximizes $g(A)$, $|a^{(k)}_{k,k}| \ge 1$ for $k=1,...,n$ and therefore $|c^{(k)}_{k,k}| \ge 1$ for $k=1,...,n$ as well. In this case, $C$ satisfies
$$|c_{i,j}^{(k)}| \le (1+ \delta_k) |c_{k,k}^{(k)}| = |c_{k,k}^{(k)}|-2 \times 4^{k-1} \varepsilon |c_{k,k}^{(k)}| \le |c_{k,k}^{(k)}|-2 \times 4^{k-1} \varepsilon,$$
and so, by Lemma \ref{lm:cp_eps_ball} combined with our lemma hypothesis, there exists a matrix $B \in \mathbf{CP}_n(S) \cap X$ (w.l.o.g. $\alpha=1$) with $|b_{i,j}-c_{i,j}| \le \varepsilon$.

What remains is to bound the differences $|a_{1,1} - b_{1,1}|$ and $|a_{n,n}^{(n)}-b_{n,n}^{(n)}|$, and compute a lower bound for $g(B)$. By Lemmas \ref{lm:cp_eps_stable} and \ref{lm:cp_eps_ball},
\begin{align*} 
|a_{1,1} - b_{1,1}| &\le |a_{1,1} - c_{1,1}| + |b_{1,1} - c_{1,1}| \\
&\le g(A) \bigg[ \frac{\big(1-2\times 4^{n-2} \epsilon\big)^{-2} -1}{\prod_{p=1}^{n-2} (1-2 \times 4^{p-1} \varepsilon)} + \sum_{m=1}^{n-2}  \frac{2\times 4^{m-1} \varepsilon }{\prod_{p=1}^{m} (1-2 \times 4^{p-1} \varepsilon)}  \bigg] +  \epsilon \\
&= \varepsilon \bigg(1 + g(A) \bigg[ \frac{2 \times 4^{n-2}}{1-2 \times 4^{n-2} \varepsilon} (2 \varepsilon;4)_{n-1}^{-1} + \sum_{m=1}^{n-1} 2\times 4^{m-1} (2 \varepsilon;4)_{m}^{-1}  \bigg] \bigg) \\
&\le \varepsilon \big(1 + 2 \, n \, 4^{n-2} g(A)/ (2 \varepsilon;4)_{n}  \big),
\end{align*}
and
$$|a_{n,n}^{(n)}-b_{n,n}| \le |a_{n,n}^{(n)} - c_{n,n}^{(n)}| + |b_{n,n}^{(n)} - c_{n,n}^{(n)}| \le 4^{n-1} \varepsilon.$$
Therefore,
\begin{align*}
   \frac{|b_{n,n}^{(n)}|}{|b_{1,1}|} &\ge \frac{g(A)- 4^{n-1} \varepsilon}{1+\varepsilon \big(1 + 2 \, n \, 4^{n-2} g(A)/ (2 \varepsilon;4)_{n}  \big) } \\
    &= g(A) - \varepsilon \; \frac{4^{n-1} + g(A)\big(1 + 2 \, n \, 4^{n-2} g(A)/ (2 \varepsilon;4)_{n}  \big)}{1+\varepsilon \big(1 + 2 \, n \, 4^{n-2} g(A)/ (2 \varepsilon;4)_{n}  \big)} \\
    &\ge  g(A) - \varepsilon \; \big( 4^{n-1} + g(A)\big(1 + 2 \, n \, 4^{n-2} g(A)/ (2 \varepsilon;4)_{n}  \big) \big) \\
    &= g(A) \big(1 - \varepsilon (4^{n-1}/g(A) + 1 + 2 \, n \, 4^{n-2} g(A)/ (2 \varepsilon;4)_{n}  ) \big) \\
    &\ge g(A) \big(1 - \varepsilon n 4^{n-1} g(A) /(2\varepsilon;4)_{n} \big).
\end{align*}
\end{proof}

The requirement on the cover that $X$ provides in the previous lemma is quite strong; for a non-trivial result we require $\varepsilon$ to be exponentially small in $n$. However, the set of $m \times m$ matrices $A^{(n-m+1)}$ resulting from many steps of Gaussian elimination applied to the set of $A \in S^{n \times n}$, for $S$ finite, does indeed provide an approximation to any $m \times m$ matrix with error exponentially small in $m$ (where $m$ is a sufficiently small fixed polynomial in $n$). We formalize this concept in the proof of the following theorem, which relates the maximum growth of $\mathbf{CP}_{m}(S)$, $S \subset \mathbb{R}$, $|S|>1$, to that of $\mathbf{CP}_{n}(\mathbb{R})$.

\begin{theorem}\label{thm:restrict}
If $S \subset \mathbb{R}$, then
$$g\big[\mathbf{CP}_{m}(S)\big] \ge \frac{\text{diam(S)}}{2 \max_{s \in S} |s|} g\big[\mathbf{CP}_{n}(\mathbb{R})\big] \quad \text{for all } m > 4n(3n+1).$$
\end{theorem}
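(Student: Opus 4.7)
The plan is to apply Lemma \ref{lm:cp_eps_cover} with the underlying set taken to be $\mathbb{R}$, and with the cover $X \subset \mathbb{R}^{n \times n}$ chosen as the collection of trailing $n \times n$ Schur complements $C^{(m-n+1)}$ of matrices $C \in \mathbf{CP}_m(S)$. Since Schur complements of completely pivoted matrices are themselves completely pivoted, $X \subseteq \mathbf{CP}_n(\mathbb{R})$ automatically. Assuming the coverage hypothesis of Lemma \ref{lm:cp_eps_cover} is met, we obtain a matrix $B = C^{(m-n+1)} \in X$ with $g(B)$ close to $g\big[\mathbf{CP}_n(\mathbb{R})\big]$, from which
$$g(C) \;\ge\; \frac{|c_{m,m}^{(m)}|}{\max_{s\in S}|s|} \;=\; \frac{|b_{n,n}^{(n)}|}{\max_{s\in S}|s|} \;=\; \frac{g(B)\,|b_{1,1}|}{\max_{s\in S}|s|},$$
so it suffices to arrange $|b_{1,1}| \ge \text{diam}(S)/2$. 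This is compatible with $B$ being completely pivoted, because the triangle inequality gives $\text{diam}(S)/2 \le \max_{s\in S}|s|$, leaving room for the required pivot dominance of $C$.

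The main technical step is to verify that $X$ supplies a sufficiently fine cover: for every $A \in \mathbf{CP}_n(\mathbb{R})$, one must exhibit $\alpha \in \mathbb{R}$ and $B \in X$ with $|a_{i,j} - \alpha b_{i,j}| \le \varepsilon |a_{1,1}|$, where $\varepsilon$ is not only below the $2^{-(2n-1)}$ threshold required by Lemma \ref{lm:cp_eps_cover} but small enough (roughly $2^{-\Theta(n\log n)}$, using Wilkinson's a priori bound on $g\big[\mathbf{CP}_n(\mathbb{R})\big]$) that the multiplicative loss in that lemma becomes $o(1)$. The plan is to fix $s_\ell, s_h \in S$ with $|s_h - s_\ell| = \text{diam}(S)$, rescale the target so that $|a_{1,1}| = \text{diam}(S)/2$, and construct $C \in S^{m \times m}$ whose leading $(m-n) \times (m-n)$ principal block together with its two off-diagonal coupling blocks realizes the prescribed Schur complement $(\text{diam}(S)/2)\,A$. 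With $m - n = \Theta(n^2)$ additional rows and columns one has $\Theta(n^3)$ $S$-valued parameters against only $n^2$ real targets; the achievable precision of the Schur complement is bounded by the inverse of a determinant, which can reach $2^{\Theta(n^2\log n)}$, comfortably beating the threshold dictated by $m > 4n(3n+1)$.

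The main obstacle I anticipate is enforcing the strict complete-pivoting inequalities $|c_{i,j}^{(k)}| \le |c_{k,k}^{(k)}|$ at all $m-n$ intermediate elimination steps of $C$, simultaneously with the precision constraint on the Schur complement. The plan to handle this is to construct $C$ first as a member of the relaxed set $\mathbf{CP}^{\bm{\varepsilon}}_m(S)$ of ``almost'' completely pivoted matrices, where the combinatorial flexibility is much greater, and then invoke the stability machinery of Lemma \ref{lm:cp_eps_stable} (with its quantitative consequence Corollary \ref{cor:cp_stab}) to perturb $C$ into a genuine element of $\mathbf{CP}_m(S)$. The generous factor of $\tfrac{1}{2}$ in $\text{diam}(S)/(2\max_{s\in S}|s|)$, together with the super-polynomial precision margin afforded by the $m-n = \Theta(n^2)$ intermediate Schur steps, absorbs the multiplicative $1+o(1)$ loss introduced by the perturbation and delivers the clean bound stated in the theorem.
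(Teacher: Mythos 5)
Your high-level framing --- apply Lemma \ref{lm:cp_eps_cover} with $X$ the set of trailing $n\times n$ Schur complements of matrices in $\mathbf{CP}_m(S)$ --- matches the paper's strategy, and the observation that such Schur complements automatically land in $\mathbf{CP}_n(\mathbb{R})$ is correct. However, your plan for closing the ``main obstacle'' you identify does not work. You propose to build $C$ in the relaxed set $\mathbf{CP}^{\bm{\varepsilon}}_m(S)$ and then invoke Lemma \ref{lm:cp_eps_stable} (or Corollary \ref{cor:cp_stab}) to push $C$ back into $\mathbf{CP}_m(S)$, but that lemma is stated and proved only for $S=\mathbb{R}$ or $\mathbb{C}$, and its construction multiplies rows and columns of the intermediate Schur complements by factors $\sqrt{1+\gamma_k}$. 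For a restricted (e.g.\ finite) $S$ the resulting matrix has entries outside $S$: you obtain a completely pivoted \emph{real} matrix, not an element of $S^{m\times m}$, and so it cannot serve as a witness for $g\big[\mathbf{CP}_m(S)\big]$, nor can its Schur complement be a member of your cover $X$. Separately, the density claim (``$\Theta(n^3)$ $S$-valued parameters against $n^2$ real targets; precision bounded by the inverse of a determinant, which can reach $2^{\Theta(n^2\log n)}$'') is a dimension-count heuristic, not an argument: a surplus of discrete parameters does not show the image of the Schur-complement map over $\mathbf{CP}_m(S)$ is anywhere near dense in the required neighborhood, and the appeal to an unspecified determinant is never substantiated.

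The paper resolves this by being entirely explicit rather than perturbative. A one-step bordering construction with two elements $s_1,s_2\in S$ reduces the problem to the set $\{0,1\}$ at the cost of the factor $\text{diam}(S)/\max_{s\in S}|s|$; a fixed small gadget then manufactures $\{0,\tfrac12,1\}$ entries after a constant number of elimination steps; and a structured block matrix built from identity blocks, $\tfrac12 I$ blocks, and the binary-digit matrices $R_0,\dots,R_{3n}$ of the entries of the target $A$ produces, after $3n^2$ further steps, a trailing $n\times n$ block equal to $R_0-\tfrac12 R_1-\cdots-2^{-3n}R_{3n}$, within $2^{-3n}$ of $A$. Because every intermediate matrix is written down, the pivot inequalities along the way are checked directly and no perturbation of $C$'s entries is ever needed, so nothing ever leaves $S$; the remaining factor $\tfrac12$ is exactly the loss Lemma \ref{lm:cp_eps_cover} incurs at $\varepsilon=2^{-3n}$ against Wilkinson's a priori bound. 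This explicit gadgetry is the actual content of the theorem, and it is precisely what your sketch is missing.
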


\begin{proof}
The main idea of the proof is to build a matrix $B \in S^{m \times m}$, $m = n + p$, such that iterates $B^{(i)}$, $i = 1,...,p$, are completely pivoted, $|b_{p+1,p+1}^{(p+1)}|\ge |b_{1,1}|$, and $B^{(p+1)}$ approximates an arbitrary $A \in \mathbf{CP}_{n}(\mathbb{R})$ exponentially well. If we can approximate an arbitrary $A$ up to error $2^{-3n}$, i.e., $|a_{i,j} - \alpha b^{(k+1)}_{i,j}| \le 2^{-3n} |a_{1,1}|$ for some fixed $\alpha$, then, by Lemma \ref{lm:cp_eps_cover} combined with Wilkinson's bound (Inequality \ref{eqn:wilk}) for $g\big[\mathbf{CP}_{n}(\mathbb{C})\big]$ (for $n>1$),
$$g\big[\mathbf{CP}_{m}(S)\big] \ge (1-2^{-(n+1)} n^{\ln(n)/4 + 3/2}/(2^{1-3n};4)_{n})g\big[\mathbf{CP}_{n}(\mathbb{R})\big] \ge \frac{1}{2} g\big[\mathbf{CP}_{n}(\mathbb{R})\big].$$
What remains is to construct the matrix $B$. 

Given any $s_1,s_2 \in S$, $|s_1|<|s_2|$, and matrix $C \in \mathbf{CP}_{m-1}(\{0,1\})$, the matrix
$$ B = \begin{pmatrix} s_2 & s_2 \mathbf{1}^T \\ s_2 \mathbf{1} & s_2 \mathbf{1}\mathbf{1}^T + (s_1 - s_2) C \end{pmatrix}$$
is in $\mathbf{CP}_{m}(S)$ and satisfies $B^{(2)} = (s_1 - s_2) C$. Therefore, we may assume that $S = \{0,1\}$ at the cost of one step of Gaussian elimination and a multiplicative factor of $\text{diam}(S)/\max_{s\in S} |s|$ in the growth factor. However, we would like a matrix with entries in $\{0,1/2,1\}$. To do so, we note that three steps of Gaussian elimination applied to the $(m-1) \times (m-1)$ block matrix
$$C = \begin{pmatrix} 1 & 1 & 0 & \mathbf{0}^T & 0 \\ 1 & 0 & 1 & \mathbf{0}^T & 0 \\ 0 & 1 & 1 & \mathbf{0}^T & 1 \\ \mathbf{0} & \mathbf{0} & x & E & y \end{pmatrix}$$
where $x \in \{0,1\}^{m-4}$, $E \in \{0,1\}^{(m-4)\times (m-5)}$, and $y \in \{0,1\}^{m-4}$, produces a $(m-4)\times (m-4)$ matrix with its first $m-5$ columns given by $E$ and its last column given by $y-x/2$. Performing this operation $\ell$ times produces a $\ell \times \ell$ $\{0,1/2,1\}$ matrix, where $\ell$ must be such that $4\ell +1 \le m$. We are now prepared to approximate an arbitrary matrix $A \in \mathbf{CP}_{n}(\mathbb{R})$ using matrices in $\mathbf{CP}_\ell(\{0,1/2,1\})$. Suppose (w.l.o.g.) that $a_{1,1} = 1$, and let $r_{i,j,k}$ denote the $k^{th}$ bit in the binary expansion of $\text{ceil}(a_{i,j})-a_{i,j}$ (we write $-1$ as $-0.\bar{1}$ in binary), and set $r_{i,j,0}$ to be the integer part of $a_{i,j}$ (i.e., either $0$ or $1$). To obtain an approximation of $A$ of order $2^{-3n}$, we set $\ell = 3n^2 +n$ and define $E$ as follows
$$ E = \begin{pmatrix} I & \mathbf{0} & \cdots & \mathbf{0} & \tfrac{1}{2} I \\
\tfrac{1}{2} I & I &\ddots & \vdots & \tfrac{1}{2} I \\ \vdots & \ddots & \ddots &  \mathbf{0} & \vdots \\ \tfrac{1}{2} I & \cdots & \tfrac{1}{2} I & I & \tfrac{1}{2} I \\ R_{1} & R_2 & \cdots & R_{3n} & R_0 
\end{pmatrix},$$
where each block is $n \times n$, and $R_k = (r_{i,j,k})_{i,j=1}^n$ for $k = 0,1,...,3n$. After $n$ steps of Gaussian elimination, we have
$$ E^{(n+1)} = \begin{pmatrix} I & \mathbf{0} & \cdots & \mathbf{0} & \tfrac{1}{4} I \\
\tfrac{1}{2} I & I &\ddots & \vdots & \tfrac{1}{4} I \\ \vdots & \ddots & \ddots &  \mathbf{0} & \vdots \\ \tfrac{1}{2} I & \cdots & \tfrac{1}{2} I & I & \tfrac{1}{4} I \\ R_{2} & R_3 & \cdots & R_{3n} & R_0-\tfrac{1}{2} R_1 
\end{pmatrix},$$
and finally, after $3n^2$ steps we have that $E^{(3n^2+1)} = R_0 - \tfrac{1}{2} R_1 - \tfrac{1}{4} R_2 - ... - \tfrac{1}{2^{3n}} R_{3n}$ and approximates $A$ up to error $2^{-3n}$. We have $\ell = 3n^2 +n$ and require $4\ell +1 \le m$, so we set $m = 4n(3n+1)+1$.
\end{proof}

A similar result (with a worse multiplicative constant) holds for $\mathbb{C}$ given a set $S$ which either contains $\{0,1,i\}$, or can be converted to such a set after relatively few iterates of Gaussian elimination (e.g., $\{-1,1,1+i\}$). We leave the details to the motivated reader.

\section{Growth Factor in Floating Point Arithmetic}\label{sec:float}

In this section, we aim to bound the growth factor encountered in practice in floating point arithmetic. The term ``growth factor'' in the literature 
is used ambiguously to refer to two closely
related quantities: growth factor under exact arithmetic or under
floating point arithmetic, leading to some confusion. The exact case is clear, and shows up in theoretical discussions.  The floating point arithmetic case,
by contrast refers to the largest element (in absolute value) seen during a floating point computation. As previously mentioned in Section \ref{sec:intro}, error estimates for Gaussian elimination typically involve the growth factor under floating point arithmetic rather than exact arithmetic. In this section, we show that when using sufficiently high precision ($\omega(\log^2 n)$ bits), the maximum growth factor for exact and floating point arithmetic are identical up to a $1+o(1)$ multiplicative factor (Theorem \ref{thm:float}).

We consider the maximum growth factor when performing Gaussian elimination in base $\beta$ with $t$ digits of precision. For simplicity, we ignore issues of overflow and underflow. Here, we focus exclusively on real-valued matrices, but the analogous theorem for complex matrices follows quickly from the below analysis by simply adjusting the error due to multiplication and division for a given base and mantissa. We leave further details to the interested reader. Under floating point arithmetic, the procedure of Gaussian elimination is given by
\begin{align*}
    \hat a_{i,j}^{(1)}&:= a_{i,j} (1+\phi_{i,j}^{(0)}) \qquad \qquad \qquad \qquad \qquad \quad \; \; \text{for} \quad i,j = 1,...,n, \\
    \hat a^{(k+1)}_{i,j} &:= \big[\hat a^{(k)}_{i,j} - s_{i,k}  \hat a^{(k)}_{k,j} (1+\theta_{i,j}^{(k)}) \big](1+\phi_{i,j}^{(k)}) \quad \text{for} \quad   i,j = k,...,n, \; k = 1,...,n-1.
\end{align*}
where
$$s_{i,k} =  \frac{\hat a^{(k)}_{i,k} }{\hat a^{(k)}_{k,k}}(1+ \varphi_{i,k} ),$$
and $|\theta_{i,j}^{(k)}|,|\phi_{i,j}^{(k)}|,|\varphi_{i,k}| \le u:= \beta^{1-t}/2$ for all $i,j,k$ ($u$ is commonly referred to as the unit round-off). When partial pivoting is employed, we may assume that $|s_{i,k}| \le 1$ and $|s_{i,k} (1+\theta_{i,j}^{(k)})| \le 1$ for all $i,j,k$. Similar to the sets $\mathbf{CP}_n(S)$ and $\mathbf{PP}_n(S)$ defined in Section \ref{sec:intro}, we define 
\begin{align*}
    \widehat{\mathbf{CP}}_n(S) &= \{ A \in \textbf{GL}_n(\mathbb{C}) \cap S^{n \times n} \, | \, \hat a_{k,k}^{(k)} \ne 0 \text{ for all }k, \, |\hat a_{i,j}^{(k)}| \le |\hat a_{k,k}^{(k)}| \text{ for all } i,j \ge k \}, \\
    \widehat{\mathbf{PP}}_n(S) &= \{ A \in \textbf{GL}_n(\mathbb{C}) \cap S^{n \times n} \, | \, \hat a_{k,k}^{(k)} \ne 0 \text{ for all }k, \, |\hat a_{i,k}^{(k)}| \le |\hat a_{k,k}^{(k)}| \text{ for all } i \ge k \}.
\end{align*}
To avoid a proliferation of indices, here and in what follows the dependence of the above sets and the growth factor on $\beta$ and $t$ is implicit. We note that, for any partially pivoted matrix, we may assume that $|s_{i,k}| \le 1$ for all $i,k$. The growth factor under finite arithmetic is denoted by
$$G(A) := \frac{\max_{i,j,k} |\hat a_{i,j}^{(k)}|}{\max_{i,j} |\hat a^{(1)}_{i,j}|},$$
and we define $G[\mathbf{X}]$ to be the maximum growth factor under finite arithmetic (with base $\beta$ and length $t$ mantissa) over all matrices in $\mathbf{X}$. The quantity $G[\mathbf{X}]$ is a key ingredient in stability theorems of Gaussian elimination (see \cite[Theorem 2.6]{isaacson2012analysis} or \cite[Theorem 9.5]{higham2002accuracy}). In general, the best known bounds for partial, rook, and complete pivoting is given by
\begin{equation}\label{eqn:finite_trivial}
    G(A) \le \big[ 1 + (1 + u)^2\big]^{n-1} = 2^{n-1} + O(n u),
\end{equation}
and when $\beta =2$, this bound can simply be replaced by $2^{n-1}$ (see \cite[Section 1.2]{isaacson2012analysis} for details). For rook and complete pivoting, $2^{n-1}$ is much more pessimistic than Inequalities \ref{eqn:wilk} and \ref{eqn:rook_bound} for exact arithmetic. 

As the mantissa length $t$ tends to infinity, intuitively, the maximum growth factor under floating point arithmetic will converge to its exact arithmetic counterpart. However, given a single matrix, the growth factor in floating point can be very different from exact arithmetic due to ``near ties" causing the elimination to follow a different branch. That branch, however, is the exact branch of some nearby matrix, as the following lemma illustrates (for partial pivoting).

\begin{lemma}\label{lm:float}
For every $A \in \widehat{\mathbf{PP}}_n(\mathbb{R})$, there exists a matrix $B \in \mathbf{PP}_n(\mathbb{R})$ with $b_{i,j}^{(k)} = \hat a_{i,j}^{(k)}$ for $i =k$ or $j = k$, and
$$\big| \hat a_{i,j}^{(k)} - b_{i,j}^{(k)} \big| \le  u \, \sum_{\ell = k}^{\min\{i,j\}-1}\bigg[  |\hat a^{(\ell)}_{i,j}| +   |\hat a^{(\ell)}_{\ell,j}| (3 + u ) \bigg]$$
for all $i,j = k,...,n$ and $k = 1,...,n-1$.
\end{lemma}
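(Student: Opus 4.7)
The natural strategy is to build $B$ backwards from the last step of Gaussian elimination so that the exact Schur complements of $B$ are forced, by construction, to have the same pivot rows and columns as the floating-point iterates of $A$. Concretely, set $B^{(n)} := \hat a_{n,n}^{(n)}$ and, for $k$ decreasing from $n-1$ to $1$, define $B^{(k)}$ by declaring its pivot row and column to equal those of $\hat A^{(k)}$, i.e. $B^{(k)}_{i,k} := \hat a^{(k)}_{i,k}$ and $B^{(k)}_{k,j} := \hat a^{(k)}_{k,j}$ for $i,j \ge k$, and filling in the interior by
$$B^{(k)}_{i,j} := B^{(k+1)}_{i,j} + \frac{\hat a^{(k)}_{i,k} \hat a^{(k)}_{k,j}}{\hat a^{(k)}_{k,k}} \qquad (i,j > k).$$
Finally take $B := B^{(1)}$. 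By design, the exact Schur complement of $B^{(k)}$ at its $(k,k)$-pivot is exactly $B^{(k+1)}$, so the matrices $B^{(k)}$ really are the iterates of exact Gaussian elimination on $B$, and the identity $b^{(k)}_{i,j} = \hat a^{(k)}_{i,j}$ holds automatically for $i=k$ or $j=k$.

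Membership $B \in \mathbf{PP}_n(\mathbb{R})$ is then almost free: the pivots $B^{(k)}_{k,k} = \hat a^{(k)}_{k,k}$ are nonzero because $A \in \widehat{\mathbf{PP}}_n(\mathbb{R})$, and $|B^{(k)}_{i,k}| = |\hat a^{(k)}_{i,k}| \le |\hat a^{(k)}_{k,k}| = |B^{(k)}_{k,k}|$ gives the partial-pivoting inequality. Nonzero pivots along the whole elimination also give invertibility of $B$.

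The error bound is proved by downward induction on $k$ (for fixed $i,j$, starting from $k = \min\{i,j\}$, where both sides are zero). The single-step estimate is the computational core: expanding the floating-point recursion, one obtains
$$\hat a_{i,j}^{(k+1)} - \Big(\hat a_{i,j}^{(k)} - \tfrac{\hat a_{i,k}^{(k)} \hat a_{k,j}^{(k)}}{\hat a_{k,k}^{(k)}}\Big) = \hat a_{i,j}^{(k)} \phi_{i,j}^{(k)} - s_{i,k} \hat a_{k,j}^{(k)}\big(\theta_{i,j}^{(k)} + \phi_{i,j}^{(k)} + \theta_{i,j}^{(k)}\phi_{i,j}^{(k)}\big) - \frac{\hat a_{i,k}^{(k)}}{\hat a_{k,k}^{(k)}} \varphi_{i,k} \hat a_{k,j}^{(k)}.$$
Using $|s_{i,k}|, |\hat a_{i,k}^{(k)}/\hat a_{k,k}^{(k)}| \le 1$ from partial pivoting together with the unit round-off bounds on $\theta,\phi,\varphi$, the right-hand side is at most $u\,|\hat a_{i,j}^{(k)}| + u(3+u)\,|\hat a_{k,j}^{(k)}|$. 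Combining this with the telescoping identity $B^{(k)}_{i,j} - \hat a^{(k)}_{i,j} = (B^{(k+1)}_{i,j} - \hat a^{(k+1)}_{i,j}) + (\hat a^{(k+1)}_{i,j} - \hat a^{(k)}_{i,j} + \hat a^{(k)}_{i,k}\hat a^{(k)}_{k,j}/\hat a^{(k)}_{k,k})$ and the inductive bound at level $k+1$ yields the claimed sum.

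The main obstacle is purely bookkeeping rather than conceptual: one must group the products $(1+\theta)(1+\phi)$ and isolate the $\varphi$ perturbation hidden in $s_{i,k}$ so that the leftover error factors as exactly $u\,|\hat a^{(k)}_{i,j}| + u(3+u)\,|\hat a^{(k)}_{k,j}|$ — the constant $3+u$ comes from $\theta + \phi + \theta\phi$ plus one more $\varphi$ term, which is where partial pivoting (giving $|\hat a^{(k)}_{i,k}/\hat a^{(k)}_{k,k}|\le 1$) is essential. Everything else is a straightforward induction that plugs the per-step estimate into the recursion for $B^{(k)}$.
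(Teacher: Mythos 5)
Your construction of $B$ (pivot rows/columns copied from $\hat A^{(k)}$, interior defined so the Schur complements telescope) is exactly the paper's construction, and your single-step identity together with the bounds $|s_{i,k}|,|\hat a_{i,k}^{(k)}/\hat a_{k,k}^{(k)}|\le 1$ reproduces the paper's unrolled sum and the $u\,|\hat a^{(k)}_{i,j}| + u(3+u)|\hat a^{(k)}_{k,j}|$ per-step estimate. This is the same proof; the only presentational difference is that you package the telescoping as a downward induction while the paper writes out the full sum directly.
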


\begin{proof}
The main idea is to iteratively update the lower right block of each matrix $\hat A^{(k)}$ so that successive matrices agree exactly, i.e., $B^{(k+1)} = B^{(k)}_{k+1:n} - B^{(k)}_{k+1:n,k} B^{(k)}_{k,k+1:n} / b^{(k)}_{k,k}$. To this end, we iteratively define $B$ so that $B^{(n)} = \hat A^{(n)}$ and
$$ B^{(k)} = \begin{pmatrix} \hat a_{k,k}^{(k)} & \hat A^{(k)}_{k,:} \\ \hat A^{(k)}_{:,k} & B^{(k+1)} + \hat A^{(k)}_{k+1:n,k} \hat A^{(k)}_{k,k+1:n} / \hat a^{(k)}_{k,k} \end{pmatrix} \quad \text{for } k = 1,...,n-1.   $$
Clearly, successive iterates of $B$ agree with each other, and $b_{i,j}^{(k)} = \hat a_{i,j}^{(k)}$ for $i = k$ or $j = k$. What remains is to bound the error in the lower right block. Consider the entry $b_{i,j}^{(k)}$, where $i,j>k$ and let $m = \min\{i,j\}$. We have
\begin{align*}
    b_{i,j}^{(k)} &= \hat a_{i,j}^{(m)} + \sum_{\ell = k}^{m-1} \hat a_{i,\ell}^{(\ell)} \hat a_{\ell,j}^{(\ell)}/ \hat a_{\ell, \ell}^{(\ell)} \\
    &= \big[\hat a^{(m-1)}_{i,j} - s_{i,m-1}  \hat a^{(m-1)}_{m-1,j} (1+\theta_{i,j}^{(m-1)}) \big](1+\phi_{i,j}^{(m-1)}) \\ &\quad + \hat a^{(m-1)}_{m-1,j} \big(s_{i,m-1}  - \varphi_{i,m-1} \hat a_{i,m-1}^{(m-1)} / \hat a_{m-1, m-1}^{(m-1)}\big) + \sum_{\ell = k}^{m-2} \hat a_{i,\ell}^{(\ell)} \hat a_{\ell,j}^{(\ell)}/ \hat a_{\ell, \ell}^{(\ell)} \\
   &= \bigg[\phi_{i,j}^{(m-1)} \hat a^{(m-1)}_{i,j} -  s_{i,m-1}  \hat a^{(m-1)}_{m-1,j} (\theta_{i,j}^{(m-1)} + \phi_{i,j}^{(m-1)} + \theta_{i,j}^{(m-1)}\phi_{i,j}^{(m-1)})  \\
    &\quad - \hat a^{(m-1)}_{m-1,j}  \varphi_{i,m-1} \hat a_{i,m-1}^{(m-1)} / \hat a_{m-1, m-1}^{(m-1)}\bigg]  + \bigg[\hat a_{i,j}^{(m-1)} + \sum_{\ell = k}^{m-2} \hat a_{i,\ell}^{(\ell)} \hat a_{\ell,j}^{(\ell)}/ \hat a_{\ell, \ell}^{(\ell)} \bigg].
\end{align*}
Repeating this procedure, we have
$$ b_{i,j}^{(k)} = \hat a_{i,j}^{(k)} + \sum_{\ell = k}^{m-1}\bigg[ \phi_{i,j}^{(\ell)} \hat a^{(\ell)}_{i,j} -  s_{i,\ell}  \hat a^{(\ell)}_{\ell,j} (\theta_{i,j}^{(\ell)} + \phi_{i,j}^{(\ell)} + \theta_{i,j}^{(\ell)}\phi_{i,j}^{(\ell)}) - \hat a^{(\ell)}_{\ell,j}  \varphi_{i,\ell} \hat a_{i,\ell}^{(\ell)} / \hat a_{\ell, \ell}^{(\ell)}\bigg].$$
Because our matrix is partially pivoted, $|s_{i,\ell}|$ and $|\hat a_{i,\ell}^{(\ell)}|/|\hat a_{\ell, \ell}^{(\ell)}|$ are at most one, and so
$$\big| \hat a_{i,j}^{(k)} - b_{i,j}^{(k)} \big| \le u \sum_{\ell = k}^{\min\{i,j\}-1}\bigg[  |\hat a^{(\ell)}_{i,j}| +   |\hat a^{(\ell)}_{\ell,j}| (3 + u) \bigg].$$
\end{proof}

By combining the above lemma with Lemma \ref{lm:cp_eps_stable}, we obtain a bound on growth factor for complete pivoting.

\begin{theorem}\label{thm:float}
Let $0<C<1$ and 
$$t \ge 1 +  \log_\beta\bigg[ \frac{(1+C)(4+5C)}{C} \sum_{m=1}^{n-1} \sum_{\ell=1}^{n-m} g\big[\mathbf{CP}_\ell(\mathbb{R})\big] g\big[\mathbf{CP}_m(\mathbb{R})\big] \bigg].$$
Then $G\big[\widehat{\mathbf{CP}}_n(\mathbb{R})\big] \le (1+C) \,  g\big[\mathbf{CP}_n(\mathbb{R})\big]$.
\end{theorem}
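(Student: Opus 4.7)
The plan is to reduce the finite-precision quantity $G[\widehat{\mathbf{CP}}_n(\mathbb{R})]$ to the exact-arithmetic $g[\mathbf{CP}_n(\mathbb{R})]$ by a two-step deformation of $A \in \widehat{\mathbf{CP}}_n(\mathbb{R})$: first to a partial-pivoting matrix $B \in \mathbf{PP}_n(\mathbb{R})$ via Lemma \ref{lm:float}, then to a completely-pivoted matrix $\widetilde B \in \mathbf{CP}_n(\mathbb{R})$ via Lemma \ref{lm:cp_eps_stable}, followed by the pivot-scaling computation from the proof of Corollary \ref{cor:cp_stab}. Writing $P_k = |\hat a^{(k)}_{k,k}|$ for the floating-point pivots of $A$, the complete-pivoting constraint yields $G(A) = \max_k P_k/P_1$; replacing $A$ with its leading $k^\ast \times k^\ast$ principal submatrix (which again sits in $\widehat{\mathbf{CP}}_{k^\ast}(\mathbb{R})$, since the pivot constraints are preserved under restriction) reduces matters to $G(A) = P_n/P_1$ without loss of generality.

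Lemma \ref{lm:float} provides $B \in \mathbf{PP}_n(\mathbb{R})$ with $b^{(k)}_{k,k} = P_k$ and $|b^{(k)}_{i,j}-\hat a^{(k)}_{i,j}| \le u(4+u)\sum_{\ell=k}^{\min\{i,j\}-1} P_\ell$, once the complete-pivoting bound $|\hat a^{(\ell)}_{i,j}| \le P_\ell$ is substituted into its estimate. This places $B$ in $\mathbf{CP}^{\bm{\varepsilon}}_n(\mathbb{R})$ with $\varepsilon_k = u(4+u)\sum_{\ell=k}^{n-1} P_\ell/P_k$, so Lemma \ref{lm:cp_eps_stable} with $\bm{\delta} = \mathbf{0}$ yields $\widetilde B \in \mathbf{CP}_n(\mathbb{R})$ whose pivots are $Q_k = (1+\gamma_k) P_k$. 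In particular, $\widetilde b^{(n)}_{n,n} = P_n$ and $\widetilde b_{1,1} = (1+\gamma_1)P_1$, so exactly as in the proof of Corollary \ref{cor:cp_stab},
$$G(A) = \frac{P_n}{P_1} = (1+\gamma_1)\frac{P_n}{\widetilde b_{1,1}} \le (1+\gamma_1)\, g(\widetilde B) \le (1+\gamma_1)\, g\big[\mathbf{CP}_n(\mathbb{R})\big].$$
It thus suffices to show $\gamma_1 \le C$.

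The key insight, and the source of the double-sum form of the theorem's hypothesis, is a telescoping sub-matrix estimate for $P_{m+\ell'-1}/P_1$. The leading $m \times m$ principal submatrix of $\widetilde B$ is in $\mathbf{CP}_m(\mathbb{R})$ with pivots $Q_1,\ldots,Q_m$, giving $Q_m/Q_1 \le g[\mathbf{CP}_m(\mathbb{R})]$; the sub-matrix of $\widetilde B^{(m)}$ on rows and columns $m,\ldots,m+\ell'-1$ is in $\mathbf{CP}_{\ell'}(\mathbb{R})$ with pivots $Q_m,\ldots,Q_{m+\ell'-1}$, giving $Q_{m+\ell'-1}/Q_m \le g[\mathbf{CP}_{\ell'}(\mathbb{R})]$. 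Multiplying these bounds, the $(1+\gamma_m)$ factors cancel and discarding $1/(1+\gamma_{m+\ell'-1}) \le 1$ produces
$$\frac{P_{m+\ell'-1}}{P_1} \le (1+\gamma_1)\, g\big[\mathbf{CP}_{\ell'}(\mathbb{R})\big]\, g\big[\mathbf{CP}_m(\mathbb{R})\big].$$
Inserting this into Lemma \ref{lm:cp_eps_stable}'s bound for $\gamma_1 P_1$, using $(1+\varepsilon_\ell)^2-1 \le (2+\varepsilon_\ell)\varepsilon_\ell$ and the uniform bound $\varepsilon_\ell \le C$, yields
$$\gamma_1 \le (2+C)\, u(4+u)\, (1+\gamma_1) \sum_{m=1}^{n-1}\sum_{\ell=1}^{n-m} g\big[\mathbf{CP}_\ell(\mathbb{R})\big]\, g\big[\mathbf{CP}_m(\mathbb{R})\big].$$
Substituting $u \le C/\bigl[2(1+C)(4+5C)\mathcal{S}\bigr]$ (where $\mathcal{S}$ is the displayed double sum) turns this into a linear fixed-point inequality in $\gamma_1$; the algebraic identity $(2+C)(4+u) \le 2(4+5C)$, which holds whenever $u \le 6C/(2+C)$ and hence automatically in the theorem's range, closes it up to give $\gamma_1 \le C$.

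The main obstacle is ensuring that only one factor of $(1+\gamma_1)$ appears in $P_{m+\ell'-1}/P_1$: a crude two-hop bound would produce $(1+\gamma_1)^2$ and spoil the fixed-point computation, and the telescoping argument above is precisely what avoids this. A secondary technicality is the uniform bound $\varepsilon_\ell \le C$ needed to linearize $(1+\varepsilon_\ell)^2-1$; this follows by applying the same sub-matrix argument directly to $\varepsilon_\ell$ to obtain $\varepsilon_\ell \le u(4+u)(1+\gamma_\ell)\mathcal{S}$, and then invoking a simultaneous fixed-point argument for all $\gamma_\ell$ at once (or a downward induction on $\ell$ from the trivial base case $\gamma_{n-1} \le (1+u(4+u))^2-1$), which secures all the required bounds in tandem with $\gamma_1 \le C$.
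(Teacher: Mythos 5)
Your high-level strategy matches the paper's: use Lemma \ref{lm:float} to pass from $A \in \widehat{\mathbf{CP}}_n(\mathbb{R})$ to $B \in \mathbf{PP}_n(\mathbb{R})$, observe $B \in \mathbf{CP}^{\bm\varepsilon}_n(\mathbb{R})$ with $\varepsilon_k$ proportional to $u$, then use Lemma \ref{lm:cp_eps_stable} to land in $\mathbf{CP}_n(\mathbb{R})$ with only a $(1+\gamma_1)$ loss. Your telescoping sub-matrix bound $P_{m+\ell'-1}/P_1 \le (1+\gamma_1)\,g\big[\mathbf{CP}_{\ell'}\big]\,g\big[\mathbf{CP}_m\big]$ is correct and a clean way to see where the double sum $\mathcal{S}$ comes from, and the algebra closing the linear fixed-point in $\gamma_1$ to $\gamma_1 \le C$ under the stated $t$-condition checks out.

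The gap is exactly where you flag it as a ``secondary technicality,'' and it is not secondary. You need the uniform bound $\varepsilon_\ell \le C$ to linearize $(1+\varepsilon_\ell)^2 - 1$. Your bound $\varepsilon_\ell \le u(4+u)(1+\gamma_\ell)\,\mathcal{T}_\ell$ re-introduces $\gamma_\ell$ on the right, and every route you propose to close this runs into the same circularity. The downward induction on $\ell$ does not propagate: at step $k$ the recursion $\gamma_k P_k = \max\{[(1+\varepsilon_k)^2-1]P_k,\ \varepsilon_k P_k + \gamma_{k+1}P_{k+1}\}$ involves $\varepsilon_k$, whose bound involves $\gamma_k$ itself, and the inductive hypothesis on $\gamma_{k+1}$ also fails to help because $\gamma_{k+1}P_{k+1}/P_k$ can exceed $\gamma_{k+1}$ by a factor as large as $\approx 2$ (the right invariant is $\gamma_k P_k$, which is monotone, not $\gamma_k$). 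The ``simultaneous fixed point'' is likewise circular: assuming $\max_\ell\gamma_\ell \le C$ to conclude $\max_\ell\gamma_\ell \le C$ proves nothing. What actually remains is a quadratic inequality $\gamma_k \le d(1+\gamma_k)\big(2 + d(1+\gamma_k)\big)$ with $d := u(4+u)\mathcal{T}_k$, whose solution set is \emph{disconnected}: $\gamma_k \le \gamma_- \approx 2d$ or $\gamma_k \ge \gamma_+ \approx 1/d^2$. Ruling out the large branch would need an a priori bound $\gamma_k < \gamma_+$, and the only bounds available at this point are exponential in $n$ (via $P_p/P_\ell \lesssim 2^{p-\ell}$), which do \emph{not} stay below $1/d^2 \approx 1/(u\mathcal{S})^2$ under the theorem's $t$-condition. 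A continuity-in-$u$ argument also fails cleanly because the floating-point iterates $\hat a^{(k)}_{i,j}$ are not continuous in $u$.

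The paper avoids all of this by a different closing move: it bounds $\varepsilon_k \le (4+u)u\sum_{p=1}^{n-k} G\big[\widehat{\mathbf{CP}}_p(\mathbb{R})\big]$ purely in terms of \emph{finite-precision} growth factors of \emph{smaller} dimension, which are fixed numbers with no $\gamma$-dependence, and then does an induction on $n$ (showing $G(k) \le (1+C)g(k)$ for $k = 1,\dots,n$) so that the induction hypothesis controls exactly those quantities. Your direct route is a genuinely different (and tidier-looking) fixed-point formulation, but as written it does not establish the needed uniform bound on the $\varepsilon_\ell$, and the obstruction is structural rather than cosmetic.
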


\begin{proof}
Suppose $A \in \widehat{\textbf{CP}}_n(\mathbb{R})$ maximizes growth, i.e., $G(A) = G\big[\widehat{\textbf{CP}}_n(\mathbb{R})]$, and let $B \in \mathbf{PP}_n(\mathbb{R})$ be a matrix satisfying $b_{k,k}^{(k)} = \hat a_{k,k}^{(k)}$ for all $k$, and the bounds of Lemma \ref{lm:float}. Then $B \in \textbf{CP}^{\bm{\varepsilon}}_n(\mathbb{R})$, $\bm{\varepsilon}=(\varepsilon_1,...,\varepsilon_{n-1})$, for $\epsilon_k := (4+u)u \sum_{\ell = 1}^{n-k} G\big[\widehat{\textbf{CP}}_\ell(\mathbb{R})]$, as
  $$  | b_{i,j}^{(k)}| \le | \hat a_{i,j}^{(k)} | + u \, \sum_{\ell = k}^{n-1}\bigg[  |\hat a^{(\ell)}_{i,j}| +   |\hat a^{(\ell)}_{\ell,j}| (3 + u ) \bigg] \le | b_{k,k}^{(k)} | \bigg(1 + u \, \sum_{\ell = k}^{n-1} \frac{|\hat a^{(\ell)}_{i,j}|}{| \hat a_{k,k}^{(k)} |} +   \frac{|\hat a^{(\ell)}_{\ell,j}|}{|\hat a_{k,k}^{(k)} |} (3 + u )  \bigg).$$
In addition, $G\big[\widehat{\textbf{CP}}_n(\mathbb{R})] = G(A) = |b_{n,n}^{(n)}|/|b_{1,1}|$. Using Lemma \ref{lm:cp_eps_stable} applied to $B$, we can find a matrix $C \in g\big[\mathbf{CP}_n(\mathbb{R})\big]$ that satisfies $b_{n,n}^{(n)} = c_{n,n}^{(n)}$ and $$|c_{1,1}| =  \bigg(1 + \max_{\ell \le n-1} \varepsilon_\ell(2+\varepsilon_\ell) | \hat a_{\ell,\ell}^{(\ell)} |/|\hat a_{1,1}|+ \sum_{m=1}^{\ell-1} \varepsilon_m | \hat a_{m,m}^{(m)}|/|\hat a_{1,1}|\bigg).$$
For the sake of space, we define $\gamma:=(4+u)u$, $g(n):= g\big[\mathbf{CP}_n(\mathbb{R})\big]$, and $G(n):=G\big[\widehat{\textbf{CP}}_n(\mathbb{R})]$, and note that
\begin{align}
    G(n) &\le g(n) \bigg( 1 + \max_{\ell \le n-1} \varepsilon_\ell(2+\varepsilon_\ell) G(\ell)+ \sum_{m=1}^{\ell-1} \varepsilon_m G(m) \bigg) \nonumber  \\
    &\le g(n) \bigg( 1 + \gamma \, \max_{\ell \le n-1} G(\ell) \bigg[1+\gamma \sum_{p=1}^{n-\ell} G(p)\bigg] \sum_{p=1}^{n-\ell} G(p) + \sum_{m=1}^{\ell} \sum_{p=1}^{n-m} G(p) G(m) \bigg) \label{ineq:float} \\
    &\le g(n) \bigg( 1 + \gamma \,  \bigg[2+\gamma \sum_{\ell=1}^{n-1} G(\ell) \bigg] \sum_{m=1}^{n-1} \sum_{p=1}^{n-m} G(p) G(m) \bigg) \nonumber.
\end{align}
The result follows from noting that if 
\begin{equation}\label{ineq:assumpt}
\frac{1}{\gamma} \ge \frac{(1+C)^2}{C} \big(2 + C/2(1+C)\big) \sum_{m=1}^{n-1} \sum_{\ell=1}^{n-m} g(\ell) g(m)
\end{equation}
for some $C>0$, then $G(k) \le (1+C) g(k)$ for all $k = 1,...,n$. Indeed, we have $G(1) = g(1)$, and, assuming $G(\ell) \le (1+C) g(\ell)$ for $\ell = 1,...,k$,
\begin{align*}
    \frac{G(k+1)}{g(k+1)} &\le 1 + \gamma (1+C)^2 \,  \bigg[2+\gamma (1+C) \sum_{\ell=1}^{n-1} g(\ell) \bigg] \sum_{m=1}^{n-1} \sum_{p=1}^{n-m} g(p) g(m) \\
    &\le  1 + \gamma (1+C)^2 \,  \bigg[2+\frac{C}{2(1+C)}\bigg] \sum_{m=1}^{n-1} \sum_{p=1}^{n-m} g(p) g(m)  \le 1+C.
\end{align*}
\end{proof}

The above theorem is incredibly pessimistic, but nevertheless still provides useful some useful information. First, by using Wilkinson's bound, we note that $G\big[\widehat{\mathbf{CP}}_n(\mathbb{R})\big] \le (1+1/\text{Poly(n)}) \,  g\big[\mathbf{CP}_n(\mathbb{R})\big]$ for $t = \omega(\log_\beta^2(n))$, and, under the assumption that $g\big[\mathbf{CP}_n(\mathbb{R})\big]$ is bounded by a polynomial, only a $t = \omega(\log_\beta(n))$ length mantissa is required. In Table \ref{tab:prec}, we include a number of possible bounds on the growth factor (including Wilkinson's Inequality \ref{eqn:wilk}), and list lower bounds on the largest value of $n$ for which Theorem \ref{thm:float} guarantees that $G\big[\widehat{\mathbf{CP}}_n(\mathbb{R})\big] \le (3/2) \,  g\big[\mathbf{CP}_n(\mathbb{R})\big]$.

\begin{table}[t]
\caption {\label{tab:prec} Lower bounds for the largest $n$ satisfying the conditions of Theorem \ref{thm:float} for IEEE 754 Double and Quadruple precision, with $C = 1/2$ (i.e., $G\big[\widehat{\mathbf{CP}}_n(\mathbb{R})\big] \le (3/2) \,  g\big[\mathbf{CP}_n(\mathbb{R})\big]$) under various assumptions on upper bounds for $g\big[\mathbf{CP}_n(\mathbb{R})\big]$.} 

\begin{tabular}{r | r | r  }
$g\big[\mathbf{CP}_n(\mathbb{R})\big] \le ...$ & Double Prec. ($t=52$) & Quadruple Prec. ($t=112$)   \\ \hline
 $3n$ &  $4188$ & $137266926$ \\  \hline
 $n^2/2$ &  $660$ & $676504$ \\  \hline
 Inequality \ref{eqn:wilk} &  $554$ & $29563$ \\  \hline
\end{tabular}

\end{table}

\section{Computer-Assisted Lower Bounds}\label{sec:numerics}

In this section, we detail lower bounds for growth factor found using computer search, and discuss how such computer-generated matrices in finite arithmetic lead to mathematically provable lower bounds for growth factor in exact arithmetic (Theorem \ref{thm:cp_growth_restate}).

\subsection{Computer-Assisted Lower Bounds for Small Dimension}\label{sec:comp}

We are indebted to the early pioneering numerical optimization given by Day \& Peterson
\cite{day1988growth}   and Gould \cite{gould1991growth}.
We are the beneficiary of more readily usable quality software (JuMP \cite{DunningHuchetteLubin2017}), the ready availability of faster processors, and also modern parallel 
computing.

Our methodology is to run 64 threads each with a random $n \times n$ starting matrix of standard normals
which has rows and columns permuted so that the matrix is completely pivoted. 
We then normalize by dividing by the $(1,1)$ element.
Our optimization
is over the $1+2^2+\ldots+n^2$ elements that are seen by Gaussian Elimination as suggested by Gould \cite{gould1991growth}.
Therefore the starting point requires using all of these $\approx n^3/3$ elements. We store the variables in a 3-d array $x_{i,j,k}, 1\le k \le n,  k \le i,j \le n$. Thus $k=1$ is the original matrix, $k=2$ is the $n-1 \times n-1$ matrix
obtained after one step of Gaussian elimination.

We include a listing of the high level function from the 
\href{https://github.com/alanedelman/CompletePivotingGrowth}{online repository}
\cite{ourrepo}
that performs the optimization in Figure \ref{fig:algorithm}; it is quite easy
to read as it is similar to the mathematics.  We invite readers to note the six lines of code that indicate
the nonlinear constraints  (\verb+@NLconstraint+)  and linear constraints  (\verb+@constraint+), the first one of which is the constraint of Gaussian elimination:

\begin{figure}
{\footnotesize \jlinputlisting{run_model.jl}}
\caption{Our run\_model function performs growth optimization from a random start.  We encourage the reader to examine the constraints: they correspond to the mathematical constraints a completely pivoted matrix satisfies and are easy to read. The variable $x$ is a three-dimensional array that stores the Gaussian elimination
``pyramid,” e.g., $x[(i,j,k)]$ is the $(i,j)^{th}$ entry of the $k^{th}$ step of Gaussian elimination and the $(k,k,k)$ entry is the $k^{th}$ pivot.}\label{fig:algorithm}
\end{figure}

Very importantly we also wish to discuss the line towards the bottom that begins \verb+B = convert_to_cp(Rational+...
as this line turns a floating point answer to a rigorous mathematical answer.
A simple observation is that an output of optimization software is not yet a theoretical lower bound
because of floating point effects.  In particular it is possible that
an output of a program is not completely pivoted.
The examples from Gould \cite{gould1991growth,gould1991private} were close in the floating 
point sense to being optimums and some minor tweaking was needed \cite{edelman1992complete} 
for the purpose of exact mathematics.  In 
 \cite{edelman1992complete}  the first author asked if there would always be a nearby floating point matrix.
 In this paper, we show that Lemma \ref{lm:cp_eps_stable} theoretically states there would be nearby matrix,
 and Algorithm \ref{alg:eps_stable} as embodied in the \verb+convert_to_cp+  function working on a rational
 form allows us to state that our computer assisted solutions constitute exact rigorous mathematics
 rather than a floating point approximation.

For the smaller values of $n$, we tend to believe that the the lower bounds found may well be close to $g\big[\mathbf{CP}_n(\mathbb{R})\big]$
as we have on occasion rerun these values, and found the same answers.  For larger values of $n$, we imagine that
the lower bounds are just that, lower bounds.

\begin{figure}[t]
\begin{centering}
\includegraphics[width=3.5in]{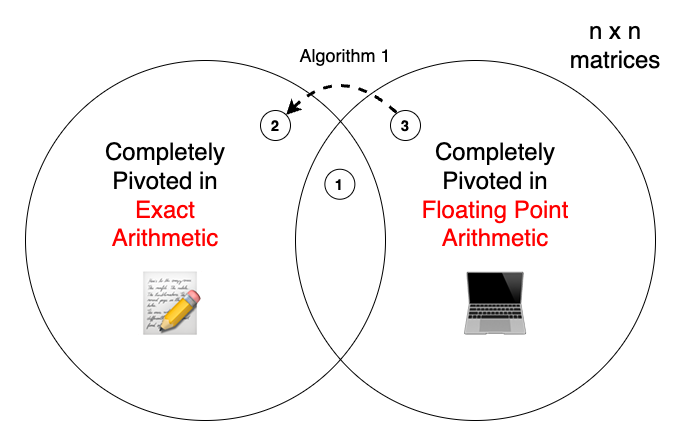}
\caption{The above figure shows that we can go from matrices that are completely pivoted in floating point to matrices that are completely pivoted in exact arithmetic. 
Lemma \ref{lm:cp_eps_stable} proves that this is possible and Algorithm \ref{alg:eps_stable} provides a pseudocode implementation 
(a Julia implementation may be found in the \href{https://github.com/alanedelman/CompletePivotingGrowth}{online repository} 
\cite{ourrepo}). For instance, Algorithm \ref{alg:eps_stable} has fully automated Edelman's exact arithmetic extension of Gould's finite precision counterexample to Conjecture \ref{conj:cryer}, and provides some answers to Edelman's perturbation question for growth factor \cite{edelman1992complete}.}
\label{fig:theorypractice}
\end{centering}
\end{figure}

\subsection{Provable Lower Bounds}
Next, we prove global lower bounds for the growth factor under complete pivoting by converting estimates for growth factor for small $n$ into estimates for all $n$. We begin with the following extrapolation lemma.

\begin{lemma}\label{lm:cp_extrap}
Let $S$ equal $\mathbb{R}$ or $\mathbb{C}$. Then
\begin{enumerate}[(i)]
    \item $g\big[\mathbf{CP}_n(S)\big]$ is non-decreasing,
    \item $g\big[\mathbf{CP}_{2n}(S)\big] \ge 2 \, g\big[\mathbf{CP}_{n}(S)\big]$ for all $n \in \mathbb{N}$,
    \item if $g\big[\mathbf{CP}_n(S)\big] \ge C n$ for $n =k,...,2k-1$, then
$g\big[\mathbf{CP}_n(S)\big] \ge \frac{(1/k; 1/2)_{\infty}}{1-1/k} \, C n $ for all $n \ge k$, where $(\cdot;\cdot)_\infty$ is the q-Pochhammer symbol.
\end{enumerate}
\end{lemma}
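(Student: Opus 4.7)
The plan is to dispatch the three parts in order, with part (ii) serving as the main technical workhorse and parts (i) and (iii) being routine given (ii).

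\textbf{Part (i).} I would prove monotonicity by embedding. For any $A \in \mathbf{CP}_n(S)$, form
\[
A' := \begin{pmatrix} a_{1,1} & \mathbf{0}^\top \\ \mathbf{0} & A \end{pmatrix} \in S^{(n+1)\times(n+1)}.
\]
Since $A$ is CP, $|a_{1,1}|$ is the maximum entry of $A$, so $A'$ is CP with first pivot $a_{1,1}$ and the first elimination step is trivial (zero row and column), leaving the block $A$ intact. Thus the iterates of $A'$ beyond step~1 coincide with those of $A$ and $g(A') = g(A)$, giving $g\bigl[\mathbf{CP}_{n+1}(S)\bigr] \ge g\bigl[\mathbf{CP}_n(S)\bigr]$.

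\textbf{Part (ii).} Given $A \in \mathbf{CP}_n(S)$ with pivots $p_k = a_{k,k}^{(k)}$, I would consider the Kronecker construction
\[
B := \begin{pmatrix} A & -A \\ A & A \end{pmatrix} = H \otimes A, \qquad H = \begin{pmatrix} 1 & -1 \\ 1 & 1 \end{pmatrix},
\]
which has $\max_{i,j}|b_{i,j}| = |a_{1,1}|$ (since entries are $\pm a_{i,j}$ and $A$ is CP), and, writing $A = L_A U_A$, admits the block factorisation
\[
B = \begin{pmatrix} L_A & 0 \\ L_A & L_A \end{pmatrix}\begin{pmatrix} U_A & -U_A \\ 0 & 2U_A \end{pmatrix}
\]
whose upper-triangular factor has diagonal $(p_1,\ldots,p_n,2p_1,\ldots,2p_n)$; in particular the Schur complement of the top-left $A$ is exactly $2A$. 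I would then show that after a suitable symmetric row/column permutation $B$ lies in $\mathbf{CP}_{2n}(S)$ with max pivot $2\max_k|p_k| = 2g(A)\cdot|a_{1,1}|$, hence growth $\ge 2g(A)$. The principal obstacle is verifying the CP condition at each step: if the growth of $A$ is large, the CP ordering on $B$ must interleave eliminations from the ``$A$-side'' and the ``$2A$-side.'' The key analytic input is the identity $2a_{i,j} - a_{i,j}^{(2)} = a_{i,j} + a_{i,1}a_{1,j}/a_{1,1}$, whose modulus is at most $2|a_{1,1}|$ because $A$ is CP; this prevents any entry of any iterate of $B$ from exceeding $2\max_{\ell}|p_\ell|$, and an induction on the step count shows that the running maximum is always attained by an entry of magnitude $|p_\ell|$ or $2|p_\ell|$ for some $\ell$, matching the target pivot sequence.

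\textbf{Part (iii).} Combining (i) and (ii) yields, for every $m$,
\[
g\bigl[\mathbf{CP}_m(S)\bigr] \ge g\bigl[\mathbf{CP}_{2\lfloor m/2\rfloor}(S)\bigr] \ge 2\,g\bigl[\mathbf{CP}_{\lfloor m/2\rfloor}(S)\bigr].
\]
Writing $f(m) := g\bigl[\mathbf{CP}_m(S)\bigr]/m$, this becomes $f(m) \ge \tfrac{2\lfloor m/2\rfloor}{m}f(\lfloor m/2\rfloor) \ge (1 - 1/m)\,f(\lfloor m/2\rfloor)$. For fixed $n \ge k$, set $j := \lceil\log_2(n/(2k-1))\rceil$ so that $n_0 := \lfloor n/2^j\rfloor \in [k,2k-1]$ and $f(n_0)\ge C$; iterating the recurrence $j$ times gives
\[
f(n) \ge C\prod_{i=0}^{j-1}\bigg(1-\frac{1}{\lfloor n/2^i\rfloor}\bigg).
\]
Iterating the bound $\lfloor 2x\rfloor \ge 2\lfloor x\rfloor$ produces $\lfloor n/2^i\rfloor \ge 2^{j-i}k$, so the product is at least $\prod_{\ell=1}^{j}\bigl(1-1/(k\cdot 2^\ell)\bigr)$, which in turn is bounded below by its infinite-$j$ limit $\prod_{\ell=1}^{\infty}\bigl(1-1/(k\cdot 2^\ell)\bigr) = (1/k;1/2)_\infty/(1-1/k)$, completing the proof.
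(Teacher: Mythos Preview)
Your overall strategy matches the paper's, but two points deserve attention.

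\textbf{Part (ii).} The paper uses $A \otimes H_1$ with $H_1 = \bigl(\begin{smallmatrix} 1 & 1 \\ 1 & -1 \end{smallmatrix}\bigr)$ rather than your $H \otimes A$; these differ by the perfect-shuffle permutation, which is exactly the ``suitable symmetric row/column permutation'' you leave unspecified. With $A \otimes H_1$ the CP check becomes a clean two-step induction: one elimination step on $A^{(k)} \otimes H_1$ has pivot $p_k$ and every surviving entry has modulus at most $2|p_k|$ (this is your identity $|a_{i,j} + a_{i,1}a_{1,j}/a_{1,1}| \le 2|a_{1,1}|$, applied at level $k$); a second step has pivot $-2p_k$ and leaves exactly $A^{(k+1)} \otimes H_1$. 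So the pivot sequence is $p_1,-2p_1,p_2,-2p_2,\ldots$ and growth is $2g(A)$ with no reordering to discover. Your sketch has the right analytic ingredient but never names the permutation or closes the induction; switching the Kronecker order removes the obstacle entirely.

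\textbf{Part (iii).} Your choice $j = \lceil \log_2(n/(2k-1)) \rceil$ can overshoot: for $k=2$, $n=7$ it gives $j=2$ and $n_0 = \lfloor 7/4 \rfloor = 1 < k$, breaking the hypothesis $f(n_0)\ge C$ and the chain $\lfloor n/2^i\rfloor \ge 2^{j-i}k$. Taking $j = \lfloor \log_2(n/k) \rfloor$ instead guarantees $n_0 \in [k,2k-1]$, after which your product bound goes through verbatim. The paper runs the same induction bottom-up (doubling the interval $[k,2k)$ outward), arriving at the identical product $\prod_{i\ge 1}\bigl(1-1/(2^ik)\bigr) = (1/k;1/2)_\infty/(1-1/k)$.
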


\begin{proof}
Properties $(i)$ and $(ii)$ follow simply from the operations
$$\begin{pmatrix} 1 & 0_n^T \\ 0_n & A \end{pmatrix} \qquad \text{and} \qquad A \otimes H_1, \text{ where } H_1 := \begin{pmatrix} 1 & 1 \\ 1 & -1 \end{pmatrix},$$
applied to a matrix $A \in \mathbf{CP}_n(S)$, respectively (Property $(ii)$ is also proved in \cite{tornheim1970maximum}).
If $g\big[\mathbf{CP}_n(S)\big] \ge C \, n$ for all $n \in [k,2k)$, then by Properties $(i)$ and $(ii)$,
$$g\big[\mathbf{CP}_{2n+1}(S)\big] \ge g\big[\mathbf{CP}_{2n}(S)\big] \ge 2 C n =\frac{2n}{2n+1} \,  C (2n+1) \ge \frac{2k}{2k+1} \,  C (2n+1)$$
for all $n \in [k,2k)$, i.e., $g\big[\mathbf{CP}_n(S)\big] \ge \frac{2k}{2k+1} \, C n$ for all $n \in [k,4k)$. Repeating this argument, we obtain the lower bound 
$$g\big[\mathbf{CP}_n(S)\big] \ge C n \, \prod_{i=1}^j \frac{2^{i} k}{2^{i}k +1} \ge C n \, \prod_{i=1}^j \bigg( 1 -\frac{1}{2^i k} \bigg) = \frac{(1/k; 1/2)_{j+1}}{1-1/k} \, C n $$
for $n \in \big[k, 2^{j+1} k \big)$, where $(\cdot; \cdot)_j$ is the $q$-Pochhammer symbol. Noting that $(\cdot; \cdot)_j$ is monotonically non-increasing with respect to $j$ for non-negative inputs of magnitude at most one completes the proof of Property $(iii)$.
\end{proof}

Combining Lemma \ref{lm:cp_extrap} with the computer-assisted (and mathematically provable) lower bounds of Table \ref{tab:cp_r} immediately implies a lower bound for all values of $n$.

\begin{theorem}[Restatement of Theorem \ref{thm:cp_growth}]\label{thm:cp_growth_restate}
$g\big[\mathbf{CP}_n(\mathbb{R})\big] \ge 1.0045 \, n$ for all $n > 10$, and $\limsup_n \big(g\big[\mathbf{CP}_n(\mathbb{R})\big] /n\big) \ge 2.525$.
\end{theorem}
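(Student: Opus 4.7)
The plan is to combine the numerically verified entries of Table \ref{tab:cp_r} with the extrapolation machinery of Lemma \ref{lm:cp_extrap}. The argument splits naturally into the linear lower bound (valid for every $n > 10$) and the $\limsup$ lower bound (which only needs one very good data point).

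For the $\limsup$ bound, I would take the largest favorable data point in Table \ref{tab:cp_r} and iterate Lemma \ref{lm:cp_extrap}(ii). Specifically, if Table \ref{tab:cp_r} gives $g\big[\mathbf{CP}_{n_0}(\mathbb{R})\big] \ge c \, n_0$ for some fixed $n_0$, then by repeated doubling $g\big[\mathbf{CP}_{2^j n_0}(\mathbb{R})\big] \ge 2^j g\big[\mathbf{CP}_{n_0}(\mathbb{R})\big] \ge c\, (2^j n_0)$, so the ratio $g_n/n$ is at least $c$ along the subsequence $\{2^j n_0\}_{j \ge 0}$. Using the entry at $n_0=100$, where Table \ref{tab:cp_r} records $g \ge 331.71$, this already gives a ratio of $3.3171$ along an infinite subsequence, comfortably exceeding $2.525$.

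For the linear lower bound $g_n \ge 1.0045\,n$, I would partition $\{n > 10\}$ into a ``small'' range covered directly by Table \ref{tab:cp_r} and a ``large'' tail handled by Lemma \ref{lm:cp_extrap}(iii). First, a direct inspection of Table \ref{tab:cp_r} shows $g_n/n \ge 1.0045$ for every $n \in \{11,12,\ldots,75\} \cup \{100\}$ (the minimum ratio is attained at $n=11$, where $11.05/11 \approx 1.00455$). Second, to cover all $n \ge 76$, I would apply Lemma \ref{lm:cp_extrap}(iii) with a moderately large threshold $k$, for instance $k = 38$: inspecting the ratios $g_n/n$ for $n \in \{38,\ldots,75\}$ yields $C = \min g_n/n \ge 63.18/38 > 1.66$, and a direct computation of the $q$-Pochhammer symbol gives
\[
\frac{(1/38;\,1/2)_\infty}{1-1/38} \;\ge\; 0.97,
\]
so the extrapolated bound $C\,(1/38;1/2)_\infty/(1-1/38) \ge 1.61 > 1.0045$ holds, delivering $g_n \ge 1.0045\,n$ for all $n \ge 38$. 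Intersecting this with the direct table check covers every $n > 10$.

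The main obstacle is cosmetic rather than conceptual: one must pick the window $[k, 2k-1]$ large enough that the extrapolation loss factor $(1/k;1/2)_\infty/(1-1/k)$ does not erode the empirical ratio below $1.0045$, while still being small enough that Table \ref{tab:cp_r} actually contains data for that whole window. Taking $k$ in the range where the empirical ratio already exceeds $1.3$ or so leaves a wide safety margin, making the verification essentially a bookkeeping exercise. The only nontrivial thing to double-check is that the converted-to-rational matrices produced by Algorithm \ref{alg:eps_stable} do realize the tabulated numerical lower bounds in exact arithmetic, but this is exactly what Lemma \ref{lm:cp_eps_stable} and the \texttt{convert\_to\_cp} routine of Section \ref{sec:numerics} are designed to guarantee.
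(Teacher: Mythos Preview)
Your proposal is correct and follows essentially the same approach as the paper: combine the verified entries of Table~\ref{tab:cp_r} with Lemma~\ref{lm:cp_extrap}, using Property~(ii) on the $n=100$ data point for the $\limsup$ bound and Property~(iii) on a window $[k,2k-1]$ for the uniform linear bound. The only difference is bookkeeping: the paper checks $n=11,12,13$ directly and applies Property~(iii) with $k=14$ and $C=1.08$, whereas you choose $k=38$ with a much larger $C$, giving a wider safety margin at the cost of invoking more table entries.
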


\begin{proof}
The lower bound for all $n > 10$ follows from checking $n = 11,12,13$ by hand and applying Property $(iii)$ of Lemma \ref{lm:cp_extrap} to $k = 14$ (with $C = 1.08$). The asymptotic bound follows directly from our lower bound for $n = 100$ combined with Property $(ii)$ of Lemma \ref{lm:cp_extrap}.
\end{proof}

\section{Rook Pivoting}\label{sec:rook}

The majority of this work focuses on complete pivoting, due to its theoretical and practical importance. Rook pivoting by comparison is relatively understudied, yet the quasi-polynomial bound on growth factor combined with a reduced computational complexity compared to complete pivoting in practice makes this an attractive technique. Many of the results of this paper also apply to rook pivoting, sometimes leading to even stronger results. These details are left to the interested reader.

Through a stability lemma, tensor argument, and numerically computed lower bounds for a fixed value of $n$, we provide improved lower bounds for the maximum growth factor with rook pivoting. Let
$$\mathbf{RP}^{\bm{\varepsilon}}_n(S) = \{ A \in \textbf{GL}_n(\mathbb{C}) \cap S^{n \times n} \, | \, |a_{i,k}^{(k)}|,|a_{k,j}^{(k)}| \le (1+\varepsilon_k) |a_{k,k}^{(k)}| \text{ for all } i,j \ge k \}.$$
We have the following proposition (in the spirit of Lemma \ref{lm:cp_eps_stable}).

\begin{proposition}\label{lm:rp_stab}
For every $A \in \mathbf{RP}^{\bm{\varepsilon}}_n(S)$, where $S$ equals $\mathbb{R}$ or $\mathbb{C}$ and $\varepsilon_i \ge 0$ for $i =1,...,n-1$, there exists a matrix $B \in \mathbf{RP}_n(S)$ such that
$$a^{(k)}_{n,n} = b^{(k)}_{n,n} \qquad \text{for} \quad k = 1,...,n,$$ 
and 
$$ \big| a^{(k)}_{i,j} - b^{(k)}_{i,j} \big| \le (2  + \varepsilon_\ell)\varepsilon_\ell \, \big|a_{\ell,\ell}^{(\ell)}\big|, \quad \ell := \min\{i,j\},$$
for all $i,j = k,...,n$, $k = 1,...,n-1$.
\end{proposition}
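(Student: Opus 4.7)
The plan is to closely mirror the construction in the proof of Lemma \ref{lm:cp_eps_stable}, scaling the pivot row and column at each step by $\sqrt{1+\gamma_k}$ and the pivot itself by $1+\gamma_k$, while exploiting the fact that rook pivoting constrains only the pivot row and column, not the entire lower-right submatrix. This simplification lets the scaling factors $\gamma_k$ be chosen purely locally at each step, with no dependence on $\gamma_{k+1},\ldots,\gamma_{n-1}$.

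Concretely, I would set $B^{(n)} := A^{(n)}$ and, for $k = n-1$ down to $1$, define
$$B^{(k)} := \begin{pmatrix} (1+\gamma_k)\,a^{(k)}_{k,k} & \sqrt{1+\gamma_k}\,A^{(k)}_{k,k+1:n} \\ \sqrt{1+\gamma_k}\,A^{(k)}_{k+1:n,k} & A^{(k)}_{k+1:n,k+1:n} + B^{(k+1)} - A^{(k+1)} \end{pmatrix},$$
with $\gamma_k := \varepsilon_k(2+\varepsilon_k)$. This value of $\gamma_k$ is forced by the rook condition at step $k$: one needs $|b^{(k)}_{i,k}| = \sqrt{1+\gamma_k}\,|a^{(k)}_{i,k}| \le (1+\gamma_k)\,|a^{(k)}_{k,k}| = |b^{(k)}_{k,k}|$, and since $|a^{(k)}_{i,k}| \le (1+\varepsilon_k)\,|a^{(k)}_{k,k}|$, it suffices that $(1+\varepsilon_k)^2 \le 1+\gamma_k$; the same bound holds for $|b^{(k)}_{k,j}|$ by symmetry.

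Next I would verify that the $B^{(k)}$ are consistent Gaussian elimination iterates: direct substitution shows $B^{(k)}_{k+1:n,k+1:n} - B^{(k)}_{k+1:n,k}\,B^{(k)}_{k,k+1:n}/b^{(k)}_{k,k} = B^{(k+1)}$, because the two factors of $\sqrt{1+\gamma_k}$ cancel against the $1+\gamma_k$ in the denominator. Combined with the rook bounds on the pivot row and column, this confirms $B \in \mathbf{RP}_n(S)$. Moreover, the identity $b^{(k)}_{i,j} - a^{(k)}_{i,j} = b^{(k+1)}_{i,j} - a^{(k+1)}_{i,j}$ holds for all $i,j > k$, and telescoping down to $\ell := \min\{i,j\}$ gives $b^{(k)}_{i,j} - a^{(k)}_{i,j} = b^{(\ell)}_{i,j} - a^{(\ell)}_{i,j}$; specializing $i=j=n$ and running the telescope all the way to level $n$ immediately yields $a^{(k)}_{n,n} = b^{(k)}_{n,n}$ for every $k$.

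All that remains is the entrywise bound at level $\ell$. If $i=j=\ell$, the error is exactly $\gamma_\ell\,|a^{(\ell)}_{\ell,\ell}| = \varepsilon_\ell(2+\varepsilon_\ell)\,|a^{(\ell)}_{\ell,\ell}|$; if exactly one of $i,j$ equals $\ell$, the error is $(\sqrt{1+\gamma_\ell}-1)\,|a^{(\ell)}_{i,j}| \le \varepsilon_\ell(1+\varepsilon_\ell)\,|a^{(\ell)}_{\ell,\ell}|$, which is again dominated by $\varepsilon_\ell(2+\varepsilon_\ell)\,|a^{(\ell)}_{\ell,\ell}|$. The single conceptual point worth pausing on --- and the only real obstacle --- is recognizing that the $\gamma_k$ truly decouple across levels here, in sharp contrast to Lemma \ref{lm:cp_eps_stable}, where $\gamma_k$ had to absorb a maximum over later $\gamma_{k+1},\ldots,\gamma_{n-1}$ in order to control off-diagonal entries $|a^{(k)}_{i,j}|$ with $i,j > k$. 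Rook pivoting places no constraint on those entries, which is precisely why the right-hand side here depends on only the single index $\ell$ rather than a maximum or sum.
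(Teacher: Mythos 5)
Your proof is correct and takes essentially the same approach as the paper: the paper uses the identical construction, writing the scaling factors directly as $(1+\varepsilon_k)^2$ on the pivot and $(1+\varepsilon_k)$ on the pivot row and column, which is exactly your $1+\gamma_k$ and $\sqrt{1+\gamma_k}$ since $\gamma_k = \varepsilon_k(2+\varepsilon_k)$ gives $1+\gamma_k=(1+\varepsilon_k)^2$. You simply spell out the verification steps the paper leaves implicit.
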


\begin{proof}
Given $A \in \mathbf{RP}_n^{\bm{\varepsilon}}(S)$, the result follows immediately from the construction $B^{(n)} := A^{(n)}$ and 
$$B^{(k)} := \begin{pmatrix} (1+\varepsilon_k)^2 \, a^{(k)}_{k,k} & (1+\varepsilon_k) \, A^{(k)}_{k,k+1:n} \\
 (1+\varepsilon_k) \, A^{(k)}_{k+1:n,k}  & A^{(k)}_{k+1:n,k+1:n} + B^{(k+1)} - A^{(k+1)}
\end{pmatrix}$$
for $k = 1,...,n-1$. 
\end{proof}

Similar to Lemma \ref{lm:cp_eps_stable}, the construction of $B \in \mathbf{RP}_n(S)$ is algorithmic in nature, and this procedure (a variant of Algorithm \ref{alg:eps_stable}) converts inexact numerically computed instances of large growth into provable lower bounds. In particular, through the combination of numerical computation and an algorithmic implementation of the procedure of Lemma \ref{lm:rp_stab}, we have the following lower bound (see Subsection \ref{sec:comp} and 
\href{https://github.com/alanedelman/CompletePivotingGrowth}{our repository} \cite{ourrepo}
).

\begin{proposition}\label{prop:rook_lowerbound}
$g\big[\mathbf{RP}_{48}(\mathbb{R})\big] > 640.4861$.
\end{proposition}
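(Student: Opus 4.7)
The plan is to establish this proposition by a computer-assisted argument that mirrors the methodology already developed in Subsection \ref{sec:comp} for complete pivoting, but adapted to rook pivoting. First I would modify the JuMP/IPOPT optimization pipeline used for $g\big[\mathbf{CP}_n(\mathbb{R})\big]$ so that the nonlinear constraints enforce only the rook conditions $|x_{i,k,k}|, |x_{k,j,k}| \le |x_{k,k,k}|$ (rather than the full complete-pivoting inequalities $|x_{i,j,k}| \le |x_{k,k,k}|$), while retaining the Gaussian elimination update relations as equality constraints on the three-dimensional variable array. The objective remains $|x_{n,n,n}|/|x_{1,1,1}|$, normalized by setting $x_{1,1,1} = 1$.

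Next I would run this modified program in parallel over a large number of independent random initializations at $n = 48$, seeding each thread with a random standard normal matrix that has been row- and column-permuted to be rook pivoted, and then propagating its Gaussian elimination pyramid to fill the full variable array. The winner across threads yields a floating point candidate matrix $\tilde A$ whose numerical growth factor exceeds $640.4861$ with some margin. Because IPOPT operates in floating point, $\tilde A$ will typically violate the rook constraints slightly, i.e.\ $\tilde A \in \mathbf{RP}^{\bm{\varepsilon}}_{48}(\mathbb{R})$ for a small but nonzero $\bm{\varepsilon}$, so the output is not yet a rigorous lower bound.

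To convert this into a mathematically valid statement, I would round $\tilde A$ to a rational matrix and then apply the constructive algorithm underlying Proposition~\ref{lm:rp_stab} (the rook analogue of Algorithm~\ref{alg:eps_stable}). Concretely, starting from $B^{(n)} := A^{(n)}$ and walking backwards, one scales the pivot row and column at each step by $(1+\varepsilon_k)$ and absorbs the correction into the top block, as in the proof of Proposition~\ref{lm:rp_stab}. Carrying this out in exact rational arithmetic produces a matrix $B \in \mathbf{RP}_{48}(\mathbb{R})$ with $b^{(k)}_{n,n} = a^{(k)}_{n,n}$ and controlled perturbations elsewhere, so that $g(B) = |a^{(n)}_{n,n}|/|b_{1,1}|$ can be read off and compared against $640.4861$.

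The main obstacle is purely computational rather than conceptual: the nonlinear optimization problem at $n=48$ has roughly $48^3/3 \approx 3\cdot 10^4$ variables and a highly non-convex landscape, so reaching a local maximum with growth above $640.4861$ demands both a sizeable bank of random starts and sufficient wall-clock time. A secondary concern is that the scaling factors $(1+\varepsilon_k)$ in the stability construction inflate $|b_{1,1}|$ relative to $|a_{1,1}|$, so one must verify that the optimizer returned a solution with $\max_k \varepsilon_k$ small enough (e.g.\ $\ll 10^{-4}$) that the certified rational growth factor still clears the target threshold. Provided these two practical issues are handled, the argument is complete, and the explicit certifying matrix $B$ together with its exact-arithmetic Gaussian elimination pyramid can be deposited in the accompanying repository \cite{ourrepo} for independent verification.
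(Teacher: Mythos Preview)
Your proposal is correct and follows essentially the same approach as the paper: numerical optimization (the rook-pivoting variant of the JuMP/IPOPT pipeline from Subsection~\ref{sec:comp}) to produce a floating-point candidate, followed by the exact-arithmetic stability construction of Proposition~\ref{lm:rp_stab} (the rook analogue of Algorithm~\ref{alg:eps_stable}) to certify a matrix $B\in\mathbf{RP}_{48}(\mathbb{R})$ with growth exceeding $640.4861$, with the certifying data deposited in the repository. The paper does not supply any additional argument beyond what you outline.
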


Next, we prove the following extrapolation lemma, from which lower bounds for rook pivoting immediately follow.

\begin{lemma}\label{prop:extrap_rp}
Let $S$ equal $\mathbb{R}$ or $\mathbb{C}$. Then
\begin{enumerate}[(i)]
    \item $g\big[\mathbf{RP}_n(S)\big]$ is non-decreasing,
    \item $g\big[\mathbf{RP}_{m\, n}(S)\big] \ge  g\big[\mathbf{RP}_{m}(S)\big] \times g\big[\mathbf{RP}_{n}(S)\big]$ for all $m,n \in \mathbb{N}$,
    \item if $g\big[\mathbf{RP}_k(S)\big] \ge k^\alpha$ for some $k$, then $g\big[\mathbf{RP}_n(S)\big] \ge k^{-\alpha}n^\alpha$ for all $n \in \mathbb{N}$.
\end{enumerate}
\end{lemma}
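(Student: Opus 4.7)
My plan is to follow the same three-part template as the author's earlier Lemma~\ref{lm:cp_extrap}, but with the Kronecker product replacing the Hadamard tensor construction that was used there.

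For property $(i)$, I would use the standard bordering trick: given $A\in\mathbf{RP}_n(S)$ with $\max_{i,j}|a_{i,j}|=1$, form
\[
\widetilde A \;=\; \begin{pmatrix} 1 & 0_n^T \\ 0_n & A \end{pmatrix} \;\in\; \mathbf{RP}_{n+1}(S),
\]
which is rook-pivoted (the first row and column of $\widetilde A$ are bounded by the pivot~$1$), and whose iterates from step $2$ onward coincide with those of $A$. Thus $g(\widetilde A)\ge g(A)$, which gives the monotonicity.

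For property $(ii)$, the plan is to show the set of rook-pivoted matrices is closed under Kronecker products, with growth multiplying under the tensor. Take $A\in\mathbf{RP}_m(S)$ and $B\in\mathbf{RP}_n(S)$ attaining the two maxima, and consider $A\otimes B\in S^{mn\times mn}$ with GE performed in the natural index order $p=(i{-}1)n+k$, $1\le k\le n$, $1\le i\le m$. The key structural fact is the Kronecker LU identity: for any $i$, the Schur complement of $A\otimes B$ after $(i{-}1)n$ steps equals $A^{(i)}_{i:m,i:m}\otimes B$; and within each block, after $k$ further steps, the leading $(n{-}k)\times(n{-}k)$ subblock is $a_{ii}^{(i)}B^{(k+1)}_{k+1:n,k+1:n}$, the off-diagonal block-row/column entries have the form $a_{i,i+r-1}^{(i)}\,b_{k+1,q'}^{(k+1)}$ (or vanish when $q'\le k$), and the trailing $A$-blocks carry a correction that does not affect what sits in the pivot's row and column. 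Combining these, the $(p,p)$ pivot is $a_{ii}^{(i)}b_{kk}^{(k)}$, and each entry in its row or column is bounded by $|a_{ii}^{(i)}|\cdot|b_{kk}^{(k)}|$ by applying the rook condition of $A$ to the $a$-factor and the rook condition of $B$ to the $b$-factor. Hence $A\otimes B\in\mathbf{RP}_{mn}(S)$, and since the final pivot is $a_{mm}^{(m)}b_{nn}^{(n)}$ and the denominator $\max_{i,j,i',j'}|a_{i,j}b_{i',j'}|$ factors, we obtain $g(A\otimes B)\ge g(A)\,g(B)$.

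For property $(iii)$, iterate $(ii)$: for any $j\ge 0$,
\[
g\big[\mathbf{RP}_{k^j}(S)\big]\;\ge\;g\big[\mathbf{RP}_k(S)\big]^j\;\ge\;k^{j\alpha}.
\]
Given $n$, choose $j$ with $k^j\le n<k^{j+1}$; monotonicity $(i)$ gives $g[\mathbf{RP}_n(S)]\ge k^{j\alpha}$, and the bound $k^j>n/k$ yields $k^{j\alpha}>k^{-\alpha}n^\alpha$, which is the desired estimate.

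The only genuinely delicate step is verifying the rook-pivoting condition inside a partially processed block in $(ii)$: once one starts eliminating within a single $B$-copy, the trailing matrix is no longer a pure Kronecker product and carries off-diagonal corrections. The cleanest way to handle this, which I would use, is to identify the Schur complement of the first $k$ rows/columns of $A^{(i)}_{i:m,i:m}\otimes B$ blockwise, observing that in the rows/columns containing the next pivot the corrections cancel and only Kronecker-like terms $a_{i,i+r-1}^{(i)}b^{(k+1)}_{k+1,q'}$ survive---this is where the rook conditions of $A$ and $B$ combine multiplicatively to bound the pivot's row and column.
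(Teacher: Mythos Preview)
Your proposal is correct and follows essentially the same approach as the paper: property $(i)$ via bordering, property $(ii)$ via closure of $\mathbf{RP}$ under Kronecker products (with pivots multiplying), and property $(iii)$ by iterating $(ii)$ and applying $(i)$. The only presentational difference is that the paper writes out the full block structure of $C^{(k)}$ for $k=1,\dots,n$ explicitly---showing the off-diagonal blocks take the form $a_{i,j}^{(2)}B+(a_{i,j}-a_{i,j}^{(2)})B_f^{(k)}$ and verifying directly that after $n$ steps the correction term vanishes so that $C^{(n+1)}=A^{(2)}\otimes B$---whereas you describe the same structure more schematically; both arguments rely on the same observation that the corrections live strictly outside the pivot's row and column.
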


\begin{proof}
Property $(i)$ follows from the construction $\begin{pmatrix} 1 & 0_n^T \\ 0_n & A \end{pmatrix}$. Property $(ii)$ follows from the fact that if $A \in \mathbf{RP}_{m}(S)$, $B \in \mathbf{RP}_{n}(S)$, and $S$ is closed under addition and multiplication, then $A \otimes B \in \mathbf{RP}_{m \, n}(S)$, where $\otimes$ is the matrix Kronecker product, which we now prove. Let $C = A \otimes B$, and, for the sake of space, define the following three auxilary matrices, consisting of $B^{(k)}$ for some $k = 2,...,n$ and some zeros:
$$B_r^{(k)} = \begin{pmatrix} 0_{n-k+1,k-1} & B^{(k)} \end{pmatrix}, \quad B_c^{(k)} = \begin{pmatrix} 0_{k-1,n-k+1} \\ B^{(k)} \end{pmatrix}, \quad B_f^{(k)} = \begin{pmatrix} 0_{k-1,k-1} & 0_{k-1,n-k+1} \\ 0_{n-k+1,k-1} & B^{(k)} \end{pmatrix},$$
so that $B_r^{(k)} \in S^{n \times (n-k+1)}$, $B_c^{(k)} \in S^{(n-k+1) \times n}$, and $B_f^{(k)} \in S^{n \times n}$. It suffices to complete $n$ steps of Gaussian elimination, show that at each step the rook pivoting condition holds ($|c_{k,k}^{(k)}| \ge |c_{i,k}^{(k)}|,|c_{k,j}^{(k)}|$ for $k = 1,...,n$), and note that $C^{(n+1)} = A^{(2)} \otimes B$. Initially, we have
$$C^{(1)}= A \otimes B = \begin{pmatrix} a_{1,1}B & \cdots & a_{1,m} B \\ \vdots & \ddots & \vdots \\ a_{m,1} B & \cdots & a_{m,m} B \end{pmatrix},$$
and the rook pivoting condition holds initially for any Kronecker product $A \otimes B$ of rook pivoted matrices $A$ and $B$, as
$$|a_{1,1} b_{1,1}| = |a_{1,1}| \, |b_{1,1}| \ge \max_{i,j = 1,...,m}\{|a_{i,1}|,|a_{1,j}|\} \; \max_{i,j = 1,...,n}\{|b_{i,1}|,|b_{1,j}|\}.$$
On the $k^{th}$ step of Gaussian elimination, we have
$$C^{(k)} = \begin{pmatrix} a_{1,1} B^{(k)} & a_{1,2} B_r^{(k)} & \cdots & a_{1,m}B_r^{(k)} \\ a_{2,1} B_c^{(k)} & a_{2,2}^{(2)} B + (a_{2,2}-a_{2,2}^{(2)}) B_f^{(k)} & \cdots & a_{2,m}^{(2)} B + (a_{2,m}-a_{2,m}^{(2)}) B_f^{(k)} \\ \vdots & \vdots & \ddots & \vdots \\ a_{m,1} B_c^{(k)} & a_{m,2}^{(2)} B + (a_{m,2}-a_{m,2}^{(2)}) B_f^{(k)} & \cdots & a_{m,m}^{(2)} B + (a_{m,m}-a_{m,m}^{(2)}) B_f^{(k)}\end{pmatrix},$$
and still the rook pivoting condition holds, as both $A$ and $B^{(k)}$ are rook pivoted. Finally, after the $n^{th}$ step, we note that the remainder term $(a_{i,j}-a_{i,j}^{(2)}) B_f^{(n)}$ disappears,
as
$$(a_{i,j}-a_{i,j}^{(2)}) B_f^{(n)} - \frac{a_{i,1} a_{1,j}}{a_{1,1}} \frac{B_c^{(n)} \, B_r^{(n)}}{b_{n,n}^{(n)}} = 0_{n \times n},$$
and so $C^{(n+1)} = A^{(2)} \otimes B$.

Property $(iii)$ follows quickly from Properties $(i)$ and $(ii)$. Let $n > k$ (if $n \le k$, the result trivially holds), and let $\ell \in \mathbb{N}$ be the largest number such that $k^\ell \le n$. We have
$$g\big[\mathbf{RP}_{n}(S)\big] \ge g\big[\mathbf{RP}_{k^\ell}(S)\big] \ge k^{\alpha \ell} =  \big[k^{\ell}/n \big]^{\alpha} \, n^{\alpha} \ge k^{-\alpha} n^{\alpha}.$$
\end{proof}

Using Proposition \ref{prop:rook_lowerbound} and Lemma \ref{prop:extrap_rp}, we obtain our desired lower bound.

\begin{theorem}[Restatement of Theorem \ref{thm:rp_growth}]
$g\big[\mathbf{RP}_{n}(\mathbb{R})\big] > \frac{1}{641}n^{1.669}$ for all $n \in \mathbb{N}$.
\end{theorem}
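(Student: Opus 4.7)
The plan is to combine the two ingredients that have already been set up: the numerically verified lower bound at a single size (Proposition \ref{prop:rook_lowerbound}) and the tensorial extrapolation result (Lemma \ref{prop:extrap_rp}(iii)). Everything else is essentially a matter of fitting the numerical values.

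First I would choose the exponent $\alpha = 1.669$ and check the arithmetic inequality $48^{\alpha} < 640.4861$. This is just a one-line numerical verification: $\alpha \log 48 < \log 640.4861$, which holds since $\log_{10} 640.4861 / \log_{10} 48 \approx 1.6693$, so $\alpha = 1.669$ lies just below this ratio. Combined with Proposition \ref{prop:rook_lowerbound}, this yields
\begin{equation*}
g\big[\mathbf{RP}_{48}(\mathbb{R})\big] > 640.4861 > 48^{1.669}.
\end{equation*}

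Next I would invoke Lemma \ref{prop:extrap_rp}(iii), with $k=48$ and $\alpha=1.669$, to conclude
\begin{equation*}
g\big[\mathbf{RP}_{n}(\mathbb{R})\big] \ge 48^{-1.669}\, n^{1.669} \quad \text{for all } n\in\mathbb{N}.
\end{equation*}
Finally, since $48^{1.669} < 640.4861 < 641$, we have $48^{-1.669} > 1/641$, giving $g\big[\mathbf{RP}_{n}(\mathbb{R})\big] > n^{1.669}/641$.

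There is no real obstacle at this stage, since the hard lifting was done in Proposition \ref{prop:rook_lowerbound} (which needed Proposition \ref{lm:rp_stab} together with the JuMP search) and Lemma \ref{prop:extrap_rp}(ii) (the Kronecker-product closure of $\mathbf{RP}_n$). The only subtle point worth being careful about is making sure that the exponent $\alpha$ chosen is genuinely $\le \log(640.4861)/\log(48)$, so that the hypothesis of Lemma \ref{prop:extrap_rp}(iii) is satisfied with the integer constant $1/641$ appearing cleanly in the final bound; the value $1.669$ has been tuned precisely so that both $48^{1.669} \le 640.4861$ and $48^{1.669} < 641$ hold simultaneously.
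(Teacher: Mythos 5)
Your proposal is correct and matches the paper's (terse) proof, which simply states that the result follows from Proposition \ref{prop:rook_lowerbound} and Lemma \ref{prop:extrap_rp}; you have accurately reconstructed the required arithmetic, choosing $\alpha = 1.669$ so that $48^{\alpha} < 640.4861 < 641$, applying Lemma \ref{prop:extrap_rp}(iii) with $k = 48$, and noting that the $n \le 48$ cases hold trivially since $g \ge 1 > 48^{-\alpha} n^{\alpha}$ there.
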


\section*{Acknowledgements}

\setstretch{.4}
\begin{onehalfspacing}

{\tiny  This material is based upon work supported by the Institute for Advanced Study and the National Science Foundation under Grant No. DMS-1926686. The authors acknowledge the MIT SuperCloud and Lincoln Laboratory Supercomputing Center for providing HPC resources that have contributed to the research results reported within this paper. This material is based upon work supported by the National Science Foundation under grant no. OAC-1835443, grant no. SII-2029670, grant no. ECCS-2029670, grant no. OAC-2103804, and grant no. PHY-2021825. We also gratefully acknowledge the U.S. Agency for International Development through Penn State for grant no. S002283-USAID. The information, data, or work presented herein was funded in part by the Advanced Research Projects Agency-Energy (ARPA-E), U.S. Department of Energy, under Award Number DE-AR0001211 and DE-AR0001222. The views and opinions of authors expressed herein do not necessarily state or reflect those of the United States Government or any agency thereof. This material was supported by The Research Council of Norway and Equinor ASA through Research Council project ``308817 - Digital wells for optimal production and drainage''. Research was sponsored by the United States Air Force Research Laboratory and the United States Air Force Artificial Intelligence Accelerator and was accomplished under Cooperative Agreement Number FA8750-19-2-1000. The views and conclusions contained in this document are those of the authors and should not be interpreted as representing the official policies, either expressed or implied, of the United States Air Force or the U.S. Government. The U.S. Government is authorized to reproduce and distribute reprints for Government purposes notwithstanding any copyright notation herein. We further thank Juan-Pablo Vielma, for very helpful discussions. In addition, we would like to thank Nick Higham for useful comments on an earlier draft and are deeply saddened by his passing.}
\end{onehalfspacing}

\nocite{*}
{ \small 
	\bibliographystyle{plain}
	\bibliography{main} }

\begin{thebibliography}{10}

\bibitem{bezanson2017julia}
Jeff Bezanson, Alan Edelman, Stefan Karpinski, and Viral~B Shah.
\newblock Julia: A fresh approach to numerical computing.
\newblock {\em SIAM review}, 59(1):65--98, 2017.

\bibitem{cohen1974note}
AM~Cohen.
\newblock A note on pivot size in {G}aussian elimination.
\newblock {\em Linear Algebra and its Applications}, 8(4):361--368, 1974.

\bibitem{LANCELOT}
A.R. Conn, Nick Gould, and Ph.L. Toint.
\newblock {\em LANCELOT: a Fortran package for large-scale nonlinear optimization}.
\newblock Springer Series in Computational Mathematics, vol. 17. Springer Verlag (Heidelberg, New York), 1992.

\bibitem{cryer1968pivot}
Colin~W Cryer.
\newblock Pivot size in {G}aussian elimination.
\newblock {\em Numerische Mathematik}, 12(4):335--345, 1968.

\bibitem{day1988growth}
Jane Day and Brian Peterson.
\newblock Growth in {G}aussian elimination.
\newblock {\em The American mathematical monthly}, 95(6):489--513, 1988.

\bibitem{DunningHuchetteLubin2017}
Iain Dunning, Joey Huchette, and Miles Lubin.
\newblock {JuMP}: A modeling language for mathematical optimization.
\newblock {\em SIAM Review}, 59(2):295--320, 2017.

\bibitem{edelman1992complete}
Alan Edelman.
\newblock The complete pivoting conjecture for {G}aussian elimination is false.
\newblock {\em The Mathematica Journal}, 2(2):58--61, 1992.

\bibitem{edelman1998counterexample}
Alan Edelman and David Friedman.
\newblock A counterexample to a {H}adamard matrix pivot conjecture.
\newblock {\em Linear and Multilinear Algebra}, 44(1):53--56, 1998.

\bibitem{edelman1995complete}
Alan Edelman and Walters Mascarenhas.
\newblock On the complete pivoting conjecture for a {H}adamard matrix of order 12.
\newblock {\em Linear and Multilinear Algebra}, 38(3):181--187, 1995.

\bibitem{ourrepo}
Alan Edelman and John Urschel.
\newblock Git{H}ub repository associated with paper.
\newblock \url{https://github.com/alanedelman/CompletePivotingGrowth}.

\bibitem{Foster:1994:GEP}
Leslie~V. Foster.
\newblock Gaussian elimination with partial pivoting can fail in practice.
\newblock {\em SIMAX}, 15(4):1354--1362, 1994.

\bibitem{foster1997growth}
Leslie~V Foster.
\newblock The growth factor and efficiency of {G}aussian elimination with rook pivoting.
\newblock {\em Journal of Computational and Applied Mathematics}, 86(1):177--194, 1997.

\bibitem{NPSOL}
P.~E. Gill, W.~Murray, M.~A. Saunders, and M.~H. Wright.
\newblock User’s guide for {SOL}/{NPSOL}: A {F}ortran package for nonlinear programming.
\newblock Technical Report Tech. Report SOL 83-12, Stanford University Systems Optimization Laboratory, Stanford, CA, 1983.

\bibitem{gould1991private}
Nick Gould.
\newblock personal communication, 1991.

\bibitem{gould1991growth}
Nick Gould.
\newblock On growth in {G}aussian elimination with complete pivoting.
\newblock {\em SIAM Journal on Matrix Analysis and Applications}, 12(2):354--361, 1991.

\bibitem{higham2021random}
Desmond~J Higham, Nicholas~J Higham, and Srikara Pranesh.
\newblock Random matrices generating large growth in {LU} factorization with pivoting.
\newblock {\em SIAM Journal on Matrix Analysis and Applications}, 42(1):185--201, 2021.

\bibitem{higham2002accuracy}
Nicholas~J Higham.
\newblock {\em Accuracy and stability of numerical algorithms}.
\newblock SIAM, 2002.

\bibitem{higham1989large}
Nicholas~J Higham and Desmond~J Higham.
\newblock Large growth factors in {G}aussian elimination with pivoting.
\newblock {\em SIAM Journal on Matrix Analysis and Applications}, 10(2):155--164, 1989.

\bibitem{huang2022average}
Han Huang and Konstantin Tikhomirov.
\newblock Average-case analysis of the {G}aussian elimination with partial pivoting.
\newblock {\em arXiv preprint arXiv:2206.01726}, 2022.

\bibitem{isaacson2012analysis}
Eugene Isaacson and Herbert~Bishop Keller.
\newblock {\em Analysis of numerical methods}.
\newblock Courier Corporation, 2012.

\bibitem{kravvaritis2013complete}
Christos Kravvaritis and Marilena Mitrouli.
\newblock On the complete pivoting conjecture for {H}adamard matrices: further progress and a good pivots property.
\newblock {\em Numerical Algorithms}, 62:571--582, 2013.

\bibitem{parker1995random}
Douglass~Stott Parker.
\newblock {\em Random butterfly transformations with applications in computational linear algebra}.
\newblock UCLA Computer Science Department, 1995.

\bibitem{peca2022growth}
John Peca-Medlin and Thomas Trogdon.
\newblock Growth factors of random butterfly matrices and the stability of avoiding pivoting.
\newblock {\em arXiv preprint arXiv:2203.15921}, 2022.

\bibitem{poole2000rook}
George Poole and Larry Neal.
\newblock The rook's pivoting strategy.
\newblock {\em Journal of Computational and Applied Mathematics}, 123(1-2):353--369, 2000.

\bibitem{sankar2004smoothed}
Arvind Sankar.
\newblock {\em Smoothed analysis of Gaussian elimination}.
\newblock PhD thesis, Massachusetts Institute of Technology, 2004.

\bibitem{sankar2006smoothed}
Arvind Sankar, Daniel~A Spielman, and Shang-Hua Teng.
\newblock Smoothed analysis of the condition numbers and growth factors of matrices.
\newblock {\em SIAM Journal on Matrix Analysis and Applications}, 28(2):446--476, 2006.

\bibitem{seberry}
Jennifer Seberry.
\newblock Google scholar search.
\newblock \url{https://scholar.google.com/scholar?hl=en&as_sdt=0\%2C47&q=jennifer+seberry+pivoting&btnG}.
\newblock Accessed: 2023-03-05.

\bibitem{tornheim1964maximum}
Leonard Tornheim.
\newblock Pivot size in {G}auss reduction.
\newblock {\em Tech Paper, Chevron Research Co., Richmond CA}, 1964.

\bibitem{tornheim1965maximum}
Leonard Tornheim.
\newblock Maximum third pivot for {G}aussian reduction.
\newblock In {\em Tech. Report}. Calif. Res. Corp Richmond, Calif, 1965.

\bibitem{tornheim1969maximum}
Leonard Tornheim.
\newblock A bound for the fifth pivot in {G}aussian elimination.
\newblock {\em Tech Report, Chevron Research Co., Richmond CA}, 1969.

\bibitem{tornheim1970maximum}
Leonard Tornheim.
\newblock Maximum pivot size in {G}aussian elimination with complete pivoting.
\newblock {\em Tech Report, Chevron Research Co., Richmond CA}, 10, 1970.

\bibitem{townsend2016gaussian}
Alex Townsend.
\newblock Gaussian elimination corrects pivoting mistakes.
\newblock {\em arXiv preprint arXiv:1602.06602}, 2016.

\bibitem{townsend2013gaussian}
Alex Townsend and Lloyd~N Trefethen.
\newblock Gaussian elimination as an iterative algorithm.
\newblock {\em SIAM News}, 46, 2013.

\bibitem{trefethen1990average}
Lloyd~N Trefethen and Robert~S Schreiber.
\newblock Average-case stability of {G}aussian elimination.
\newblock {\em SIAM Journal on Matrix Analysis and Applications}, 11(3):335--360, 1990.

\bibitem{wilkinson1961error}
James~Hardy Wilkinson.
\newblock Error analysis of direct methods of matrix inversion.
\newblock {\em Journal of the ACM (JACM)}, 8(3):281--330, 1961.

\bibitem{Wilkinson1965AEP}
J.H. Wilkinson.
\newblock {\em The Algebraic Eigenvalue Problem}.
\newblock Clarendon Press, Oxford, 1965.

\end{thebibliography}

\end{document}